 \renewcommand*\backref[1]{\ifx#1\relax \else (cited on #1) \fi}
\newcommand{\qed}{\tag*{$\blacksquare$}}
\newcommand{\R}{{\rm I\!R}}
\newcommand{\prox}[1]{\hbox{prox}_{#1}}
\newcommand{\proj}[1]{\hbox{proj}_{#1}}
\newcommand{\argmin}[1]{\mathop{\hbox{arg\,min}}_{#1}}
\newcommand{\argmax}[1]{\mathop{\hbox{arg\,max}}_{#1}}
\newcommand{\interior}{\mathop{\hbox{int}}}
\newcommand{\B}{\mathcal{B}}
\def\half{\frac 1 2}
\newcommand{\dom}[1]{\hbox{{ dom\;}}{(#1)}}
\definecolor{blu}{rgb}{0,0,1}
\definecolor{blue}{rgb}{0,0,1}
\def\blu#1{{\color{black}#1}}
\def\blue#1{{\color{black}#1}}
\newtheorem{assumption}{Assumption}
\newcommand{\adaptive}{variable~}
\newcommand{\Adaptive}{Variable~}
\newcommand{\comm}[1]{\hfill \# #1}
\def\norm#1{\|#1\|}
\tikzset{
  fontscale/.style = {font=\relsize{#1}},
  level 1/.style={sibling distance=8mm,level distance=1cm},
  level 2/.style={sibling distance=8mm,level distance=1cm},
  level 3/.style={sibling distance=10mm,level distance=1cm},
  treenode/.style = {align=center, text centered,minimum size = 0.6cm},
  arn_n/.style = {treenode, font=\footnotesize,circle, black, draw=black,very thick},
  arn_r/.style = {treenode, font=\footnotesize,circle, red, draw=red, very thick},
  arn_x/.style = {treenode, font=\footnotesize,circle, dashed, black, draw=black, very thick},
}
\begin{document}

\title{Let's Make Block Coordinate Descent \blue{Converge} Faster:\\Faster Greedy Rules, Message-Passing,\\ Active-Set Complexity, and Superlinear Convergence}

\author{\name Julie Nutini \email julie.nutini@gmail.com\\ 
\addr University of British Columbia\\
\name Issam Laradji \email issamou@cs.ubc.ca\\ 
\addr University of British Columbia, ServiceNow Research\\
\name Mark Schmidt \email schmidtm@cs.ubc.ca\\ 
\addr University of British Columbia, Canada CIFAR AI Chair (Amii)
}

\editor{Ryota Tomioka}

\maketitle

\begin{abstract}
Block coordinate descent (BCD) methods are widely used for large-scale numerical optimization because of their cheap iteration costs, low memory requirements, amenability to parallelization, and ability to exploit problem structure. Three main algorithmic choices influence the performance of BCD methods: the block partitioning strategy, the block selection rule, and the block update rule. In this paper we explore all three of these  building blocks and propose variations for each that can \blue{significantly improve the progress made by each BCD iteration}. We (i) propose new greedy block-selection strategies that guarantee more progress per iteration than the Gauss-Southwell rule; (ii) explore practical issues like how to implement the new rules when using ``variable'' blocks; (iii)  explore the use of message-passing to  compute matrix or Newton updates efficiently on huge blocks for problems with sparse dependencies between variables; and (iv) consider optimal active manifold identification, which leads to bounds on the ``active-set complexity'' of BCD methods and leads to superlinear convergence for certain problems with sparse solutions (and in some cases finite termination at an optimal solution). We support all of our findings with numerical results for the classic machine learning problems of least squares, logistic regression, multi-class logistic regression, label propagation, and L1-regularization.
\end{abstract}

\begin{keywords}
  Convex optimization, convergence rates, block coordinate descent, greedy algorithms, large-scale
\end{keywords}


\section{Introduction}
\label{sec:introduction}

Block coordinate descent (BCD) methods have become one of the key tools for solving some of the most important large-scale optimization problems. This is due to their typical ease of implementation, low memory requirements, cheap iteration costs, adaptability to distributed settings, ability to use problem structure, and numerical performance. Notably, they have been used for almost two decades in the context of L1-regularized least squares (LASSO)~\citep{fu1998penalized,sardy2000} and support vector machines (SVMs)~\citep{libsvm, svmlight}. Indeed, randomized BCD methods have recently been used  to solve instances of these widely-used models with a billion variables~\citep{richtarik2011}, while for ``kernelized'' SVMs \emph{greedy} BCD methods remain among the state of the art methods~\citep{you2016asynchronous}. Due to the wide use of these models, any improvements on the convergence of BCD methods will affect a myriad of applications.

\blue{In recent work, we have given a refined analysis of greedy coordinate descent in the special case where we are updating a single variable~\citep{nutini2015}. This work supports the use of greedy coordinate descent methods over randomized methods in the many scenarios where the iterations costs of these methods are similar. Other authors have subsequently shown a variety of interesting results related to greedy coordinate descent algorithms~\citep{locatello2018matching,song2017accelerated,stich2017approximate,lu2018accelerating,karimireddy2019efficient,fang2020greed}. These works also focus on the case of updating a single coordinate, and do not explore the large design space of possible choices we could make when updating multiple coordinates in a block.\footnote{Some recent works have considered ways to split the variables into blocks, but then apply the single-coordinate greedy update rules (in some cases in parallel) to the blocks~\citep{you2016asynchronous,fang2021efficient}. Our focus is on updating the variables as a block to obtain more progress at each iteration.} An exception to this is~\citet{csiba2017global}, which was contemporaneous with the first version of this work and whose ``greedy minibatch'' strategy is obtained as special case of a greedy BCD rule we present in Section~\ref{sec:gsl}.} 

\blue{The work described in this paper is motivated by the plethora of choices available when implementing greedy BCD. In this work we survey a variety of choices that are available in the literature, present new choices for several problem structures, and present both theoretical and empirical evidence that some choices are superior to others. By considering the conclusions from this paper and a particular problem structure that a reader is interested in, we expect that readers should be able to develop substantially-faster BCD algorithms for many applications. Further, some of the lessons we learned from this work can also be used to give faster algorithms in other settings like cyclic and  randomized BCD.
}

While there are a variety of ways to implement a BCD method, the three main building blocks that affect its performance are:
\begin{enumerate}
	\item {\bf Blocking strategy}. We need to define a set of possible ``blocks'' of problem variables that we might choose to update at a particular iteration. Two common strategies are to form a partition of the coordinates into disjoint sets (we call this \emph{fixed} blocks) or to consider any possible subset of coordinates as a ``block'' (we call this \emph{variable} blocks). Typical choices include using a set of fixed blocks related to the problem structure, or using \adaptive blocks by randomly sampling a fixed number of coordinates.
	\item {\bf Block selection rule}. Given a set of possible blocks, we need a rule to select a block of corresponding variables to update. Classic choices include cyclically going through a fixed ordering of blocks, choosing random blocks, choosing the block with the largest gradient (the \emph{Gauss-Southwell} rule), or choosing the block that leads to the largest improvement.	
	\item {\bf Block update rule}. Given the block we have selected, we need to decide how to update the block of corresponding variables. Typical choices include performing a gradient descent iteration, computing the Newton direction and performing a line-search, or computing the optimal update of the block by subspace minimization.
\end{enumerate}

In Section~\ref{sec:bcd} we introduce our notation, review the standard choices behind BCD algorithms, and discuss problem structures where BCD is suitable.
Subsequently, the following sections explore a wide variety of ways to speed up BCD by modifying the three building blocks above. \blue{In particular, we first consider the implementation issues associated with using greedy BCD methods for general unconstrained optimization:}
\begin{itemize}
	\item In \Cref{sec:gsl} \blue{we propose selection rules that generalize the Gauss-Southwell-Lipschitz rule proposed in~\citet{nutini2015}. The Gauss-Southwell-Lipschitz rule incorporates knowledge of Lipschitz constants into the classic Gauss-Southwell rule in order to give a better bound on the progress made at each iteration. The prior work considered this rule in the setting of updating a single coordinate, and in this work we propose several generalizations to the block setting.} We also show a result characterizing the convergence rate obtained using the Gauss-Southwell rule as well as the new greedy rules, under both the Polyak-\L{}ojasiewicz inequality and for general (potentially non-convex) functions.
	\item In \Cref{sec:practical} we discuss a variety of the practical implementation issues associated with implementing BCD methods. This includes how to approximate the new rules in the variable-block setting, how to estimate the Lipschitz constants, how to efficiently implement line-searches, how the blocking strategy interacts with greedy rules, and why we should prefer Newton updates over the ``matrix updates'' of recent works.	\blue{While we specifically discuss these in the context of greedy methods, several of the ideas in this section also apply when implementing cyclic or randomized BCD methods.}
\end{itemize}
\blue{In the next sections we consider two of the most-common cases where BCD methods are used. In particular, we consider problems with a sparse dependency structure and problems that include a non-smooth but separable term. In this context we make the following contributions:}
\begin{itemize}
\item In \Cref{sec:messagepassing} we show how second-order updates, or the exact update for quadratic functions, can be computed in linear-time for problems with sparse dependencies when using ``forest-structured'' blocks. This allows us to use huge block sizes for problems with sparse dependencies, and uses a connection between sparse quadratic functions and Gaussian Markov random fields (GMRFs) by exploiting the ``message-passing'' algorithms developed for GMRFs. \blue{We discuss how to efficiently construct forest-structured blocks in both randomzied and greedy settings.}
	\item In \Cref{sec:exactupdates} \blue{we give bounds on the number of iterations required to reach the optimal manifold (``active-set complexity'') when applying greedy (or cyclic) BCD methods to problems with a separable non-smooth structure.} We also discuss how when using greedy rules with \adaptive blocks this leads to local superlinear convergence  for problems with sufficiently-sparse solutions (when we use updates incorporating second-order information). \blue{This leads to finite convergence for problems with sufficiently-sparse solutions and where exact updates are possible (such as SVMs and LASSO problems), when using greedy selection and sufficiently-large variable blocks. We further discuss how the old idea of two-metric projection can be used to obtain this finite convergence while maintaining the cost of BCD methods for smooth objectives.}
\end{itemize}
We note that many related ideas have been explored by others in the context of BCD methods and we will go into detail about these related works in the relevant sections. 
In \Cref{sec:experiments} we use a variety of problems arising in machine learning to evaluate the effects of these choices on BCD implementations. These experiments indicate that in some cases the performance improvement obtained by using these enhanced methods can be dramatic. 

 \blue{Below we list 7 hypotheses that our work supports regarding the implementation of BCD methods, along with the relevant theoretical and/or empirical results in this work. When implementing BCD methods, we recommend to consider each of these issues and to follow the guideline provided that it does not substantially increase the iteration cost.}
\blue{
\begin{enumerate}
\item Similar to the the case of updating a single coordinate, BCD methods tend to converge faster when using greedy rules than random or cyclic rules. Thus, we should prefer greedy methods in cases where their iteration costs are similar to those of cyclic/random methods. We discuss these classic block selection rules in Section~\ref{subsec:select} and we discuss problem structures where random and greedy methods can be efficiently implemented in Sections~\ref{sec:problems}-\ref{subsec:examples} (with a detailed example in~\ref{app:sparsesoftmax}). We give a bound showing why greedy methods may converge faster in Section~\ref{subsec:boundsprogress}, while empirical evidence for the advantage of greedy methods is seen in Sections~\ref{sec:expGrad},~\ref{exp:message}-\ref{sec:experimentsProx}, and \ref{append:greedyGradient}.
\item All BCD methods tend to converge faster as the block size increases. Thus, it is beneficial to use larger blocks in cases where this does not increase the iteration cost. The main reasons why it might not increase the cost are the use of parallel computation, or the presence of special problem structures that make updating particular blocks of variables have the same cost as updating a single variable (we give an example in \ref{app:sparsesoftmax}). Section~\ref{subsec:boundsprogress} gives a bound showing why increasing the block size tends to improves the progress per iteration, while empirical evidence for faster convergence as we increase the block size is seen in Sections~\ref{sec:expGrad}-\ref{exp:message},~\ref{append:greedyGradient}, and \ref{append:greedyNewtonMatrix}.
\item When using greedy selection rules, using variable blocks can substantially outperform using fixed blocks. When using randomized rules, we observed the opposite trend: we found that using fixed blocks tended to outperform variable blocks within randomized BCD methods. We discuss fixed and variable blocks in Section~\ref{subsec:variable}. Section~\ref{subsec:boundsprogress} discusses why variable blocks lead to a better progress bound on each iteration for greedy methods, and this is empirically supported by the experiments in Sections~\ref{sec:expGrad}-~\ref{sec:expNewt}, \ref{append:greedyGradient}, an \ref{append:greedyNewtonMatrix}
\item We found that using the new block versions of the Gauss-Southwell-Lipschitz rules can lead to substantially-faster convergence than the classic Gauss-Southwell rule. This gain was much larger than the modest improvements seen from the Gauss-Southwell-Lipschitz in prior work that only considered the single-coordinate case. On the other hand, we did not observe large differences between the various block generalizations of the Gauss-Southwell-Lipschitz rule, indicating that we can use the computationally-cheaper variants of the rule. These new Gauss-Southwell-Lipschitz rules are introduced in Sections~\ref{sec:BGSL}-\ref{subsec:fixed}, which discuss why these rules tend to make more progress per iteration than the classic GS rule. Formal convergence rates are given in Sections~\ref{subsec:convergence}-\ref{subsec:nonconvex}. We discuss how to efficiently implementing this type of rule in Section~\ref{subsec:adaptive} (while~\ref{append:blockLipschitz} discusses how to compute Lipschitz constants for some common problems). Empirical support for the advantages of the new rules is seen in Sections~\ref{sec:expGrad}-\ref{sec:expNewt}, \ref{append:greedyGradient}, and \ref{append:greedyNewtonMatrix}.\footnote{When using fixed blocks, we found that using Lipschitz constants to help partition the variables can lead to improved performance. In particular, we obtained the best performance by sorting the Lipschitz constants and constructing blocks based on the quantiles (so that the variables with the largest Lipschitz constants would be in the same group, and the variables with the smallest Lipschitz constants would in the same group). This often outperformed using a random assignment to the fixed groups, or choosing blocks that had small average Lipschitz constants. We discuss partition strategies in Section~\ref{sec:part}, and empirical support for this is seen in~\ref{append:greedyGradient} (though we do not have theoretical results explaining why this empirical difference was observed).}
\item We found that updates that scale the gradient by a matrix tend to outperform updates that use the gradient direction. Further, applying Newton's method to the block tended to outperform the fixed matrix scalings used in several recent works. We discuss matrix updates and Newton's method in Section~\ref{subsec:update}. Section~\ref{sec:BGSQ} shows why matrix updates tend to make more progress per iteration, with convergence rates given in Sections~\ref{subsec:convergence}-\ref{subsec:nonconvex}. The difference between using gradient updates and matrix updates can be observed by comparing the experiments in Section~\ref{sec:expGrad} to the experiments in Section~\ref{sec:expNewt}, and by comparing the experiments in \ref{append:greedyGradient} to the experiments in \ref{append:greedyNewtonMatrix}. 
Section~\ref{sec:newton} discusses why Newton updates should give a further improvement over matrix updates, and empirical evidence for the advantage of Newton updates is given in \ref{append:greedyNewtonMatrix}.
\item For problems with sparse dependencies, we found that using large forest-structured blocks can give a substantial peformance improvement over unstructured blocks (which must be much smaller to have a comparable cost). This applied whether we used random or greedy methods to construct the forest-structured blocks. Sections~\ref{sec:newtonCost}-\ref{subsec:solvingforestsBCD} discuss how to implement forest-structured updates in linear time, Sections~\ref{sec:redblack}-\ref{sec:treeApprox} and~\ref{app:forest} discuss how to efficiently construct forest-structured blocks, and Section~\ref{exp:message} shows experimentally that forest-structured blocks can lead to a huge performance gain.
\item For non-smooth problems with sparse solutions, the performance difference between random and greedy methods can be even larger than for smooth problems. This is because the greedy methods tend to focus on updating non-zero variables. Sections~\ref{subsec:mani}-\ref{sec:maniRate} show the speed at which greedy methods identify the set of non-zero variables, while Sections~\ref{sec:superlinear}-\ref{subsec:optQuad} discuss rules and updates that lead to superlinear convergence for certain non-smooth problems when using greedy rules with variable blocks. Section~\ref{sec:experimentsProx} shows experimentally the performance gains obtained by using variable blocks and rules/updates that give superlinar convergence, while~\ref{append:activeSetExp} shows the relative innefficiency of using randomized updates for this problem setting. Further, for obtaining superlinear convergence on non-smooth problems, the efficient two-metric projection strategies for updating the blocks has nearly the same convergence rate as the more-expensive projected-Newton updates. The two-metric projection method is discussed in Section~\ref{sec:tmp}, while Section~\ref{sec:experimentsProx} shows that the method has essentially-identical performance in practice to using more-expensive rules.
\end{enumerate}
We also found that using a line-search or local approximations of the Lipschitz constants led to faster convergence than using global upper bounds on these constants. This was particularly noticeable as we increased the block size. We discuss how to use local approximations of the Lipschitz constants in Section~\ref{sec:LA}, and how to efficiently implement line-searches for notable problem structures in Section~\ref{sec:line-search}. This is supported empirically in \ref{append:greedyGradient} and\ref{append:greedyNewtonMatrix}. However, when using approximations to Lipschitz constants we need to be careful about how it affects selection rules that depend on the Lipschitz constants (we observe a case where using approximated Lipshitz information within the selection hurts performance in \ref{append:greedyGradient}).
}

We summarize a set of recommendations that our experiments support in the table below:\\
{\scriptsize
\begin{center}
\begin{tabular}{|l|l|}
\hline
Premise & Recommendation\\
\hline
Random BCD has similar cost to gradient descent. & Do not use BCD.\\
Greedy BCD has similar cost random BCD. & Use greedy selection.\\
BCD with large blocks has similar cost to small blocks. & Use bigger blocks.\\
Using random BCD. & Use fixed blocks.\\
Using greedy BCD, variable blocks have similar cost to fixed blocks. & Use variable blocks.\\
Using greedy BCD, any GSL rule has similar cost to GS rule. & Use GSL.\\
Using greedy BCD wth fixed blocks, and have Lipschitz constants. & Partition blocks using sort strategy.\\
Matrix updates have similar cost to gradient updates. & Do not use gradient updates.\\
Newton updates have similar cost to matrix updates. & Use Newton updates.\\
Using matrix/Newton updates and dependencies are very sparse. & Use forest-structured blocks.\\
Using matrix/Newton updates but have non-differentiable regularizer. & Use two-metric projection.\\
Function is not quadratic. & Set the step-size with a line-search.\\
\hline
\end{tabular}
\end{center}
}

The source code and data files required to reproduce the experimental results of this paper can be downloaded from: \url{https://github.com/IssamLaradji/BlockCoordinateDescent}.




\section{Block Coordinate Descent Algorithms}
\label{sec:bcd}

We first consider the problem of minimizing a differentiable multivariate function,
\begin{equation}
\label{eq:generalobjective}
	\argmin{x \in \R^n} f(x).
\end{equation}
At iteration $k$ of a BCD algorithm, we first select a block $b_k \subseteq \{ 1, 2, \dots, n\}$ and then update the subvector $x_{b_k} \in \R^{|b_k|}$ corresponding to these variables,
\[
x^{k+1}_{b_k} = x^k_{b_k} + d^k.
\]
Coordinates of $x^{k+1}$ that are not included in $b_k$ are simply kept at their previous value. The vector $d^k \in \R^{|b_k|}$ is typically selected as a descent direction in the reduced dimensional subproblem,
\begin{equation}
d^k \in \argmin{d\in\R^{|b_k|}} f(x^k + U_{b_k}d),
\label{eq:subProb}
\end{equation}
where we construct $U_{b_k} \in \{ 0,1 \}^{n \times |b_k|}$ from the columns of the identity matrix corresponding to the coordinates in $b_k$. Using this notation, we have
\[
x_{b_k} = U_{b_k}^Tx,
\]
which allows us to write the BCD update of all $n$ variables in the form
\[
x^{k+1} = x^k + U_{b_k}d^k.
\]
There are many possible ways to define the block $b_k$ as well as the direction $d^k$. Typically we have a maximum block size $\tau$, which is chosen to be the largest number of variables that we can efficiently update at once. Given $\tau$ that divides evenly into $n$, consider a simple ordered fixed partition of the coordinates into a set $\B$ of $n/\tau$ blocks,
\[
	\B = \{ \{1, 2, \dots, \tau \}, \{ \tau + 1, \tau + 2, \dots, 2\tau \}, \dots, \{(n - \tau) + 1, (n - \tau) + 2, \dots, n \} \}.
\]
To select the block in $\B$ to update at each iteration we could simply repeatedly cycle through this set in order.
A simple choice of $d^k$ is the negative gradient corresponding to the coordinates in $b_k$, multiplied by a scalar step-size $\alpha_k$ that is sufficiently small to ensure that we decrease the objective function. This leads to a gradient update of the form
\begin{equation}
x^{k+1} = x^k - \alpha_k U_{b_k}\nabla_{b_k} f(x^k),
\label{eq:grad}
\end{equation}
where $\nabla_{b_k} f(x^k)$ are the elements of the gradient $\nabla f(x^k)$ corresponding to the coordinates in $b_k$.
While this gradient update and cyclic selection among an ordered fixed partition is simple, we can often drastically improve the performance using more clever choices. We highlight some common alternative choices in the next three subsections.

\subsection{Block Selection Rules}
\label{subsec:select}

Repeatedly going through a fixed partition of the coordinates is known as \emph{cyclic} selection~\citep{bertsekas1999}, and this is referred to as Gauss-Seidel when solving linear systems \citep{ortega1970}. The performance of cyclic selection may suffer if the order the blocks are cycled through is chosen poorly, but it has been shown that random permutations can fix a worst case for cyclic CD~\citep{lee2017}.
A variation on cyclic selection is ``essentially'' cyclic selection where each block must be selected at least every $m$ iterations for some fixed $m$ that is larger than the number of blocks~\citep{bertsekas1999}. Alternately, several authors have explored \emph{randomized} block selection
~\citep{nesterov2012efficiency,richtarik2011}. The simplest randomized selection rule is to select one of the blocks uniformly at random. However, several recent works show dramatic performance improvements over this naive random sampling  by incorporating knowledge of the Lipschitz continuity properties of the gradients of the blocks~\citep{nesterov2012efficiency,richtarik2013nsync,qu2015} or more recently by trying to estimate the optimal sampling distribution online~\citep{namkoong2017adaptive}.

An alternative to cyclic and random block selection is \emph{greedy} selection.  Greedy methods solve an optimization problem to select the ``best'' block at each iteration. A classic example of greedy selection is the \textbf{block Gauss-Southwell} (GS) rule, which chooses the block whose gradient has the largest Euclidean norm,
\begin{equation}
	b_k \in \argmax{b \in \mathcal{B}} \| \nabla_b f(x^k) \|,
	\label{eq:GS}
\end{equation}
where we use $\mathcal{B}$ as the set of possible blocks. This rule tends to make more progress per iteration in theory and practice than randomized selection~\citep{dhillon2011nearest,nutini2015}. Unfortunately, for many problems it is more expensive than cyclic or randomized selection. However, several recent works show that certain problem structures allow efficient calculation of GS-style rules~\citep{meshi2012convergence,nutini2015,fountoulakis2016exploiting,lei2016coordinate}, allow efficient approximation of GS-style rules~\citep{dhillon2011nearest,thoppe2014,stich2017approximate}, or allow other rules that try to improve on the progress made at each iteration~\citep{glasmachers2013accelerated,csiba2015,perekrestenko2017faster}. 

 The ideal selection rule is the maximum improvement (MI) rule, which chooses the block that decreases the function value by the largest amount. Notable recent applications of this rule include leading eigenvector computation~\citep{li2015convergence}, polynomial optimization~\citep{chen2012maximum}, and fitting Gaussian processes~\citep{bo2012}. While recent works explore computing or approximating the MI rule for quadratic functions~\citep{bo2012,thoppe2014},
 in general the MI rule is much more expensive than the GS rule.


\subsection{Fixed vs. {\Adaptive}Blocks}
\label{subsec:variable}

While the choice of the block to update has a significant effect on performance, how we define the set of \emph{possible} blocks also has a major impact. Although other variations are possible, we highlight below the two most common {blocking strategies}:
 
\begin{enumerate} 
\item {\bf Fixed blocks}. This method uses a partition of the coordinates into disjoint blocks, as in our simple example above. This partition is typically chosen prior to the first iteration of the BCD algorithm, and this set of blocks is then held fixed for all iterations of the algorithm. We often use blocks of roughly equal size, so if we use blocks of size $\tau$ this method might partition the $n$ coordinates into $n/\tau$ blocks.
Generic ways to partition the coordinates include ``in order" as we did above~\citep{bertsekas1999}, 
or using a random partition~\citep{nesterov2012efficiency}.
Alternatively, the partition may exploit some inherent substructure of the objective function such as block separability~\citep{meier2008group}, the ability to efficiently solve the corresponding sub-problem~\eqref{eq:subProb} with respect to the blocks~\citep{sardy2000}, or based on the Lipschitz constants of the resulting blocks~\citep{thoppe2014,csiba2016importance}.
\item {\bf {\Adaptive}blocks}.  
Instead of restricting our blocks to a pre-defined partition of the coordinates, we could instead consider choosing any of the $2^n - 1$ possible sets of coordinates as our block. In the randomized setting, this is referred to as ``arbitrary'' sampling~\citep{richtarik2013nsync,qu2015}. We say that such strategies use \emph{\adaptive}blocks because we are not choosing from a partition of the coordinates that is fixed across the iterations. Due to computational considerations, when using {\adaptive}blocks we typically want to impose a restriction on the size of the blocks.  For example, we could construct a block of size $\tau$ by randomly sampling $\tau$ coordinates without replacement, which is known as $\tau$-nice sampling~\citep{qu2015,richtarik2016parallel}. Alternately, we could include each coordinate in the block $b_k$ with some probability like $\tau/n$ (so the block size may change across iterations, but we control its expected size).
A version of the greedy Gauss-Southwell rule~\eqref{eq:GS} with \adaptive blocks would select the $\tau$ coordinates corresponding to the elements of the gradient with largest magnitudes~\citep{TsengYun2009}. This can be viewed as a greedy variant of $\tau$-nice sampling. While we can find these $\tau$ coordinates for the Gauss-Southwell rule easily given the gradient vector, computing the MI rule with \adaptive blocks is much more difficult. Indeed, while methods exist to compute the MI rule for quadratics with fixed blocks~\citep{thoppe2014}, with \adaptive blocks it is NP-hard to compute the MI rule and existing works resort to approximations~\citep{bo2012}.
\end{enumerate}

\subsection{Block Update Rules}
\label{subsec:update}

The selection of the update vector $d^k$ can significantly affect performance of the BCD method. For example, in the gradient update~\eqref{eq:grad} the method can be sensitive to the choice of the step-size $\alpha_k$. Classic ways to set $\alpha_k$ include using a fixed step-size (with each block possibly having its own fixed step-size), using an approximate line-search, or using the optimal step-size (which has a closed-form solution for quadratic objectives)~\citep{bertsekas1999}.

\blu{A generalization of the gradient update, where $d^k = -\alpha_k \nabla_{b_k} f(x^k)$, is what we will call a \textbf{matrix update} where
\begin{equation}\label{eq:matrix}
	d^k = - \alpha_k \left ( H_{b_k} \right )^{-1} \nabla_{b_k} f(x^k).
  \end{equation}
  Here, $H_{b_k}$ can be any positive-definite matrix. A notable special case is the \textbf{Newton update},
    \begin{equation}\label{eq:newton}
 	d^k = - \alpha_k\left ( \nabla^2_{b_kb_k} f(x^k) \right )^{-1} \nabla_{b_k} f(x^k),
 \end{equation}
where  $H_{b_k}$ is chosen as the instantaneous Hessian of the block $\nabla^2_{b_kb_k} f(x^k)$ (or a positive-definite approximation to this quantity)}.
    In this context $\alpha_k$ is again a step-size that can be chosen using similar strategies to those mentioned above. Several recent works analyze various forms of matrix updates and show that they can substantially improve the convergence rate~\citep{tappenden2016,qu2015sdna,fountoulakis2015flexible,rodomanov2020randomized,mutny2020convergence}.
 For special problem classes, another possible type of update is what we will call the \textbf{optimal update}. This update chooses $d^k$ to solve~\eqref{eq:subProb}. In other words, it updates the block $b_k$ to maximally decrease the objective function. 

\subsection{Problems of Interest}
\label{sec:problems}

\blu{BCD methods are not suitable for all problems, since for some problems computing the gradient with respect to a block of variables has the same cost as computing the full gradient vector. It is thus important to consider the sets of problem for which they are appropriate. BCD methods tend to be suitable when the cost of updating a block of size $|b_k|$ is $\tilde{O}((|b_k|/n)\kappa)$, where $\kappa$ is the cost of computing the full gradient.}
Two common classes of objective functions where single-coordinate descent methods $(|b_k|=1)$ tend to be suitable are
\begin{align*}
h_1(x)  := \sum_{i=1}^n g_i(x_i) + f(Ax),\quad \text{or} \quad
h_2(x)  := \sum_{i \in V}g_i(x_i) + \sum_{(i,j)\in E}f_{ij}(x_i,x_j),
\end{align*}
where $f$ is smooth and cheap, the $f_{ij}$ are smooth, $G=\{V,E\}$ is a
graph, and $A$ is a matrix. Examples of problems leading to functions of the form $h_1$ include least squares, logistic regression, LASSO, and SVMs.\footnote{Coordinate descent remains suitable for multi-linear generalizations of problem $h_1$ like functions of the form $f(XY)$ where $X$ and $Y$ are both matrix variables.} The most important example of problem $h_2$ is quadratic functions, which are crucial to many aspects of scientific computing.\footnote{Problem $h_2$ can be generalized to allow functions between more than 2 variables, and coordinate descent remains suitable as long as the expected number of functions in which each variable appears is $n$-times smaller than the total number of functions (assuming each function has a constant cost).}

Problems $h_1$ and $h_2$ are also suitable for BCD methods, as they tend to admit efficient block update strategies. In general, if single-coordinate descent is efficient for a problem, then BCD methods with gradient updates~\eqref{eq:grad} are also efficient for that problem and this applies whether we use fixed blocks or \adaptive blocks.\footnote{\blu{This tends to also be true with matrix and Newton updates, provided the blocks are small enough that dealing with the matrices does not significantly change the cost.}}
\blue{In \citet[][Appendix A]{nutini2015} we discuss additional assumptions on the sparsity of $A$ in $h_1$ and the graph structure in $h_2$ that allow greedy rules to be implemented efficiently.}
Other scenarios where coordinate descent and BCD methods have proven useful include matrix and tensor factorization methods~\citep{yu2012scalable,xu2013block}, problems involving log-determinants~\citep{scheinberg2009sinco,hsieh2013big}, and problems involving convex extensions of sub-modular functions~\citep{jegelka2013reflection,ene2015random}.

An important point to note is that there are special problem classes where BCD with fixed blocks is reasonable even though using \adaptive blocks (or single-coordinate updates) would not be suitable. For example, consider a variant of problem $h_1$ where we use \emph{group} L1-regularization~\citep{bakin1999adaptive},
\begin{equation}
\label{eq:groupL1}
h_3(x) := \sum_{b \in \mathcal{B}} \norm{x_b} + f(Ax),
\end{equation}
where $\mathcal{B}$ is a partition of the coordinates. We cannot apply single-coordinate updates to this problem due to the non-smooth norms, but we can take advantage of the group-separable structure in the sum of norms and apply BCD using the blocks in $\mathcal{B}$~\citep{meier2008group,qin2013}.~\citet{sardy2000} in their early work on solving LASSO problems consider problem $h_1$ where the columns of $A$ are the union of a set of orthogonal matrices. By choosing the fixed blocks to correspond to the orthogonal matrices, it is very efficient to apply BCD. In~\ref{app:sparsesoftmax}, we outline how fixed blocks lead to an efficient greedy BCD method for the widely-used multi-class logistic regression problem when the data has a certain sparsity level.\footnote{\blu{Another structure where blocks have an advantage is in solving the SVM dual problem when an unregularized bias variable is used. In this case, there is an equality constraint in the dual which prevents any progress from being made by updating single coordinates. We do not consider equality constraints in this work, but we note that greedy BCD steps are used by the libSVM software~\citep{libsvm} which has been among the best solvers for this problem for around two decades~\citep{horn2018comparative}}.}

\subsection{Iteration Cost for Example Problems}
\label{subsec:examples}

\blue{The total time complexity of a BCD algorithm is given by the number of iterations multiplied by the cost per iteration. In this work we discuss a variety of ways to reduce the number of iterations. However, it is difficult to give generic advice on the use of BCD methods like statements of the form ``greedy BCD methods outperform cyclic BCD methods''. This is because the cost per iteration can vary widely with the problem structure. Indeed, for some problem structures the iteration cost of BCD methods means we should not use them at all.  }

\blue{Since we cannot know the specific problem(s) that the reader has in mind, in this section we illustrate how to use the iteration cost  and progress-per-iteration to choose between 3 potential algorithms in 3 different scenarios. The specific algorithms we consider are gradient descent, cyclic coordinate descent, and greedy coordinate descent with the GS rule (updating a single coordinate on each iteration, and assuming a constant step-size). Gradient descent will usually have the higest iteration cost among these 3 methods but tends to make the most progress per iteration, while cyclic coordinate descent will usually have the smallest iteration cost while making the least progress per iteration.}
\blue{
\begin{itemize}
\item \textbf{Dense quadratic functions}. Quadratic functions are a special case of problem $h_2$, and the use of a dense matrix leads to a complete graph $E$. In this setting, the iteration cost of gradient descent is $O(n^2)$ while the iteration cost of cyclic coordinate descent and greedy coordinate descent is $O(n)$~\citep[see][Appendix A]{nutini2015}. In this setting gradient descent is the worst strategy to use among the 3 methods, due to its high iteration cost, while greedy coordinate descent is the best strategy to use due to its low iteration cost and faster progress per iteration than cyclic coordinate descent.
\item \textbf{Sparse logistic regression}. Logistic regression is a special case of problem $h_1$, and by ``sparse'' we mean that $A$ has many values set to exactly zero. Using $z$ as the total number of non-zero values in $A$, the iteration cost of gradient descent is $O(z)$ while the average amortized iteration cost of cyclic coordinate is $O(z/n)$. For greedy coordinate descent, the worst-case iteration cost can range from $O(z/n)$ up to $O(z)$ depending on the sparsity pattern in $A$~\citep[see][Appendix A]{nutini2015}. If $A$ has an unfavourable sparsity pattern for greedy methods then cyclic coordinate descent is the best strategy due to its low iteration cost. Further, for unfavourable sparsity structures greedy coordinate descent is the worst strategy due to its high iteration cost and the fact that it typically makes less progress per iteration than gradient descent.
\item \textbf{Neural network with multiple hidden layers}. Neural networks do not naturally fit into problem class $h_1$ or $h_2$. Indeed, due to the structure of the gradient calculation, in the worst case computing a single partial derivative has the same cost as computing the entire gradient. Thus, the worst-case cost of cyclic and greedy coordinate descent methods is the same as the cost of gradient descent methods. Thus, in this setting gradient descent is the best among the three options. In contrast, cyclic coordinate descent would be the worst choice in this setting (it has the same iteration cost as gradient descent or greedy coordinate descent but makes less progress per iteration).
\end{itemize}
}
\blue{
As these scenarios illustrate, we should consider gradient descent over coordinate descent methods for problems where computing partial derivatives has the same cost as computing the entire gradient. We should consider cyclic coordinate descent methods when these updates are approximately $O(n)$-times cheaper than gradient descent steps but where greedy rules remain expensive. And finally we should consider greedy coordinate descent in settings where these updates are approximately $O(n)$-times cheaper than gradient descent updates.\footnote{Random coordinate descent with uniform sampling tends to have the same amortized iteration cost as cyclic coordinate descent. Random coordinate descent with Lipschitz sampling can have the same cost as greedy coordinate descent in the worst case.} 
In this work we give various ways to improve the convergence speed of BCD methods. We recommend using these methods in scenarios where the problem structure supports the use of BCD methods (as discussed above) and where the modifications that lead to faster convergence rates do not significantly change the  iteration cost.
}


\section{Improved Greedy Rules}
\label{sec:gsl}

\blue{If we want to speed up the convergence of BCD methods, it is natural to consider the GS rule since it tends to make more progress per iteration than cyclic and randomized rules.}
Previous works have identified that the greedy GS rule can lead to suboptimal progress, and have proposed rules that are closer to the MI rule for the special case of quadratic functions~\citep{bo2012,thoppe2014}. However, for non-quadratic functions it is not obvious how we should approximate the MI rule.
As an intermediate between the GS rule and the MI rule for general functions, a new rule known as the Gauss-Southwell-Lipschitz (GSL) rule has recently been introduced~\citep{nutini2015}. The GSL rule was proposed in the case of single-coordinate updates, and is a variant of the GS rule that incorporates Lipschitz information to guarantee more progress per iteration. The GSL rule is equivalent to the MI rule in the special case of quadratic functions, so either rule can be used in that setting. However, the MI rule involves optimizing over a subspace which will typically be expensive for non-quadratic functions. After reviewing the classic block GS rule, in this section we consider several possible block extensions of the GSL rule that give a better approximation to the MI rule without requiring subspace optimization.


\subsection{Block Gauss-Southwell}
\label{subsec:boundsprogress}

When analyzing BCD methods we typically assume that the gradient of each block $b$ is $L_b$-Lipschitz continuous, meaning that for all $x \in \R^n$ and $d \in \R^{|b|}$
\begin{equation}
\norm{\nabla_b f(x+U_bd) - \nabla_b f(x)} \leq L_b\norm{d},
\label{eq:blockLipschitz}
\end{equation}
for some constant $L_b > 0$. This is a standard assumption, and in~\ref{append:blockLipschitz} we give bounds on $L_b$ for the common data-fitting models of least squares and logistic regression.
If we apply the descent lemma~\citep{bertsekas1999} to the reduced sub-problem~\eqref{eq:subProb} associated with some block $b_k$ selected at iteration $k$, then we obtain the following upper bound on the function value progress,
\begin{align}
	f(x^{k+1}) 
	&\le f(x^k) + \langle \nabla f(x^k), x^{k+1} - x^k \rangle + \frac{L_{b_k}}{2} \| x^{k+1} - x^k \|^2 \label{eq:upperbound}\\
	&= f(x^k) + \langle \nabla_{b_k} f(x^k),d^k \rangle + \frac{L_{b_k}}{2} \| d^k \|^2 \nonumber.
\end{align}
The right side is minimized in terms of $d^k$ under the choice
 \begin{equation}
\label{eq:1/L}
 	d^k = - \frac{1}{L_{b_k}} \nabla_{b_k} f(x^k),
 \end{equation}
which is simply a gradient update with a step-size of $\alpha_k = 1/L_{b_k}$. Substituting this into our upper bound, we obtain
  \begin{align}
 	f(x^{k+1}) 
	&\le f(x^k) - \frac{1}{2L_{b_k}} \| \nabla_{b_k} f(x^k) \|_2^2.
	\label{eq:quadBound}
 \end{align}
  We can use the bound~\eqref{eq:quadBound} to compare the progress made by different selection rules. \blue{For example, if we assume the $L_{b_k}$ are constant then ~\eqref{eq:quadBound} indicates that we expect more progress per iteration as the block size increases, since this will increase the norm of $\nabla_{b_k} f(x^k)$.  If we fix the $L_{b_k}$ and the block size, the GS rule is obtained by minimizing the bound~\eqref{eq:quadBound} with to respect to $b_k$. Thus, under~\eqref{eq:quadBound} the GS rule is guaranteed to make more progress than using cyclic or randomized selection.}
 
The bound~\eqref{eq:quadBound} also indicates that the GS rule can make more progress  with \adaptive blocks than with fixed blocks (under the usual setting where the fixed blocks are a subset of the possible \adaptive blocks). In particular, consider the case where we have partitioned the coordinates into blocks of size $\tau$ and we are comparing this to using variable blocks of size $\tau$. The case where there is no advantage for variable blocks is when the indices corresponding to the $\tau$-largest $|\nabla_i f(x^k)|$ values are in one of the fixed partitions; in this (unlikely) case the GS rule with fixed blocks and variable blocks will choose the same variables to update. The case where we see the largest advantage of using variable blocks is when each of the indices corresponding to the $\tau$-largest $|\nabla_i f(x^k)|$ values are in different blocks of the fixed partition; in this case the last term in~\eqref{eq:quadBound} can be improved by a factor as large as $\tau^2$ when using variable blocks instead of fixed blocks. Thus, with larger blocks there is more of an advantage to using variable blocks over fixed blocks.
 
\subsection{Block Gauss-Southwell-Lipschitz}
\label{sec:BGSL}
The GS rule is not the optimal block selection rule in terms of the bound~\eqref{eq:quadBound} if we know the block-Lipschitz constants $L_b$.
Instead of choosing the block with largest norm, consider minimizing~\eqref{eq:quadBound} in terms of $b_k$,
\begin{equation}
b_k \in \argmax{b \in \mathcal{B}}\left\{\frac{\norm{\nabla_b f(x^k)}^2}{L_{b}}\right\}.
\label{eq:GSL}
\end{equation}
We call this the \textbf{block Gauss-Southwell-Lipschitz} (GSL) rule. If all $L_b$ are the same, then the GSL rule is equivalent to the classic GS rule. However, in the typical case where the $L_b$ differ, the GSL rule guarantees more progress than the GS rule since it incorporates the gradient information as well as the Lipschitz constants $L_b$. For example, it reflects that if the gradients of two blocks are similar, but their Lipschitz constants are very different, then we can guarantee more progress by updating the block with the smaller Lipschitz constant. In the extreme case, for both fixed and variable blocks the GSL rule improves the bound~\eqref{eq:quadBound} over the GS rule by a factor as large as $(\max_{b\in\mathcal{B}}L_b)/(\min_{b\in\mathcal{B}}L_b$).
   
The block GSL rule in \eqref{eq:GSL} is a simple  generalization of the single-coordinate GSL rule to blocks of any size. However, it loses a key feature of the single-coordinate GSL rule: the block GSL rule is \emph{not} equivalent to the MI rule for quadratic functions. Unlike the single-coordinate case, where $\nabla_{ii}^2 f(x^k) = L_i$ so that \eqref{eq:upperbound} holds with equality, for the block case we only have $\nabla_{bb}^2 f(x^k) \preceq L_bI$ so~\eqref{eq:upperbound} may underestimate the progress that is possible in certain directions.\footnote{We say that a matrix $A$ ``upper bounds'' a matrix $B$, written $A \succeq B$, if for all $x$ we have $x^TAx \geq x^TBx$.} In the next section we give a second generalization of the GSL rule that is equivalent to the MI rule for quadratics.

 
 \subsection{Block Gauss-Southwell-Quadratic}
 \label{sec:BGSQ}

For single-coordinate updates, the bound in~\eqref{eq:quadBound} is the tightest quadratic bound on progress we can expect given only the assumption of block Lipschitz-continuity (it holds with equality for quadratic functions). However, for block updates of more than one variable we can obtain a tighter quadratic bound using general quadratic norms of the form $\norm{\cdot}_H = \sqrt{\langle H \cdot, \cdot \rangle}$ for some positive-definite matrix $H$. In particular, assume that each block has a Lipschitz-continuous gradient with $L_b=1$ for a particular positive-definite matrix $H_b \in \R^{|b| \times |b|}$, meaning that
\[
\norm{\nabla_b f(x+U_bd) - \nabla_b f(x)}_{H_b^{-1}} \leq \norm{d}_{H_b},
\]
for all $x \in \R^n$ and $d \in \R^{|b|}$. \blu{In the case of twice-differentiable functions, this is equivalent to requiring that $\nabla_{bb}^2 f(x) \preceq H_b$.}
Due to the equivalence between norms, this merely changes how we measure the continuity of the gradient and is not imposing any new assumptions. Indeed, the block Lipschitz-continuity assumption in~\eqref{eq:blockLipschitz} is just a special case of the above with $H_b = L_bI$, where $I$ is the $|b| \times |b|$ identity matrix.
 Although this characterization of Lipschitz continuity appears more complex, for some functions it is actually computationally cheaper to construct matrices $H_b$ than to find valid bounds $L_b$. We show this in~\ref{append:blockLipschitz} for the cases of least squares and logistic regression.

Under this alternative characterization of the Lipschitz assumption, at each iteration $k$ we have
\begin{equation}\label{eq:altLip}
	f(x^{k+1}) \le f(x^k) + \langle \nabla_{b_k} f(x^k),d^k \rangle + \frac{1}{2} \| d^k \|^2_{H_{b_k}}.
\end{equation}
The right-hand side of~\eqref{eq:altLip} is minimized when
 \begin{equation}\label{eq:soupdate}
 	d^k = - \left ( H_{b_k} \right )^{-1} \nabla_{b_k} f(x^k),
 \end{equation}
\blu{ which corresponds to using the matrix update~\eqref{eq:matrix} with a step size of $\alpha_k=1$. While this update is equivalent to Newton's method for quadratic functions, it is important to distinguish this matrix update from Newton's method: Newton's method is based on the instantaneous Hessian $\nabla_{bb}^2 f(x^k)$ and may require $\alpha_k < 1$ to decrease the objective, while the matrix update~\eqref{eq:soupdate} is based on a matrix $H_{b}$ that upper bounds the Hessian for all $x$ (in the twice-differentiable case) and is thus guaranteed to decrease the objective with $\alpha_k=1$. We will discuss Newton updates further in Section~\ref{sec:newton}.}

Substituting the matrix update~\eqref{eq:soupdate} into the upper bound  yields
\begin{equation}\label{eq:uppersecondorder}
    f(x^{k+1}) \le f(x^k) - \frac{1}{2}  \| \nabla_{b_k} f(x^k) \|_{H_{b_k}^{-1}}^2.
\end{equation}
Consider a simple quadratic function $f(x) = x^TAx$ for a positive-definite matrix $A$. In this case we can take $H_b$ to be the sub-matrix $A_{bb}$ while in our previous bound we would require $L_b$ to be the maximum eigenvalue of this sub-matrix. Thus, in the worst case (where $\nabla_{b_k}f(x^k)$ is in the span of the principal eigenvectors of $A_{bb}$) the new bound is at least as good as~\eqref{eq:quadBound}. However, if the eigenvalues of $A_{bb}$ are spread out then this bound shows that the matrix update will typically guarantee substantially more progress; in this case the quadratic bound~\eqref{eq:uppersecondorder} can improve on the bound in~\eqref{eq:quadBound} by a factor as large as the condition number of $A_{bb}$ when updating block $b$.
The update \eqref{eq:soupdate} was analyzed for randomized BCD methods in several recent works~\citep{tappenden2016,qu2015sdna,fountoulakis2015flexible,rodomanov2020randomized,mutny2020convergence}, which considered random selection of the blocks. They show that this update provably reduces the number of iterations required, and in some cases dramatically. For the special case of quadratic functions where~\eqref{eq:uppersecondorder} holds with equality, greedy rules based on minimizing this bound have been explored for both fixed~\citep{thoppe2014} and variable~\citep{bo2012} blocks.

Rather than focusing on the special case of quadratic functions, we want to define a better greedy rule than~\eqref{eq:GSL} for functions with Lipschitz-continuous gradients. 
By optimizing~\eqref{eq:uppersecondorder} in terms of $b_k$ we obtain a second generalization of the GSL rule,
\begin{equation}
b_k \in \argmax{b \in \mathcal{B}}\left\{\norm{\nabla_b f(x^k)}_{H_b^{-1}}\right\} \equiv \argmax{b \in \mathcal{B}} \left\{\nabla_b f(x^k)^TH_b^{-1}\nabla_b f(x^k)\right\},
\label{eq:GSF}
\end{equation}
which we call the \textbf{block Gauss-Southwell quadratic} (GSQ) rule.\footnote{While preparing this work for submission, we were made aware of a work that independently proposed a related rule under the name ``greedy mini-batch'' rule~\citep{csiba2017global}. \blu{The greedy mini-batch rule is a special case of the more-restricted GSQ rule that we discuss in Section~\ref{sec:aGSQ}.}}
Since~\eqref{eq:uppersecondorder} holds with equality for quadratics this new rule is equivalent to the MI rule in that case. This rule also applies to non-quadratic functions where it guarantees a better bound on progress than the GS rule (and the GSL rule).

\subsection{Block Gauss-Southwell-Diagonal}
\label{subsec:fixed}

While the GSQ rule has appealing theoretical properties, for many problems it may be difficult to find full matrices $H_b$ and their storage may also be an issue. Previous  related works~\citep{TsengYun2009,qu2015sdna,csiba2017global} address this issue by restricting the matrices $H_b$ to be diagonal matrices $D_b$. Under this choice we obtain a rule of the form
\begin{equation}
b_k \in \argmax{b \in \mathcal{B}}\left\{\norm{\nabla_b f(x^k)}_{D_b^{-1}}\right\} \equiv \argmax{b \in \mathcal{B}}\left\{\sum_{i \in b} \frac{ |\nabla_i f(x^k)|^2 }{D_{b,i} }\right\},
\label{eq:GSD}
\end{equation}
where we are using $D_{b,i}$ to refer to the diagonal element corresponding to coordinate $i$ in block $b$. We call this the \textbf{block Gauss-Southwell diagonal} (GSD) rule. This bound arises if we consider a gradient update, where coordinate $i$ has a constant step-size  of $D_{b,i}^{-1}$ when updated as part of block $b$. This rule gives an intermediate approach that can guarantee more progress per iteration than the GSL rule, but that may be easier to implement than the GSQ rule.

\subsection{Convergence Rate under Polyak-\L{}ojasiewicz}
\label{subsec:convergence}

Our discussion above focuses on the progress we can guarantee at each iteration, assuming only that the function has a Lipschitz-continuous gradient. Under additional assumptions, it is possible to use these progress bounds to derive convergence rates on the overall BCD method. For example, we say that a function $f$ satisfies the Polyak-\L{}ojasiewicz (PL) inequality \citep{polyak1963} if for all $x$ we have for some $\mu > 0$ that
\begin{equation}
\label{eq:PL}
	\frac{1}{2} \left ( \| \nabla f(x) \|_* \right )^2 \ge \mu \left ( f(x) - f^* \right ),
\end{equation}
where $\| \cdot \|_*$ can be any norm and $f^*$ is the optimal function value. The function class satisfying this inequality includes all strongly-convex functions, but also includes a variety of other important problems like least squares~\citep{karimi2016}. This inequality leads to a simple proof of the linear convergence of any algorithm which has a progress bound of the form
\begin{equation}
\label{eq:gradBound}
f(x^{k+1}) \leq f(x^k) - \frac{1}{2}\norm{\nabla f(x^k)}_*^2,
\end{equation}
such as gradient descent or coordinate descent with the GS rule~\citep{karimi2016}. 

\begin{theorem}
\label{thm:linearconvergence}
Assume $f$ satisfies the PL-inequality~\eqref{eq:PL} for some $\mu > 0$ and norm $\| \cdot \|_*$. Any algorithm that satisfies a progress bound of the form \eqref{eq:gradBound} with respect to the same norm $\| \cdot \|_*$ obtains the following linear convergence rate, 
\begin{equation}
\label{thmeq:linearrate}
	f(x^{k+1}) - f^* \le (1 - \mu)^k \left [ f(x^0) - f^* \right ].
\end{equation}
\end{theorem}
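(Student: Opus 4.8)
The plan is to collapse the two hypotheses into a single one-step contraction and then unroll the resulting recursion. First I would chain the progress bound~\eqref{eq:gradBound} with the PL inequality~\eqref{eq:PL}: the progress bound guarantees that the decrease at iteration $k$ is at least $\frac{1}{2}\norm{\nabla f(x^k)}_*^2$, while the PL inequality (applied at $x^k$, with the same norm $\norm{\cdot}_*$) says this quantity is at least $\mu\left(f(x^k)-f^*\right)$. Substituting gives
\[
f(x^{k+1}) \le f(x^k) - \mu\left(f(x^k) - f^*\right).
\]
Subtracting $f^*$ from both sides and collecting terms yields the per-iteration contraction
\[
f(x^{k+1}) - f^* \le (1-\mu)\left(f(x^k) - f^*\right).
\]

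Next I would check that the contraction factor is legitimate, i.e.\ that $0 \le 1-\mu < 1$. Since $\mu > 0$ by assumption, $1-\mu < 1$ is immediate; for $1-\mu \ge 0$ one observes that whenever $f(x^k) > f^*$ the left-hand side $f(x^{k+1}) - f^*$ is nonnegative because $f^*$ is the optimal value, which forces $\mu \le 1$ (and if $f(x^k) = f^*$ the iterate is already optimal and~\eqref{thmeq:linearrate} holds trivially). Finally I would iterate the contraction: applying it repeatedly from iteration $0$ gives $f(x^k) - f^* \le (1-\mu)^k\left(f(x^0) - f^*\right)$, hence $f(x^{k+1}) - f^* \le (1-\mu)^{k+1}\left(f(x^0) - f^*\right) \le (1-\mu)^k\left(f(x^0) - f^*\right)$, where the last step uses $(1-\mu)\le 1$; this is exactly~\eqref{thmeq:linearrate}.

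I do not anticipate a genuine obstacle here — the argument is a two-line algebraic combination of the hypotheses followed by a routine geometric unrolling. The only point that deserves a moment's care is justifying $\mu \le 1$, so that the claimed geometric factor produces a genuinely nonincreasing (and convergent) sequence; as noted above this is automatic once we use that $f^*$ lower-bounds $f$. It is also worth remarking that the stated bound is slightly loose in the exponent (one actually obtains $(1-\mu)^{k+1}$), but this is harmless.
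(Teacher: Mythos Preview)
Your proof is correct and follows essentially the same approach as the paper, which simply subtracts $f^*$ from both sides of the progress bound, applies the PL inequality, and concludes by recursion. Your additional remarks about verifying $\mu \le 1$ and about the stated exponent being one unit looser than what the recursion actually yields are accurate observations, but the core argument is identical.
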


\begin{proof}
By subtracting $f^*$ from both sides of \eqref{eq:gradBound} and applying \eqref{eq:PL} directly, we obtain our result by recursion.
\end{proof}

Thus, if we can describe the progress obtained using a particular block selection rule and block update rule in the form of~\eqref{eq:gradBound}, then  we have a linear rate of convergence for BCD on this class of functions. It is straightforward to do this using an appropriately defined norm, as shown in the following corollary.

\begin{corollary}
Assume $\nabla f$ is Lipschitz continuous~\eqref{eq:altLip} and that $f$ satisfies the PL-inequality \eqref{eq:PL} in the norm defined by
\begin{equation}
\label{eq:mixedNormDef}
\norm{v}_{\B} = \max_{b \in \B} \norm{v_b}_{H_{b^{-1}}},
\end{equation}
for some $\mu > 0$ and matrix $H_b \in \R^{|b| \times |b|}$. Then the BCD method using either the GSQ, GSL, GSD or GS selection rule achieves a linear convergence rate of the form~\eqref{thmeq:linearrate}.
\end{corollary}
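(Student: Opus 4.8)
The plan is to reduce the statement to \Cref{thm:linearconvergence}. That theorem only needs a per-iteration progress bound of the form \eqref{eq:gradBound} in the relevant norm, so it suffices to show that each of the four rules, paired with an appropriate block update, guarantees
\[
f(x^{k+1}) \le f(x^k) - \tfrac{1}{2}\,\norm{\nabla f(x^k)}_\B^2
\]
with $\norm{\cdot}_\B$ the mixed norm \eqref{eq:mixedNormDef}; the rate \eqref{thmeq:linearrate} is then immediate from \Cref{thm:linearconvergence}, with $\mu$ the PL constant in that norm.

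First I would treat the GSQ rule. Combining the matrix-Lipschitz inequality \eqref{eq:altLip} with the matrix update \eqref{eq:soupdate} gives \eqref{eq:uppersecondorder}, i.e. $f(x^{k+1}) \le f(x^k) - \tfrac{1}{2}\norm{\nabla_{b_k} f(x^k)}_{H_{b_k}^{-1}}^2$. Since the GSQ rule \eqref{eq:GSF} chooses $b_k \in \argmax{b\in\B}\norm{\nabla_b f(x^k)}_{H_b^{-1}}$, the subtracted term equals $\tfrac{1}{2}\max_{b\in\B}\norm{\nabla_b f(x^k)}_{H_b^{-1}}^2 = \tfrac{1}{2}\norm{\nabla f(x^k)}_\B^2$ by \eqref{eq:mixedNormDef}. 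This is exactly the bound we need, so \Cref{thm:linearconvergence} applies.

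Next I would observe that GSL, GSD, and GS are instances of the same argument under structured choices of $H_b$: take $H_b = L_b I$ for GSL, $H_b = D_b$ for GSD (with $D_b$ a diagonal matrix for which \eqref{eq:altLip} holds), and $H_b = \big(\max_{b'\in\B} L_{b'}\big) I$ for GS. With these choices the matrix update \eqref{eq:soupdate} reduces to the (possibly diagonally) scaled gradient step that the rule uses, the inequality \eqref{eq:uppersecondorder} reduces to \eqref{eq:quadBound} or its diagonal analogue, and the $\argmax$ in \eqref{eq:GSF} reduces to the selection rule \eqref{eq:GSL}, \eqref{eq:GSD}, or \eqref{eq:GS}, respectively. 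Hence the GSQ computation applies verbatim with the norm $\norm{\cdot}_\B$ induced by the relevant $H_b$, and \Cref{thm:linearconvergence} again delivers \eqref{thmeq:linearrate}. (Replacing these conservative updates by a sharper step size or an exact line search only decreases $f(x^{k+1})$ further, so the progress bound, and therefore the rate, is preserved.)

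There is no deep obstacle here; the one point that needs care — and the reason the corollary phrases the PL hypothesis the way it does — is that the norm in the PL inequality must be matched to the $H_b$ the rule actually uses, so the four rules generically attain \eqref{thmeq:linearrate} with different constants $\mu$ and different norms $\norm{\cdot}_\B$. The GS case additionally requires a finite uniform bound $\max_{b'\in\B}L_{b'}$ on the block-Lipschitz constants, which is automatic whenever $\B$ is finite (any fixed-block partition, and also variable blocks since there are finitely many coordinate subsets).
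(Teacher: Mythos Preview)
Your proposal is correct and follows essentially the same approach as the paper: derive the progress bound \eqref{eq:mixedNorm} for GSQ from \eqref{eq:uppersecondorder} and the definition of the mixed norm, then invoke \Cref{thm:linearconvergence}, and handle GSL, GSD, and GS by noting they are special cases of GSQ under particular choices of $H_b$. Your write-up is in fact more explicit than the paper's about which $H_b$ yields each rule and about the norm-matching caveat, but the underlying argument is the same.
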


\begin{proof}
Using the definition of the GSQ rule~\eqref{eq:GSF} in the progress bound resulting from the Lipschitz continuity of $\nabla f$ and the matrix update~\eqref{eq:uppersecondorder}, we have 
\begin{equation}
\label{eq:mixedNorm}
\begin{aligned}
	f(x^{k+1}) & \leq f(x^k) - \frac{1}{2}\max_{b \in \B}\left\{\norm{\nabla_{b}f(x^k)}_{H_{b}^{-1}}^2\right\}\\
	& = f(x^k) - \frac{1}{2}\norm{\nabla f(x^k)}_{\B}^2.
\end{aligned}
\end{equation}
By Theorem~\ref{thm:linearconvergence} and the observation that the GSL, GSD and GS rules are all special cases of the GSQ rule corresponding to specific choices of $H_b$, we have our result.
\end{proof}
We refer the reader to the work of~\citet{csiba2017global} for alternate analyses of a variety of BCD methods, \blu{but we note that this rate will typically be faster than their rate for greedy methods (they show a general result that also includes randomized selection rules, but they obtain the same rate for random and greedy variants while this rate will be faster than random).}


\subsection{Convergence Rate with General Functions}
\label{subsec:nonconvex}

The PL inequality is satisfied for many problems of practical interest, and is even satisfied for some non-convex functions. However, general non-convex functions do not satisfy the PL inequality and thus the analysis of the previous section does not apply. Without a condition like the PL inequality, it is difficult to show convergence to the optimal function value $f^*$ (since we should not expect a local optimization method to be able to find the global solution of functions that may be NP-hard to minimize). However, the bound~\eqref{eq:mixedNorm} still implies a weaker type of convergence rate even for general non-convex problems. The following result is a generalization of a standard argument for the convergence rate of gradient descent~\citep{Nes04b}, and gives us an idea of how fast the BCD method is able to find a point resembling a stationary point even in the general non-convex setting.

\begin{theorem}
\label{thm:2}
Assume $\nabla f$ is Lipschitz continuous~\eqref{eq:altLip} and that  $f$ is bounded below by some $f^*$. Then the BCD method using either the GSQ, GSL, GSD or GS selection rule achieves the following convergence rate of the minimum gradient norm,
\[
\min_{t=0,1,\dots,k-1}\norm{\nabla f(x^t)}_\B^2 \leq \frac{2(f(x^0) - f^*)}{k}.
\]
\end{theorem}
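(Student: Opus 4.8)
The plan is to combine the per-iteration progress bound~\eqref{eq:mixedNorm} with a telescoping-sum argument, exactly as in the classical analysis of gradient descent on non-convex functions. First I would recall that, by the same reasoning used to establish~\eqref{eq:mixedNorm} in the previous corollary, any of the GSQ, GSL, GSD, or GS rules (the latter three being special cases of GSQ for particular choices of $H_b$) combined with the corresponding matrix/gradient update yields
\[
f(x^{t+1}) \le f(x^t) - \tfrac{1}{2}\norm{\nabla f(x^t)}_\B^2
\]
for every iteration $t \ge 0$.

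Next I would sum this inequality over $t = 0, 1, \dots, k-1$. The $f(x^t)$ terms telescope, leaving
\[
f(x^k) \le f(x^0) - \tfrac{1}{2}\sum_{t=0}^{k-1}\norm{\nabla f(x^t)}_\B^2 .
\]
Then I would invoke the assumption that $f$ is bounded below by $f^*$, i.e.\ $f^* \le f(x^k)$, and rearrange to obtain
\[
\tfrac{1}{2}\sum_{t=0}^{k-1}\norm{\nabla f(x^t)}_\B^2 \le f(x^0) - f^* .
\]

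Finally, I would use the elementary fact that the minimum of a finite collection of nonnegative numbers is at most their average: since there are $k$ terms in the sum,
\[
\min_{t=0,1,\dots,k-1}\norm{\nabla f(x^t)}_\B^2 \;\le\; \frac{1}{k}\sum_{t=0}^{k-1}\norm{\nabla f(x^t)}_\B^2 \;\le\; \frac{2\bigl(f(x^0) - f^*\bigr)}{k},
\]
which is the claimed bound. There is no real obstacle here: the only point requiring any care is making explicit that the progress bound~\eqref{eq:mixedNorm} holds for all four selection rules (via the $H_b$-specialization observation already made in the corollary), and that the ``min $\le$ average'' step is valid because every term $\norm{\nabla f(x^t)}_\B^2$ is nonnegative. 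The result is then immediate by recursion/telescoping.
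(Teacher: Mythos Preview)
Your proposal is correct and follows essentially the same approach as the paper's proof: rearrange the per-iteration progress bound~\eqref{eq:mixedNorm}, telescope over $t=0,\dots,k-1$, apply the lower bound $f(x^k)\ge f^*$, and use that the minimum term is bounded by the average. The paper's write-up is virtually identical (modulo a minor index typo where it writes $f(x^{k+1})$ instead of $f(x^k)$ after telescoping), so there is nothing to add.
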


\noindent {\bf Proof}~
By rearranging~\eqref{eq:mixedNorm}, we have
\[
\frac{1}{2}\norm{\nabla f(x^k)}_\B^2 \leq f(x^k) - f(x^{k+1}).
\]
Summing this inequality over iterations $t = 0$ up to $(k-1)$ yields
\[
\frac{1}{2}\sum_{t=0}^{k-1}\norm{\nabla f(x^t)}_\B^2 \leq f(x^0) - f(x^{k+1}).
\]
Using that all $k$ elements in the sum are lower bounded by their minimum and that $f(x^{k+1}) \geq f^*$, we get 
\[
\frac{k}{2}\left(\min_{t=0,1,\dots,k-1}\norm{\nabla f(x^t)}_\B^2\right) \leq f(x^0) - f^*. \qed
\]
Due to the potential non-convexity we cannot say anything about the gradient norm of the final iteration, but this shows that the minimum gradient norm converges to zero with an error at iteration $k$ of $O(1/k)$. This is a global sublinear result, but note that if the algorithm eventually arrives and stays in a region satisfying the PL inequality around a set of  local optima, then the local convergence rate to this set of optima will increase to be linear.

%

\section{Practical Issues}
\label{sec:practical}

The previous section defines new greedy rules that yield a simple analysis. But in practice there are several issues that remain to be addressed. For example, it seems intractable to compute any of the new rules in the case of variable blocks. Furthermore, we may not know the Lipschitz constants for our problem. For fixed blocks we also need to consider how to partition the coordinates into blocks. Another issue is that the $d^k$ choices used above do not incorporate the local Hessian information.
Although how we address these issues will depend on our particular application, in this section we discuss several issues associated with these practical considerations.
\blu{Several parts of this section discuss the use of coordinate-wise Lipschitz constants $L_i$ or having a matrix upper bound $M$ on the full Hessian $\nabla^2 f(x)$ (for all $x$). In~\ref{append:blockLipschitz} we discuss how to obtain such bounds for least squares, binary logistic regressionm, and multi-class logistic regression. For other problems it may be more difficult to obtain such quantities.}

\subsection{Tractable GSD for Variable Blocks}
\label{subsec:adaptive}

The problem with using any of the new selection rules above in the case of \adaptive blocks is that they seem intractable to compute for any non-trivial block size. In particular, to compute the GSL rule using \adaptive blocks requires the calculation of $L_b$ for each possible block, which seems intractable for any problem of reasonable size. Since the GSL rule is a special case of the GSD and GSQ rules, these rules also seem intractable in general. 
In this section we show how to restrict the GSD matrices so that this rule has the same complexity as the classic GS rule. 

Consider a variant of the GSD rule where each $D_{b,i}$ can depend on $i$ but does not depend on $b$, so we have $D_{b,i} = d_i$ for some value $d_i \in \R_+$ for all blocks $b$. This gives a rule of the form
\begin{equation}
b_k \in \argmax{b \in \mathcal{B}}\left\{\sum_{i \in b} \frac{ |\nabla_i f(x^k)|^2 }{d_{i} }\right\}.
\label{eq:GSC}
\end{equation}
Unlike the general GSD rule, this rule has essentially the same complexity as the classic GS rule since it simply involves finding the largest values of the ratio $|\nabla_i f(x^k)|^2/d_i$. 

A natural choice of the $d_i$ values would seem to be $d_i = L_i$, since in this case we recover the GSL rule if the blocks have a size of 1 (here we are using $L_i$ to refer to the coordinate-wise Lipschitz constant of coordinate $i$). Unfortunately, this does not lead to a bound of the form~\eqref{eq:gradBound} as needed in Theorems~\ref{thm:linearconvergence} and~\ref{thm:2} because coordinate-wise $L_i$-Lipschitz continuity with respect to the Euclidean norm does not imply 1-Lipschitz continuity with respect to the norm $\| \cdot \|_{D_b^{-1}}$ when the block size is larger than 1. Subsequently, the steps under this choice may increase the objective function. 
A similar restriction on the $D_b$ matrices in \eqref{eq:GSC} is used in the implementation of Tseng and Yun based on the Hessian diagonals~\citep{TsengYun2009}, but their approach similarly does not give an upper bound and thus they employ a line-search in their block update. 

It is possible to avoid needing a line-search by setting $D_{b,i} = L_i\tau$, where $\tau$ is the maximum block size in $\mathcal{B}$. This still generalizes the single-coordinate GSL rule, and in~\ref{append:GSd} we show that this leads to a bound of the form~\eqref{eq:gradBound} for twice-differentiable convex functions (thus  Theorems~\ref{thm:linearconvergence} and~\ref{thm:2} hold). If all blocks have the same size then this approach selects the same block as using $D_{b,i} = L_i$, but the matching block update uses a much-smaller step-size that guarantees descent. We do not advocate using this smaller step, but note that the bound we derive also holds for alternate updates like taking a gradient update with $\alpha_k = 1/L_{b_k}$ or using a matrix update based on $H_{b_k}$. 

The choice of $D_{b,i} = L_i\tau$ leads to a fairly pessimistic  bound, but it is not obvious even for simple problems how to choose an optimal set of $D_{i}$ values. 
Choosing these values is related  to the problem of finding an expected separable over-approximation (ESO), which arises in the context of randomized coordinate descent methods~\citep{richtarik2016parallel}. Qu and Richt\'arik give an extensive discussion of how we might bound such quantities for certain problem structures~\citep{qu2016coordinate}.
In our experiments we also explored another simple choice that is inspired by the ``simultaneous iterative reconstruction technique'' (SIRT) from tomographic image reconstruction~\citep{gregor2015comparison}. In this approach, we use a matrix upper bound $M$ on the full Hessian $\nabla^2 f(x)$ (for all $x$) and set\footnote{It follow that $D - M \succeq 0$ because it is symmetric and diagonally-dominant with non-negative diagonals.}
\begin{equation}
\label{eq:GSDsum}
D_{b,i} = \sum_{j=1}^n|M_{i,j}|.
\end{equation}
\blu{Unfortunately, for many problems this approximation might be too costly since it requires forming the full-matrix upper-bound $M$.\footnote{Costing $O(n^2)$ to compute all possible $n$ values, plus the cost of forming the upper bound $M$ which is problem-dependent, but may be larger than $O(n^2)$.}}
In our experiments we did find that this choice worked better when using gradient updates, but using the simpler $L_i\tau$ is less expensive and was more effective when doing matrix updates.

By using the relationship $L_b \leq \sum_{i\in b}L_i \leq |b|\max_{i\in b}L_i$, two other ways we might consider defining a more-tractable rule could be 
\[
b_k \in \argmax{b \in \mathcal{B}}\left\{\frac{\sum_{i\in b}|\nabla_i f(x^k)|^2}{|b|\max_{i\in b}L_i}\right\}, \quad \text{or} \quad b_k \in \argmax{b \in \mathcal{B}}\left\{\frac{\sum_{i\in b}|\nabla_i f(x^k)|^2}{\sum_{i\in b}L_i}\right\}.
\]
The rule on the left can be computed using dynamic programming while the rule on the right can be computed using an algorithm of~\citet{megiddo1979combinatorial}. However, when using a step-size of $1/L_b$ we found both rules performed similarly or worse to using the GSD rule with $D_{i,b} = {L_i}$ (when paired with gradient or matrix updates).\footnote{On the other hand, the rule on the right worked better if we forced the algorithms to use a step-size of $1/(\sum_{i \in b}L_i)$. However, this lead to worse performance overall than using the larger $1/L_b$ step-size.}

\subsection{Tractable GSQ for Variable Blocks}
\label{sec:aGSQ}

In order to make the GSQ rule tractable with variable blocks, we could similarly require that the entries of $H_b$ depend solely on the coordinates $i \in b$, so that $H_b = M_{b,b}$ where $M$ is a fixed matrix (as above) and $M_{b,b}$ refers to the sub-matrix corresponding to the coordinates in $b$. 
Our restriction on the GSD rule in the previous section corresponds to the case where $M$ is diagonal. In the full-matrix case, the block selected according to this rule is given by the coordinates corresponding to the non-zero variables of an L0-constrained quadratic minimization,
\begin{equation}
\label{eq:L0min}
\argmin{\norm{d}_0 \leq \tau} \left\{f(x^k) + \langle \nabla f(x^k), d\rangle + \frac{1}{2}d^TMd\right\},
\end{equation}
where $\norm{\cdot}_0$ is the number of non-zeroes.
This selection rule is discussed in~\citet{TsengYun2009}, but in their implementation they use a diagonal $M$. \blu{\citet{csiba2017global} discuss and analyze the convergence rate of this rule under the name ``greedy mini-batch'' rule.}
Although this problem is NP-hard with a non-diagonal $M$, there is a recent wealth of literature on algorithms for finding approximate solutions. For example, one of the simplest local optimization methods for this problem is the iterative hard-thresholding (IHT) method~\citep{blumensath2009iterative}. Another popular method for approximately solving this problem is the orthogonal matching pursuit (OMP) method from signal processing which is also known as forward selection in statistics~\citep{pati1993orthogonal,hocking1976biometrics}.
Computing $d$ via~\eqref{eq:L0min} is also equivalent to computing the MI rule for a quadratic function, \blu{and thus we could alternately use the analogue of OMP designed by~\citet{bo2012} for this problem. }

Although it appears quite general, note that the exact GSQ rule under this restriction on $H_b$ does not guarantee as much progress as the more-general GSQ rule (if computed exactly) that we proposed in the previous section. For some problems we can obtain tighter matrix bounds over blocks of variables than are obtained by taking the sub-matrix of a fixed matrix-bound over all variables. We show this for the case of multi-class logistic regression in~\ref{append:blockLipschitz}. As a consequence of this result we conclude that there does not appear to be a reason to use this restriction in the case of fixed blocks.

Although using the GSQ rule with variable blocks forces us to use an approximation, these approximations might still select a block that makes more progress than methods based on diagonal approximations (which ignore the strengths of dependencies between variables). On the other hand, it is possible that approximating the GSQ rule does not necessarily lead to a bound of the form~\eqref{eq:gradBound} as there may be no fixed norm for which this inequality holds. \blu{If we are using an iterative solution method to solve~\eqref{eq:L0min}, one way to guarantee that Theorems~\ref{thm:linearconvergence} and~\ref{thm:2} hold for some fixed norm is to intialize the iterative solution method with the result from an update rule that already satisfies these bounds (such as the GS or GSD rule). In other words, if you have an iterative solver for approximating the GSQ rule then you could initialize it with something like the GS rule. With this initialization it makes at least as much progress as the GS rule on each iteration and thus has a convergence rate that is at least as fast as the GS rule.\footnote{This is assuming that the GSQ rule is chosen to provide a tighter bound, and assuming that the iterative solver returns a value making at least as much progress as its initialization in solving~\eqref{eq:gradBound}.} }

The main disadvantages of this approach for large-scale problems are the need to deal with the full matrix $M$ (which does not arise when using a diagonal approximation or using fixed blocks) and \blu{ the cost of computing an approximate solution to~\eqref{eq:L0min}. For example, assuming no additional problem structure, the cost of the OMP method in this setting is $O(n\tau^2)$. This per-iteration cost becomes expensive compared to rules based on diagonal updates for larger $\tau$, and we found that it made the approach less effective than diagonal approximations in terms of runtime}. In large-scale settings we would need to consider matrices $M$ with special structures like the sum of a diagonal matrix with a sparse and/or a low-rank matrix.

\subsection{Lipschitz Estimates for Fixed Blocks}
\label{sec:LA}

Using the GSD rule with the choice of $D_{i,b} = L_i$ 
may also be useful in the case of fixed blocks. In particular, if it is easier to compute the single-coordinate $L_i$ values than the block $L_b$ values then we might prefer to use the GSD rule with this choice. On the other hand, 
an appealing alternative in the case of fixed blocks 
is to use an estimate of $L_b$ for each block as in Nesterov's work~\citep{nesterov2012efficiency}. In particular, for each $L_b$ we could start with some small estimate (like $L_b = 1$) and then double it whenever the inequality~\eqref{eq:quadBound} is not satisfied (since this indicates $L_b$ is too small). Given some $b$, the bound obtained under this strategy is at most a factor of 2 slower than using the optimal values of $L_b$. Further, if our estimate of $L_b$ is much smaller than the global value, then this strategy can actually guarantee much more progress than using the ``correct'' $L_b$ value.\footnote{While it might be tempting to also apply such estimates in the case of \adaptive blocks, a practical issue is that we would need a step-size for all of the exponential number of possible \adaptive blocks.}

In the case of matrix updates, we can use~\eqref{eq:uppersecondorder} to verify that an $H_b$ matrix is valid~\citep{fountoulakis2015flexible}. Recall that~\eqref{eq:uppersecondorder} is derived by plugging the update~\eqref{eq:soupdate} into the Lipschitz progress bound~\eqref{eq:altLip}. 
Unfortunately, it is not obvious how to update a matrix $H_b$ if we find that it is not a valid upper bound.
 One simple possibility is to multiply the elements of our estimate $H_b$ by 2. This is equivalent to using a matrix update but with a scalar step-size $\alpha_k$,
 \begin{equation}
 \label{eq:matrixStepSize}
 d^k = -\alpha_k(H_b)^{-1}\nabla_{b_k}f(x^k),
 \end{equation}
 similar to the step-size in the Newton update~\eqref{eq:newton}. 

\subsection{Efficient Line-Searches}
\label{sec:line-search}

The Lipschitz approximation procedures of the previous section do not seem practical when using variable blocks, since there are an exponential number of possible blocks. To use variable blocks for problems where we do not know $L_b$ or $H_b$, a reasonable approach is to use a line-search. For example, we can choose $\alpha_k$ in~\eqref{eq:matrixStepSize} using a standard line-search like those that use the Armijo condition or Wolfe conditions~\citep{wright1999numerical}. When using large block sizes with gradient updates, line-searches based on the Wolfe conditions are likely to make more progress than using the \emph{true} $L_b$ values (since for large block sizes the line-search would tend to choose values of $\alpha_k$ that are much larger than $\alpha_k = 1/L_{b_k}$).

Further, the problem structures that lend themselves to efficient coordinate descent algorithms tend to lend themselves to efficient line-search algorithms. For example, if our objective has the form $f(Ax)$ then a line-search would try to minimize the $f(Ax^k + \alpha_kAU_{b_k}d^k)$ in terms of $\alpha_k$. Notice that the algorithm would already have access to $Ax^k$ and that we can efficiently compute $AU_{b_k}d^k$ since it only depends on the columns of $A$ that are in $b_k$. Thus, after (efficiently) computing $AU_{b_k}d^k$ once, the line-search simply involves trying to minimize $f(v_1 + \alpha_kv_2)$ in terms of $\alpha_k$ (for particular vectors $v_1$ and $v_2$). \blu{The cost of line-search backtracking in this setting would thus simply be $O(\tau)$ to compute $v_1 + \alpha_k v_2$ plus the cost of evaluating $f$ given a vector (which is assumed to be much cheaper than evaluating products with $A$), the same cost as performing the BCD update}. Thus, the cost of this approximate minimization does not add a large cost to the overall algorithm. \blu{Line-searches can similarly be efficitently implemented for many of the problem structures highlighted in Section~\ref{sec:problems}}.

\subsection{Block Partitioning with Fixed Blocks}
\label{sec:part}

Several prior works note that for fixed blocks the partitioning of coordinates into blocks can play a significant role in the performance of BCD methods.~\citet{thoppe2014} suggest trying to find a block-diagonally dominant partition of the coordinates, and experimented with a heuristic for quadratic functions where the coordinates corresponding to the rows with the largest values were placed in the same block.
 In the context of parallel BCD,~\citet{scherrer2012} consider a feature clustering approach in the context of problem $h_1$ that tries to minimize the spectral norm between columns of $A$ from different blocks.~\citet{csiba2016importance} discuss strategies for partitioning the coordinates when using randomized selection. \blu{In the case of least squares, finding a good partitioning into blocks is related to the problem of matrix paving~\citep{needell2012}.}

Based on the discussion in the previous sections, for greedy BCD methods it is clear that we guarantee the most progress if we can make the mixed norm $\norm{\nabla f(x^k)}_\B$  as large as possible \emph{across iterations} (assuming that the $H_b$ give a valid bound). This supports strategies where we try to minimize the maximum Lipschitz constant across iterations. One way to do this is to try to ensure that the average Lipschitz constant across the blocks is small. For example, we could place the largest $L_i$ value with the smallest $L_i$ value, the second-largest $L_i$ value with the second-smallest $L_i$ value, and so on.
While intuitive, this may be sub-optimal; it ignores that if we cleverly partition the coordinates we may force the algorithm to often choose blocks with very-small Lipschitz constants (which lead to much more progress in decreasing the objective function). In our experiments, similar to the method of Thoppe et.\ al.\ for quadratics, we explore the simple strategy of \emph{sorting the $L_i$  values and partitioning this list into equal-sized blocks}. Although in the worst case this leads to iterations that are not very productive since they update all of the largest $L_i$ values, it also guarantees some very productive iterations that update none of the largest $L_i$ values and leads to better overall performance in our experiments.

\subsection{Newton Updates}
\label{sec:newton}

Choosing the  vector $d^k$ that we use to update the block $x_{b_k}$ would
 seem to be straightforward since in the previous section we derived the block selection rules in the context of specific block updates; the GSL rule is derived assuming a gradient update~\eqref{eq:1/L}, the GSQ rule is derived assuming a matrix update~\eqref{eq:soupdate}, and so on. However, using the update $d^k$ that leads to the selection rule can be highly sub-optimal. For example, we might make substantially more progress using the matrix update~\eqref{eq:soupdate} even if we choose the block $b_k$ based on the GSL rule. 
Indeed, given $b_k$ the matrix update makes the optimal amount of progress for quadratic functions, so in this case we should prefer the matrix update for all selection rules (including random and cyclic rules).

However, the matrix update in~\eqref{eq:soupdate} can itself be highly sub-optimal for non-quadratic functions as it employs an upper-bound $H_{b_k}$ on the sub-Hessian $\nabla_{b_kb_k}^2 f(x)$ that must hold for {\em  all} parameters $x$. For twice-differentiable non-quadratic functions, we could potentially make more progress by using classic Newton updates where we use the instantaneous Hessian $\nabla_{b_kb_k}^2 f(x^k)$ with respect to the block. 
Indeed, considering the extreme case where we have one block containing all the coordinates, Newton updates can lead to superlinear convergence~\citep{dennis1974characterization} while matrix updates destroy this property. That being said, we should not expect superlinear convergence of BCD methods with Newton or even optimal updates\footnote{Consider a 2-variable quadratic objective where we use single-coordinate updates. The optimal update (which is equivalent to the matrix/Newton update) is easy to compute, but if the quadratic is non-separable then the convergence rate of this approach is only linear.}. Nevertheless, in Section~\ref{sec:exactupdates} we show that for certain common problem structures it is possible to achieve superlinear convergence with Newton-style updates.

Fountoulakis \& Tappenden recently highlight this difference between using matrix updates and using Newton updates~\citep{fountoulakis2015flexible}, and propose a BCD method based on Newton updates. To guarantee progress when far from the solution classic Newton updates require safeguards like a line-search or trust-region~\citep{TsengYun2009,fountoulakis2015flexible}, but as we have discussed in this section line-searches tend not to add a significant cost to BCD methods. Thus, if we want to maximize the progress we make at each iteration we recommend to use one of the greedy rules to select the block to update, but then update the block using the Newton direction and a line-search. In our implementation, we used a backtracking line-search starting with $\alpha_k = 1$ and backtracking for the first time using quadratic Hermite polynomial interpolation and using cubic Hermite polynomial interpolation if we backtracked more than once (which rarely happened since $\alpha_k=1$ or the first backtrack were typically accepted)~\citep{wright1999numerical}.\footnote{We also explored a variant based on cubic regularization of Newton's method~\citep{nesterov2006} as in the work of~\citet{amaral2020complexity}, but were not able to obtain a significant performance gain with this approach.}

\section{Message-Passing for Huge-Block Updates}
\label{sec:messagepassing}

\citet{qu2015sdna} discuss how in some settings increasing the block size with matrix updates does not necessarily lead to a performance gain due to the higher iteration cost. In the case of Newton updates the additional cost of computing the sub-Hessian $\nabla_{bb}^2 f(x^k)$ may also be non-trivial. Thus, whether matrix and Newton updates will be beneficial over gradient updates will depend on the particular problem and the chosen block size. However, in this section we argue that in some cases matrix updates and Newton updates can be computed efficiently using huge blocks. \blu{We first discuss the cost of using Newton updates and standard approaches to reduce this cost}. \blu{We then show how, for problems with sparse dependencies between variables, we can in some cases choose the structure of the blocks in order to guarantee that the matrix/Newton update can be computed in linear time.}

\subsection{Cost of Computing Newton Updates}
\label{sec:newtonCost}

The cost of using Newton updates with the BCD method depends on two factors: (i) the cost of calculating the sub-Hessian $\nabla^2_{b_kb_k} f(x^k)$ and (ii) the cost of solving the corresponding linear system.  The cost of computing the sub-Hessian depends on the particular objective function we are minimizing. For the problems where coordinate descent is efficient (see Section~\ref{sec:problems}), it is typically substantially cheaper to compute the sub-Hessian for a block than to compute the full Hessian. Indeed, for many cases where we apply BCD, computing the sub-Hessian for a block is cheap due to the sparsity of the Hessian. For example, in the graph-structured problems $h_2$ the edges in the graph correspond to the non-zeroes in the Hessian. 

Although this sparsity and reduced problem size would seem to make BCD methods with exact Newton updates ideal, in the worst case the iteration cost would still be $O(|b_k|^3)$ using standard matrix factorization methods. A similar cost is needed using the matrix updates with fixed Hessian upper-bounds $H_b$ and for performing an optimal update in the special case of quadratic functions. In some settings we can reduce this to $O(|b_k|^2)$ by storing matrix factorizations, but this cost is still prohibitive if we want to use large blocks (we can use $|b_k|$ in the thousands, but not the millions). 

An alternative to computing the exact Newton update is to use an approximation to the Newton update that has a runtime dominated by the sparsity level of the sub-Hessian. For example, we could use conjugate gradient methods or use randomized Hessian approximations~\citep{dembo1982inexact,pilanci2015newton}. However, these approximations require setting an approximation accuracy and may be inaccurate if the sub-Hessian is not well-conditioned. 

\blu{In this section we consider an alternative to using conjugate gradient or randomized algorithms to exploit sparsity in the Hessian}: choosing blocks with a sparsity pattern that guarantees we can solve the resulting linear systems involving the sub-Hessian (or its approximation) in $O(|b_k|)$ using a ``message-passing'' algorithm.
If the sparsity pattern is favourable, this allows us to update huge blocks at each iteration using exact matrix updates or Newton updates (which are the optimal updates for quadratic problems). \blu{This idea has previously been explored in the context of linear programing relaxations of inference in graphical models~\citep{sontag2009tree}, but here we consider it for computing matrix/Newton updates and consider constructing the trees greedily.} In Section~\ref{subsec:solvingforests}, we first discuss how message passing can be used to solve forest-structured linear systems, and then in Section~\ref{subsec:solvingforestsBCD} we show how this can be used within BCD methods.

\subsection{Solving Forest-Structured Linear Systems}
\label{subsec:solvingforests}

\blu{Consider the classic problem of finding a solution $x\in\R^n$ to a square system of linear equations, $Ax = c$,
where $A \in \R^{n\times n}$ and $c \in \R^n$.
We can define a pairwise undirected graph $G = (V,E)$, where we have $n$ vertices in $V$ and the edges $E$ are the non-zero off-diagonal elements of $A$. Thus, if $A$ is diagonal then $G$ has no edges, if $A$ is dense then there are edges between all nodes ($G$ is fully-connected), if $A$ is tridiagonal then edges connect adjacent nodes ($G$ is a chain-
structured graph where $(1)-(2)-(3)-(4)-\dots$), and so on.}

\blu{We are interested in the special case where the graph $G$ forms a \emph{forest}, meaning that it has no cycles.\footnote{An undirected cycle is a sequence of adjacent nodes in $V$ starting and ending at the same node, where there are no repetitions of nodes or edges other than the final node.} In the special case of forest-structured graphs, we can solve the linear system $Ax=c$ in linear time using message passing~\citep{shental2008GaBP}. This is as oppposed to the cubic worst-case time required by typical matrix factorization implementations. In the forest-structured case, the message passing algorithm is equivalent to Gaussian elimination with a particular permutation  that guarantees that the amount of ``fill-in'' is  linear~\citep[Prop. 3.4.1]{bickson2009}. This idea of exploiting tree structures within Gaussian elimination dates back over 50 years~\citep{parter1961use}.}

\begin{algorithm}[!ht]
\caption{Message Passing for a Tree Graph} 
\label{alg:GaBP}
\begin{algorithmic}
\STATE{ 1. {\bf Initialize}: \\
	$\quad$ Input: vector $c$, forest-structured matrix $A$, and levels $L\{1\}, L\{2\}, \dots, L\{T\}$.\\
	$\quad$ \algorithmicfor{ $i = 1, 2, \dots, n$} \\
	$\quad \quad$ Set $P_{ii} \leftarrow A_{ii}$, $C_{i} \leftarrow c_i$.  \comm{$P, C$ track row operations} \bigskip \\
}
\STATE{ 2. {\bf Gaussian Elimination}: \\
	$\quad$ \algorithmicfor{ $t = T,T-1,\dots, 1$} \comm{start furthest from root}\smallskip \\
	$\quad \quad$ \algorithmicfor{ $i \in L\{t\}$} \\
	$\quad \quad \quad$ \algorithmicif{ $t > 1$}  \\
	$\quad \quad \quad \quad$ $J \leftarrow N\{i\} \backslash L\{1:t-1\}$ \comm{neighbours that are not parent node} \smallskip \\
	$\quad \quad \quad \quad$ \algorithmicif{ $J = \emptyset$} \comm{$i$ corresponds to a leaf node} \smallskip \\
	$\quad \quad \quad \quad \quad$ {\bf continue} \comm{no updates} \smallskip \\
	$\quad \quad \quad$ \algorithmicelse{} \\
	$\quad \quad \quad \quad$ $J \leftarrow N\{i\}$ \comm{root node has no parent node}  \smallskip \\
	$\quad \quad \quad P_{Ji} \leftarrow A_{Ji}$ \comm{initialize off-diagonal elements}\\
	$\quad \quad \quad P_{ii} \leftarrow P_{ii} - \displaystyle \sum_{j \in J} \frac{P_{ji}^2}{P_{jj}}$ \comm{update diagonal elements of $P$ in $L\{t\}$}\\
	$\quad \quad \quad C_{i} \leftarrow C_{i} - \displaystyle \sum_{j \in J} \frac{P_{ji}}{P_{jj}} \cdot C_j$ \\
}
\STATE{ 3. {\bf Backward Solve}: \\
	$\quad$ \algorithmicfor{ $t = 1, 2, \dots, T$} \comm{start with root node}\smallskip \\
	$\quad \quad$ \algorithmicfor{ $i \in L\{t\}$} \smallskip \\
	$\quad \quad \quad$ \algorithmicif{ $t < T$}  \\
	$\quad \quad \quad \quad$ $p \leftarrow N\{i\} \backslash L\{t+1: T\}$ \comm{parent node of $i$ (empty for $t = 1$)} \smallskip \\
	$\quad \quad \quad$ \algorithmicelse{} \\
	$\quad \quad \quad \quad$ $p \leftarrow N\{i\}$ \comm{only neighbour of leaf node is parent}  \smallskip \\ 
	$\quad \quad \quad$ $x_{i} \leftarrow \displaystyle \frac{C_i - A_{i p}\cdot x_{p}}{P_{ii}}$ \comm{solution to $Ax = c$}
} 
\end{algorithmic}
\end{algorithm}

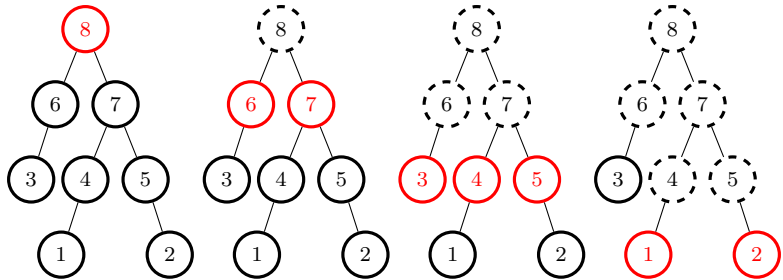
\begin{figure}[!ht]
\centering
\begin{tikzpicture}
\node [arn_r] {8}
		child{ node [arn_n] {6}
			child[left]{ node [arn_n] {3}}
		}
    		child{ node [arn_n] {7} 
            		child{ node [arn_n] {4} 
            			child[left]{ node [arn_n] {1}} 
            		}
            		child{ node [arn_n] {5}
				child[right]{ node [arn_n] {2}}
			}                      
    		}
; 
\end{tikzpicture}
\begin{tikzpicture}
\node [arn_x] {8}
		child{ node [arn_r] {6}
			child[left]{ node [arn_n] {3}}
		}
    		child{ node [arn_r] {7} 
            		child{ node [arn_n] {4} 
            			child[left]{ node [arn_n] {1}} 
            		}
            		child{ node [arn_n] {5}
				child[right]{ node [arn_n] {2}}
			}                      
    		}
; 
\end{tikzpicture}
\begin{tikzpicture}
\node [arn_x] {8}
		child{ node [arn_x] {6}
			child[left]{ node [arn_r] {3}}
		}
    		child{ node [arn_x] {7} 
            		child{ node [arn_r] {4} 
            			child[left]{ node [arn_n] {1}} 
            		}
            		child{ node [arn_r] {5}
				child[right]{ node [arn_n] {2}}
			}                      
    		}
; 
\end{tikzpicture}
\begin{tikzpicture}
\node [arn_x] {8}
		child{ node [arn_x] {6}
			child[left]{ node [arn_n] {3}}
		}
    		child{ node [arn_x] {7} 
            		child{ node [arn_x] {4} 
            			child[left]{ node [arn_r] {1}} 
            		}
            		child{ node [arn_x] {5}
				child[right]{ node [arn_r] {2}}
			}                      
    		}
;
\end{tikzpicture}
\caption{Process of partitioning nodes into level sets. In the above graph, we have the following {\color{blue}level} sets: $L\{1\} = \{ 8 \}, L\{2\} = \{6,7\}, L\{3\} = \{3,4,5\}$, $L\{4\} = \{1,2\}$.}
\label{fig:levelsets}
\end{figure}

\begin{figure}[!ht]
\begin{center}
\includegraphics[scale=0.39]{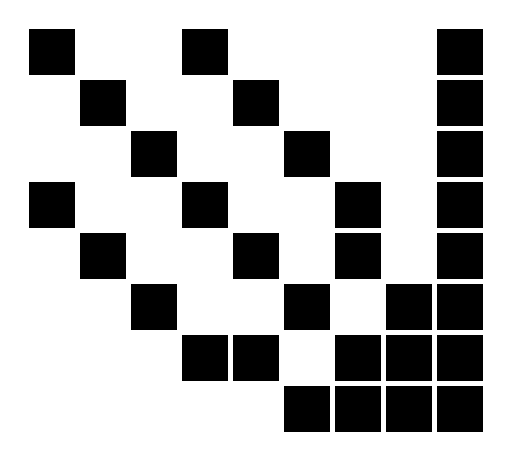} \hspace{0.8em}
\includegraphics[scale=0.39]{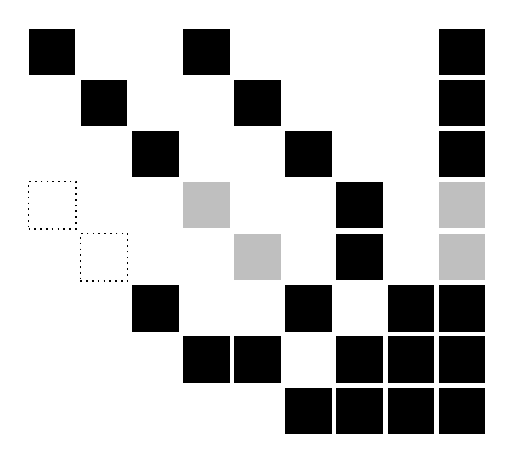} \hspace{0.8em}
\includegraphics[scale=0.39]{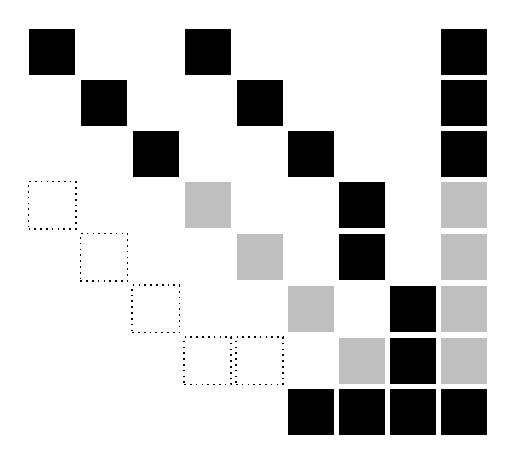} \hspace{0.8em}
\includegraphics[scale=0.39]{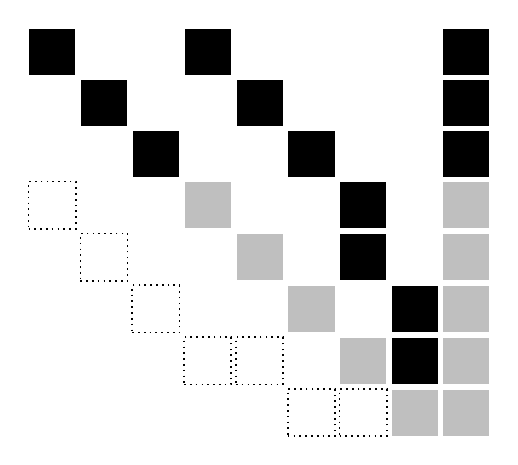}
\end{center}
\caption{Illustration of Step 2 (row-reduction process) of Algorithm~\ref{alg:GaBP} for the tree in Figure \ref{fig:levelsets}. The matrix represents $[A \; | \; c]$. The black squares represent unchanged non-zero values of $A$ and the grey squares represent non-zero values that are updated at some iteration in Step 2. In the final matrix (far right), the values in the last column are the values assigned to the vector $C$ in Steps 1 and 2 above, while the remaining columns that form an upper triangular matrix are the values corresponding to the constructed $P$ matrix. The backward solve of Step 3 solves the linear system.}
\label{fig:rowReduce}
\end{figure}

To illustrate the message-passing algorithm in the terminology of Gaussian elimination, 
we first need to divide the nodes $\{1, 2, \dots, n\}$ in the forest into sets $L\{1\}$, $L\{2\}, \dots, L\{T\}$, where $L\{1\}$ is an arbitrary node in the graph $G$ selected to be the ``root'' node, $L\{2\}$ is the set of all neighbours of the ``root'' node, $L\{3\}$ is the set of all neighbours of the nodes in $L\{2 \}$ excluding parent nodes (nodes in $L\{1\!:\!2\}$), and so on until all nodes are assigned to a set (if the forest is made of disconnected trees, we will need to do this for each tree). An example of this process is depicted in Figure~\ref{fig:levelsets}.
Once these sets are initialized, we start with the nodes furthest from the root node(s) $L\{T\}$, and carry out the row operations of Gaussian elimination moving towards the root. Then we use backward substitution to solve the system $Ax = c$. We outline the full procedure in Algorithm~\ref{alg:GaBP} and illustrate the algorithm on a particular example in Figure~\ref{fig:rowReduce}.

\subsection{Forest-Structured BCD Updates}
\label{subsec:solvingforestsBCD}



\blu{To illustrate how being able to solve forest-structured linear systems in linear time can be used within BCD methods, consider  the basic quadratic minimization problem
\[
	\argmin{x \in \R^n} \frac{1}{2} x^T A x - c^T x,
\]
where we assume the matrix $A \in \R^{n \times n}$ is positive-definite and sparse. By excluding terms not depending on the coordinates in the block, the optimal update for block $b$ is given by the solution to the linear system
\begin{equation}
\label{eq:messageProb}
A_{bb}x_b = \tilde{c},
\end{equation}
where $A_{bb} \in \R^{|b| \times |b|}$ is the submatrix of $A$ corresponding to block $b$, and $\tilde{c} = c_b - A_{b\bar{b}}x_{\bar{b}}$ is a vector with $\bar{b}$ defined as the complement of $b$ and $A_{b\bar{b}} \in \R^{|b| \times |\bar{b}|}$ is the submatrix of $A$ with rows from $b$ and columns from $\bar{b}$. We note that in practice efficient BCD methods will typically track $Ax$ (to implement the iterations efficiently) so computing $\tilde{c}$ is efficient. }



\blu{The graph obtained from the sub-matrix $A_{bb}$ is called the \emph{induced subgraph} $G_b$ of the original graph $G$. Specifically, the nodes $V_b \in G_b$ are the coordinates in the set $b$, while the edges $E_b \in G_b$ are all edges $(i,j) \in E$ where $i,j \in V_b$ (edges between nodes in $b$). Even if the original problem is not a forest-structured graph, we can \emph{choose the block $b$ so that the induced subgraph $G_b$ is forest-structured}. Using blocks constructed in this way, we can compute the optimal update~\eqref{eq:messageProb}
 in linear time using message passing
 as described in the previous seciton. }
 
\blu{  For non-quadratic problems, the optimal update is not the solution of a linear system. Nevertheless, we can choose blocks so that the induced subgraph is forest-structured and  use message-passing to efficiently compute matrix updates (which leads to a linear system involving $H_b$) or Newton updates (which leads to a linear system involving the sub-Hessian). We note that similar ideas have recently been explored by~\citet{srinivasan2015} for Newton methods. Their \emph{graphical Newton} algorithm can solve the Newton system in $O(t^3)$ times the size of $G$, where $t$ is the ``treewidth'' of the graph ($t=1$ and the graph size is at most linear for forests). However, the tree-width of $G$ is usually large while it is more reasonable to assume that we can find low-treewidth induced subgraphs $G_b$.\footnote{\blu{\citet{srinivasan2015} actually  show a more general result depending on the width of the Hessian's computation graph rather than simply its sparsity structure. However, this quantity is also typically non-trivial for the Hessian with respect to all variables, and it is not immediately apparent in general how one would choose blocks to yield a computation graph with a low treewidth.}}}
 
\begin{figure}[!ht]
\centering
\subfloat[Red-black, $|b| = n/2$.]{\label{fig:RB}\includegraphics[width=0.333\textwidth]{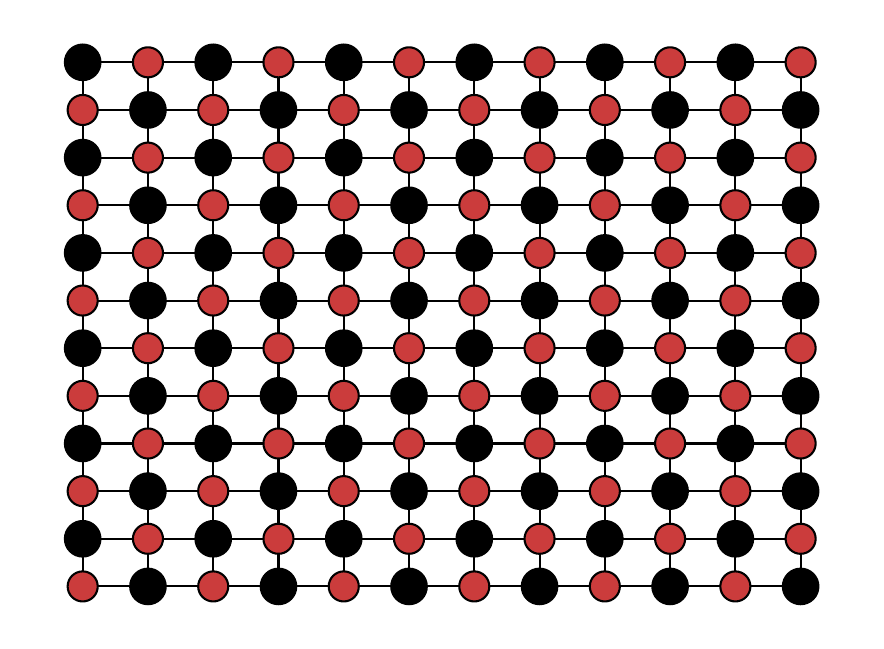}}
\subfloat[Fixed block, $|b| = n/2$.]{\label{fig:FT}\includegraphics[width=0.333\textwidth]{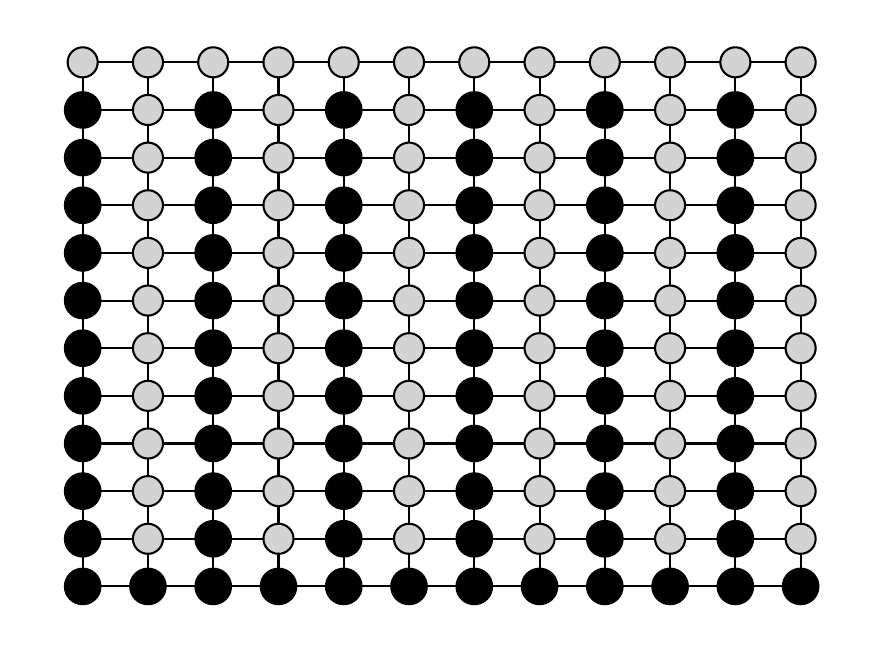}}
\subfloat[Variable block, $|b| \approx 2n/3$.]{\label{fig:VT}\includegraphics[width=0.333\textwidth]{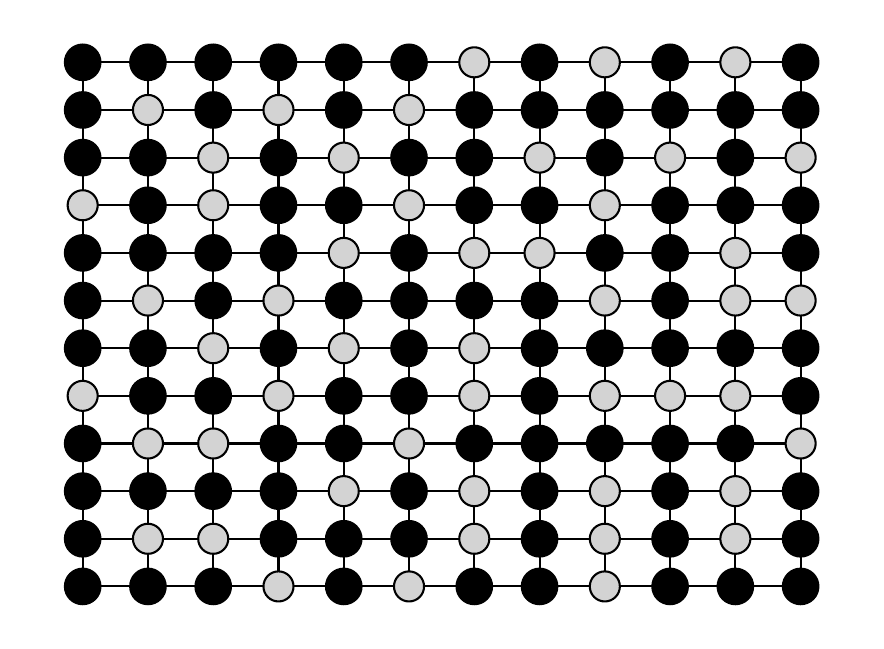}}
\caption{Partitioning strategies for defining forest-structured blocks.}
\label{fig:lattice}
\end{figure}

\blu{Whether or not message-passing is useful will depend on the sparsity pattern of $A$. For example, if $A$ is dense then we may only be able to choose forest-structured induced-subgraphs of size 2. On the other hand, if $A$ is diagonal then this would correspond to a disconnected graph, which is clearly a forest and would allow an optimal BCD update with a block size of $n$ (so we would solve a quadratic problem in 1 iteration). As an example between these extremes, consider a quadratic function with a lattice-structured non-zero pattern as in Figure~\ref{fig:lattice}. This graph is a bipartite graph (``two-colourable''), and a classic fixed partitioning strategy for problems with this common structure is to use a ``red-black ordering'' (see Figure~\ref{fig:RB}). Choosing this colouring makes the matrix $A_{bb}$ diagonal when we update the red nodes, allowing us to solve~\eqref{eq:messageProb} in linear time  (and similarly for the black nodes). So this colouring scheme allows an $O(n)$-time optimal BCD update with a block size of $n/2$, rather than the $O(n^3)$-time required for the optimal update of an arbitrary block of size $n/2$.}

Message passing allows us to go beyond the red-black colouring, and instead update \emph{any} forest-structured block of size $\tau$ in linear time. For example, the partition given in Figure~\ref{fig:FT} also has blocks of size $n/2$, but these blocks include dependencies. Our experiments indicate that blocks that maintain dependencies, such as Figure~\ref{fig:FT}, make substantially more progress than using the red-black blocks or using small non-forest-structured blocks \blu{(though red-black blocks are very well-suited for parallelization)}.

Alternatively, we can consider variable blocks with forest-structured induced subgraphs, where the block size may vary at each iteration, but restricting to forests still leads to a linear-time update. As seen in Figure~\ref{fig:VT}, by allowing variable block sizes we can select a forest-structured block of size $|b| \approx 2n/3$ in one iteration (black nodes) while still maintaining a linear-time update. If we further sample different random forests or blocks at each iteration, then the convergence rate under this strategy is covered by the arbitrary sampling theory~\citep{zheng2014quartz}. Also note that the maximum of the gradient norms over all forests defines a valid norm, so our analysis of Gauss-Southwell can be applied to this case.

\subsection{\blu{From Red-Black to Multi-Colourings}}
\label{sec:redblack}

\blu{ The adjacency matrix in the red-black example is called ``consistently ordered'' in the numerical linear algebra literature, meaning that the nodes can be partitioned into sets such that any two adjacent nodes belong to different sets~\citep[Def. 5.3.2]{young1971}. A matrix being consistently-ordered is equivalent to the graph being  ``two-colourable'' in the language of graph theory: each node can be assigned one of two colours, such that no adjacent nodes receive the same colour.}
We can generalize the red-black approach to arbitrary graphs by defining our blocks such that no two neighbours are in the same block. While for lattice-structured graphs we only need two blocks to do this, for general graphs we may need a larger number of blocks. \blu{This is called a multi-colouring in the numerical linear algebra literature~\citep[\S 12.4]{harrar1993orderings,saad2003iterative}, and may lead to blocks of different sizes.
 If a graph is $\nu$-colourable, then we can arrange the adjacency matrix such that it has $\nu$ diagonal blocks along its diagonal (the off-diagonal blocks may be dense). In relation to BCD methods, this means that $A_{bb}$ is diagonal for each block and we can update each of the $\nu$ blocks in linear time in the size of the block.}  

Finding the minimum number of ``colours'' (number of blocks) for a given graph is exactly the graph colouring problem, which is NP-hard~\citep{garey1979}. However, there are various heuristics that quickly give a (potentially non-minimal) valid colouring of the nodes~(\blu{for a survey of heuristic, meta-heuristic, and hybrid methods for graph colouring, see~\citet{baghel2013}}).  For example, in our experiments we used the following classic greedy algorithm~\citep{welsh1967upper}:
\begin{enumerate}
\item Proceed through the vertices of the graph in some order $i=1,2,\dots,n$.
\item For each vertex $i$, assign it the smallest positive integer (``colour'') such that it does not have the same colour as any of its neighbours among the vertices $\{1,2,\dots,i-1\}$.
\end{enumerate}
We can use all vertices assigned to the same integer as our blocks in the algorithm, and if we apply this algorithm to a lattice-structured graph (using row- or column-ordering of the nodes) then we obtain the classic red-black colouring of the graph.

\subsection{Partitioning into Forest-Structured Blocks}
\label{sec:treePart}

Instead of disconnected blocks, in this work we instead consider forest-structured blocks. 
The size of the largest possible forest is also related to the graph colouring problem~\citep{esperet2015}, and we can consider a slight variation on the second step of the greedy colouring algorithm to find a set of forest-structured blocks:
\begin{enumerate}
\item Proceed through the vertices of the graph in some order $i=1,2,\dots,n$.
\item For each vertex $i$, assign it the smallest positive integer (``forest'') such that the nodes assigned to that integer among the set $\{1,2,\dots,i\}$ form a forest.
\end{enumerate}
If we apply this to a lattice-structured graph (in column-ordering), this generates a partition into two forest-structured graphs similar to the one in Figure~\ref{fig:FT} (only the bottom row is different). This procedure requires us to be able to  test whether adding a node to a forest maintains the forest property, and we show how to do this efficiently in~\ref{app:forest}.

In the case of lattice-structured graphs there is a natural ordering of the vertices, but for many graphs there is no natural ordering. In such cases we might simply consider a random ordering. Alternately, if we know the individual Lipschitz constants $L_i$, we could order by these values (with the largest $L_i$ going first so that they are likely assigned to the same block if possible). In our experiments we found that this ordering improved performance for an unstructured dataset, and performed similarly to using the natural ordering in a lattice-structured dataset.

\subsection{Approximate Greedy Rules with Forest-Structured Blocks}
\label{sec:treeApprox}

Similar to the problems of the previous section, computing the Gauss-Southwell rule over forest-structured variable blocks is NP-hard, as we can reduce the 3-satisfiability problem to the problem of finding a maximum-weight forest~\citep{garey1979}. However, we use a similar greedy method to approximate the greedy Gauss-Southwell rule over the set of trees:
\begin{enumerate}
\item Initialize $b_k$ with the node $i$ corresponding to the largest gradient, $|\nabla_i f(x^k)|$.
\item Search for the node $i$ with the largest gradient that is not part of $b_k$ and that maintains that $b_k$ is a forest.
\item If such a node is found, add it to $b_k$ and go back to step 2. Otherwise, stop.
\end{enumerate}
Although this procedure does not yield the exact solution in general, it is appealing since (i) the procedure is efficient as it is easy to test whether adding a node maintains the forest property (see \ref{app:forest}), (ii) it outputs a forest so that the subsequent update is linear-time, (iii) we are guaranteed that the coordinate corresponding to the variable with the largest gradient is included in $b_k$ (guaranteeing convergence), 
and (iv) we cannot add any additional node to the final forest and still maintain the forest property. A similar heuristic can be used to approximate the GSD rule under the restriction from Section~\ref{subsec:adaptive} or to generate a forest randomly.

\section{Manifold Identification}
\label{sec:exactupdates}

\blu{
The previous section advocated the use of Newton updates over matrix updates, where feasible. We usually associate Newton updates  with superlinear convergence as opposed to linear convergence. However, updating blocks of variables destroys the possibility for superlinear convergence even in settings where Newton's method would achieve superlinear convergence. In this section, we consider non-smooth problems where in some settings an appropriate block selection strategy combined with Newton updates of the blocks does lead to superlinear convergence.}

In paritcular, we consider optimization problems of the form
\begin{equation}
\label{eq:compositeproblem}
	\argmin{x \in \R^n} f(x) + \sum_{i=1}^n g_i (x_i),
\end{equation}
where $\nabla f$ is Lipschitz-continuous and 
each $g_i$ only needs to be convex and  lower semi-continuous (it may be non-smooth or infinite at some $x_i$).
A classic example of a problem in this framework is optimization subject to non-negative constraints,
\begin{equation}\label{eq:bounds}
\argmin{x \geq 0} f(x),
\end{equation}
where in this case $g_i$ is the indicator function on the non-negative orthant,
\[
	g_i(x_i)
	= \begin{cases} 0 & \text{if }x_i \ge 0, \\ \infty & \text{if }x_i < 0. \end{cases}
\]
Another example that has received significant recent attention is the case of an L1-regularizer,
\begin{equation}
\label{eq:L1}
\argmin{x \in \R^n} f(x) + \lambda \norm{x}_1,
\end{equation}
where in this case $g_i = \lambda|x_i|$. Here, the L1-norm regularizer is used to encourage sparsity in the solution.
A related problem is the group L1-regularization problem~\eqref{eq:groupL1}, where instead of being separable, $g$ is block-separable.

Proximal gradient methods have become one of the default strategies for solving problem~\eqref{eq:compositeproblem}, and a BCD variant of these methods has an update of the form
\begin{equation}\label{eq:update1}
x^{k+1} = \prox{\alpha_k g_{b_k}}\left[x^k - \alpha_k U_{b_k}\nabla_{b_k} f(x^k)\right],
\end{equation}
where for any block $b$ and step-size $\alpha$ the proximal operator is defined by the separable optimization
\[
\prox{\alpha g_b}[x] = \argmin{y\in\mathbb{R}^n} \frac{1}{2\alpha} \|y-x \|^2 +  \sum_{i \in b}g_i(y_i).
\]
We note that all variables not in block $b$ stay at their existing values in the optimal solution to this problem. In the special case of non-negative constraints like~\eqref{eq:bounds} the update in \eqref{eq:update1} is given by
\[
x_i^{k+1} = \left[x_i - \frac{1}{L}\nabla_{i} f(x^k)\right]^+,
\]
where $[\beta]^+ = \max\{0,\beta\}$ projects onto the non-negative orthant. 
For L1-regularization problems~\eqref{eq:L1} the update reduces to an element-wise soft-thresholding step, 
\begin{equation}
\label{update:proxL1}
\begin{aligned}
x_{i}^{k+\half} & = x^k - \frac{1}{L}\nabla_{i}f(x^k),\\
x_{i}^{k+1} & = \blu{\text{sgn}(x_{i}^{k+\half})}\left[\left|x_i^{k+\half}\right| - \frac{\lambda}{L}\right]^+,
\end{aligned}
\end{equation}
which we have written as a gradient update followed by the soft-threshold operator. \blu{Here, the signum function sgn$(x_i)$ returns $1$ if $x_i$ is positive, returns -1 if it is negative, and can return any number between $-1$ and $1$ if $x_i$ is zero.}

Most of the issues discussed in the previous sections for smooth BCD methods carry over in a straightforward way to this proximal setting; we can still consider fixed or variable blocks, there exist matrix and Newton updates, and we can still consider cyclic, random, or greedy selection rules. One subtle issue is that there are many generalizations of the Gauss-Southwell rule to the proximal setting~\citep{nutini2015}. However, the GS-$q$ rule defined by~\citet{tseng2009} seems to be the generalization of GS with the best theoretical properties. A GSL variant of this rule would take the form
\begin{equation}\label{eq:GSq}
b_k \in \argmin{b \in \B}\left\{\min_d\left\{ \langle \nabla_{b} f(x^k), d\rangle + \frac{L_{b}}{2}\|d\|^2  + \sum_{i \in b}g_i(x_i + d_i) - \sum_{i\in b}g_i(x_i)\right\}\right\},
\end{equation}
where we assume that the gradient of $f$ is $L_b$-Lipschitz continuous with respect to block $b$. A generalization of the GS rule is obtained if we assume that the $L_b$ are equal across all blocks. 
\blu{We can similarly define a GSD-$q$ and GSQ-$q$ rule by using
\begin{equation}\label{eq:GSQq}
b_k \in \argmin{b \in \B}\left\{\min_d\left\{ \langle \nabla_{b} f(x^k), d\rangle + \frac{1}{2}\|d\|_{H_b}^2  + \sum_{i \in b}g_i(x_i + d_i) - \sum_{i\in b}g_i(x_i)\right\}\right\},
\end{equation}
where the gradient of block $b$ is $1-$Lipschitz continuous with respect to the quadratic norm defined by the matrix $H_b$ (GSD-$q$ would correspond to the special case where the $H_b$ are diagonal). Convergence of this method under standard assumptions follows existing arguments~\citep{TsengYun2009,tseng2009,csiba2017global}.}

It has been established that coordinate descent and BCD methods based on the update~\eqref{eq:update1} for problem~\eqref{eq:compositeproblem} obtain similar convergence rates to the case where we do not have a non-smooth term $g$~\citep{nesterov2012efficiency,richtarik2011,nutini2015}. 
The focus of this section is to show that \emph{the non-smoothness of $g$ can actually lead to a faster convergence rate}. 

This idea dates back at least 40 years to the work of~\citet{bertsekas1976goldstein}.\footnote{A similar property was shown for proximal point methods in a more general setting around the same time,~\citep{rockafellar1976}.} For the case of non-negative constraints, he shows that the sparsity pattern of $x^k$ generated by the projected-gradient method matches the sparsity pattern of the solution $x^*$ for all sufficiently large $k$. Thus, after a finite number of iterations the projected-gradient method will ``identify'' the final set of non-zero variables. Once these values are identified,
 Bertsekas suggests that we can fix the zero-valued variables and apply an unconstrained Newton update to the set of non-zero variables to obtain superlinear convergence. Even without switching to a superlinearly-convergent method, the convergence rate of the projected-gradient method can be faster once the set of non-zeroes is identified since it is effectively optimizing in the lower-dimensional space corresponding to the non-zero variables.
 
This idea of identifying a smooth ``manifold'' containing the solution $x^*$ has been generalized to allow polyhedral constraints~\citep{burke1988identification}, general convex constraints~\citep{wright1993identifiable}, and even non-convex constraints~\citep{hare2004}. Similar results exist in the proximal gradient setting. For example, it has been shown that the proximal gradient method identifies the sparsity pattern in the solution of L1-regularized problems after a finite number of iterations~\citep{hare2011}. The active-set identification property has also been shown for other algorithms like certain coordinate descent and stochastic gradient methods~\citep{mifflin2002,wright2012,lee2012manifold}. Specifically, Wright shows that BCD also has this manifold identification property for separable $g$~\citep{wright2012}, provided that the coordinates are chosen in an essentially-cyclic way (or provided that we can simultaneously choose to update all variables that do not lie on the manifold). Wright also shows that superlinear convergence is possible if we use a Newton update on the manifold, assuming the Newton update does not leave the manifold.

In the next subsection, we present a manifold identification result for proximal coordinate descent for general separable $g$. We follow a similar argument to~\citet{bertsekas1976goldstein}, which yields a simple proof that holds for many possible selection rules including greedy rules (which may not be essentially-cyclic). Further, our argument leads to bounds on the {\em active-set complexity} of the method, which is the number of iterations required to reach the manifold~\citep{liang2017activity,nutini2017manifold}. As examples, we consider problems~\eqref{eq:bounds} and~\eqref{eq:L1}, and show explicit bounds for the active-set complexity. We subsequently show how to generalize this argument to cases like block updates and Newton updates. The latter naturally leads to superlinear convergence of greedy BCD methods when using variable blocks of size larger than the dimension of the manifold.

\subsection{Manifold Identification for Separable $g$}
\label{subsec:mani}

Assume the subdifferential of $g$ is nonempty for all $x \in\! \dom g$. By our separability assumption on $g$, the subdifferential of $g$ can be expressed as the concatenation of the individual subdifferential of each $g_i$, where the subdifferential of $g_i$ at any $x_i \in \R$ is defined by
\[
	\partial g_i(x_i) = \{ v \in \R : g_i(y) \ge g_i(x_i) + v \cdot (y-x_i), \text{ for all $y \in \!\dom{g_i}$}\}.
\]
This implies that the subdifferential of each $g_i$ is just an interval on the real line. In particular, the interior of the subdifferential of each $g_i$ at a non-differentiable point $x_i$ can be written as an open interval,
\begin{equation}
\label{eq:subinterior}
	 \interior \partial g_i(x_i) \equiv (l_i, u_i ),
\end{equation}
where $l_i \in \R \cup \{-\infty\}$ and $u_i \in \R \cup \{\infty\}$ (the $\infty$ values occur if $x_i$ is at its lower or upper bound, respectively). The {\em active-set} at a solution $x^*$ for a separable $g$ is then defined by
\[
	\mathcal{Z} = \{ i : \partial g_i(x_i^*) \text{ is not a singleton} \}.
\]
By \eqref{eq:subinterior}, the set $\mathcal{Z}$ includes indices $i$ where $x_i^*$ is equal to the lower bound on $x_i$, is equal to the upper bound on $x_i$, or occurs at a non-smooth value of $g_i$. In our examples of non-negative constraints or L1-regularization, $\mathcal{Z}$ is the set of coordinates that are zero at the solution $x^*$.
With this definition, we can formally define the manifold identification property.
\begin{definition}
The manifold identification property for problem~\eqref{eq:compositeproblem} is satisfied if for all sufficiently large $k$, we have that $x_i^k = x_i^*$ for some solution $x^*$ for all $i \in \mathcal{Z}$.
\end{definition}

In order to prove the manifold identification property for the proximal coordinate descent method, in addition to assuming that $\nabla f$ is $L$-Lipschitz continuous, we require two assumptions. Our first assumption is that the iterates of the algorithm converge to a solution $x^*$.
\begin{assumption}
\label{assump2}
The iterates of the proximal coordinate descent method converge to an optimal solution $x^*$ of problem~\eqref{eq:compositeproblem}, that is $x^k \to x^*$ as $k \to \infty$.
\end{assumption}
This assumption holds if $f$ is strongly-convex and if we use cyclic or greedy selection~(see~\ref{append:strong}), but will also hold under a variety of other scenarios. Our second assumption is a {\em nondegeneracy} condition on the solution $x^*$ that the algorithm converges to. Below we write the standard nondegeneracy condition from the literature for the special case of~\eqref{eq:compositeproblem}.
\begin{assumption} 
\label{assump1} 
We say that $x^*$ is a nondegenerate solution for problem~ \eqref{eq:compositeproblem} if it holds that
\[
\begin{cases}
	-\nabla_i f(x^*) =  \nabla_i g(x^*_i) &\text{if $\partial g_i(x_i^*)$ is a singleton ($g_i$ is smooth at $x_i^*$)} \\
	-\nabla_i f(x^*) \in  \interior \partial g_i(x^*_i) &\text{if $\partial g_i(x_i^*)$ is not a singleton ($g_i$ is non-smooth at $x_i^*$)}.
\end{cases}
\]
\end{assumption}
This condition states that $-\nabla f(x^*)$ must be in the ``relative interior"~\citep[see][Section 2.1.3]{boyd2004convex} of the subdifferential of $g$ at the solution $x^*$. In the case of the non-negative bound constrained problem~\eqref{eq:bounds}, this requires that $\nabla_i f(x^*) > 0$ for all variables $i$ that are zero at the solution ($x_i^* = 0$). For the L1-regularization problem~\eqref{eq:L1}, this requires that $|\nabla_i f(x^*)| < \lambda$ for all variables $i$ that are zero at the solution.\footnote{Note that $|\nabla_i f(x^*)| \leq \lambda$ for all $i$ with $x_i^* = 0$ follows from the optimality conditions, so this assumption simply rules out the case where $|\nabla_i f(x_i^*)|=\lambda$. We note that in this case the nondegeneracy condition is a strict complementarity condition~\citep{santis2015}.}

There are three results that we require in order to prove the manifold identification property. The first result follows directly from Assumption~\ref{assump2} and establishes that for any $\beta > 0$ there exists a finite iteration $\bar{k}$ such that the distance from the iterate $x^{k}$ to the solution $x^*$ for all iterations $k \ge \bar{k}$ is bounded above by $\beta$. 

\begin{lemma}
\label{lem:manifold1}
Let Assumption~\ref{assump2} hold. For any $\beta$, there exists some minimum finite $\bar{k}$ such that $\| x^k - x^* \| \le \beta$ for all $k \geq \bar{k}$.
\end{lemma}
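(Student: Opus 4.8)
The plan is to derive this directly from the definition of convergence of a sequence. Assumption~\ref{assump2} states that $x^k \to x^*$, which by definition means that for every $\varepsilon > 0$ there exists an index $N$ such that $\|x^k - x^*\| \le \varepsilon$ for all $k \ge N$. So the statement is essentially a restatement of convergence, with the extra observation that among all indices $N$ that work for a given $\beta$, there is a smallest one (since the set of valid indices is a nonempty set of natural numbers, hence has a least element by well-ordering).

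Concretely, I would proceed as follows. First, fix an arbitrary $\beta > 0$. By Assumption~\ref{assump2} and the definition of sequence convergence, there exists at least one natural number $N$ such that $\|x^k - x^*\| \le \beta$ holds for all $k \ge N$. Second, consider the set $S = \{ N \in \mathbb{N} : \|x^k - x^*\| \le \beta \text{ for all } k \ge N \}$. This set is nonempty by the first step, and it is a subset of the natural numbers, so by the well-ordering principle it has a minimum element; call it $\bar{k}$. By construction $\bar{k}$ is finite and $\|x^k - x^*\| \le \beta$ for all $k \ge \bar{k}$, which is exactly the claim.

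There is essentially no main obstacle here: this lemma is a bookkeeping step that names the iteration index $\bar{k}$ beyond which the iterates are within $\beta$ of $x^*$, so that later arguments (establishing the manifold identification property and the active-set complexity bound) can refer to it. The only thing to be slightly careful about is that the lemma asserts the \emph{minimum} such $\bar{k}$ exists and is finite, but this is immediate from well-ordering of $\mathbb{N}$ together with the fact that the set of valid indices is nonempty. If one wanted a ``quantitative'' version, one could note that $\bar{k}$ depends on $\beta$ (and on the rate of convergence of the sequence), but for the purposes of this lemma no such dependence needs to be made explicit — it is enough that $\bar{k}$ is finite for each fixed $\beta$.
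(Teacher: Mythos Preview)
Your proposal is correct and matches the paper's approach: the paper states that this result ``follows directly from Assumption~\ref{assump2}'' and does not give an explicit proof, since it is indeed just the definition of convergence together with well-ordering to extract the minimal index. There is nothing to add.
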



The second result we require is that for any $i \in \mathcal{Z}$ such that $x_i^{k} \not = x_i^*$, eventually coordinate $i$ is selected at some finite iteration.

\begin{lemma}
\label{lem:manifold2}
Let Assumption~\ref{assump2} hold. If $x_i^k \not = x^*_i$ for some $i \in \mathcal{Z}$, then coordinate $i$ will be selected by the proximal coordinate descent method after a finite number of iterations.
\end{lemma}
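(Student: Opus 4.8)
The plan is to argue by contradiction, using only the defining property of (proximal) block coordinate descent — that coordinates outside the selected block are left unchanged — together with the convergence guarantee of Assumption~\ref{assump2}. Nothing about the specific greedy rules of Section~\ref{sec:gsl} will be needed, so the argument applies to cyclic, random, and greedy selection alike.

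First I would suppose, for contradiction, that after iteration $k$ coordinate $i$ is never selected; that is, $i \notin b_j$ for every $j \geq k$. Since the BCD update $x^{j+1} = \prox{\alpha_j g_{b_j}}[x^j - \alpha_j U_{b_j}\nabla_{b_j} f(x^j)]$ modifies only the entries indexed by $b_j$ and keeps every other entry at its previous value, this immediately gives $x_i^j = x_i^k$ for all $j \geq k$: the coordinate is ``frozen'' from iteration $k$ onward.

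Next I would invoke Assumption~\ref{assump2}: since $x^j \to x^*$ as $j \to \infty$, in particular the $i$-th component converges, $x_i^j \to x_i^*$. Combining this with the previous step yields $x_i^k = \lim_{j \to \infty} x_i^j = x_i^*$, contradicting the hypothesis $x_i^k \neq x_i^*$. Hence coordinate $i$ must belong to $b_j$ for some finite $j \geq k$, which is exactly the claim.

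I do not anticipate a genuine obstacle here; the only points requiring careful phrasing are (i) that ``selected'' means $i \in b_j$ and carries no requirement that the proximal step actually change $x_i$, so the statement to be refuted is precisely ``coordinate $i$ is frozen forever after $k$'', and (ii) that the same reasoning transfers verbatim to the block and Newton-update settings discussed later, where a block is frozen exactly when none of its coordinates is ever touched again.
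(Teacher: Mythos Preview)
Your argument is correct and is essentially the same as the paper's: both proceed by contradiction, freeze coordinate $i$ from some iteration onward, and then use Assumption~\ref{assump2} to force $x_i^k = x_i^*$. The only cosmetic difference is that the paper writes the contradiction via the inequality $|x_i^{k'} - x_i^*| \le \norm{x^k - x^*} \to 0$, whereas you invoke componentwise convergence directly.
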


\begin{proof}
For eventual contradiction, suppose we did not select such an $i$ after iteration $k'$. Then for all $k \geq k'$ we have that
\begin{equation}
\label{eq:convToZero}
|x_i^{k'} - x_i^*| = |x_i^k - x_i^*| \leq \norm{x^k - x^*}.
\end{equation}
By Assumption~\ref{assump2} the right-hand side is converging to 0, so it will eventually be less than $|x_i^{k'} - x^*_i|$ for some $k \ge k'$, contradicting the inequality. Thus after a finite number of iterations we must have that $x_i^k \neq x_i^{k'}$, which can only be achieved by selecting $i$.
\end{proof}

The third result we require is that once Lemma~\ref{lem:manifold1} is satisfied for some finite $\bar{k}$ and a particular $\beta > 0$, then for any coordinate $i \in \mathcal{Z}$ selected at some iteration $k' \ge \bar{k}$ by the proximal coordinate descent method, we have $x^{k'}_i = x^*_i$. We prove that this happens for a value $\beta$ depending on a quantity $\delta$ defined by
\begin{equation}
\label{eq:Delta}
	 \delta =  \min_{i\in\mathcal{Z}}\left\{ \min\{ -\nabla_i f(x^*) - l_i, u_i + \nabla_i f(x^*) \}\right\},
\end{equation}
which is the minimum distance to the nearest boundary of the subdifferential \eqref{eq:subinterior} among indices $i \in \mathcal{Z}$.

\begin{lemma}
\label{lem:manifold3}
Consider problem~\eqref{eq:compositeproblem}, where $f$ is convex with $L$-Lipschitz continuous gradient and the $g_i$ are proper convex functions (not necessarily smooth). Let Assumptions~\ref{assump2} be satisfied and Assumption~\ref{assump1} be satisfied for the particular $x^*$ that the algorithm converges to. Then for the \blu{coordinate-wise proximal-gradient iteration~\eqref{eq:update1}} with a step-size of $1/L$, if $\norm{x^k - x^*} \leq \delta/2L$ holds and $i \in \mathcal{Z}$ is selected at iteration $k$, then $x_i^{k+1} = x^*_i$.
\end{lemma}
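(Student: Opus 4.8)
The plan is to exploit the proximal update formula together with the nondegeneracy condition to show that the soft-threshold (or projection) step "snaps" coordinate $i$ onto its optimal value $x_i^*$ whenever we are close enough to $x^*$. First I would recall the coordinate update for $i\in\mathcal{Z}$: since the proximal operator separates, the update of coordinate $i$ at iteration $k$ is
\[
x_i^{k+1} = \prox{g_i/L}\!\left[x_i^k - \tfrac1L \nabla_i f(x^k)\right],
\]
and the optimality condition for this scalar proximal problem is that $-L\,(x_i^{k+1} - x_i^k + \tfrac1L\nabla_i f(x^k)) = L(x_i^k - \tfrac1L\nabla_i f(x^k) - x_i^{k+1})$ lies in $\partial g_i(x_i^{k+1})$. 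The key observation is that $x_i^* \in \mathcal{Z}$ means $\partial g_i(x_i^*)$ is the nondegenerate interval $(l_i,u_i)$ (after taking interiors), and by Assumption~\ref{assump1} we have $-\nabla_i f(x^*) \in (l_i,u_i)$ with margin at least $\delta$ to either endpoint. So I would argue that $x_i^* $ is itself the fixed point: $x_i^* = \prox{g_i/L}[x_i^* - \tfrac1L\nabla_i f(x^*)]$, which holds precisely because $-\nabla_i f(x^*)\in\partial g_i(x_i^*)$.

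Next I would compare the actual argument of the prox, namely $z^k := x_i^k - \tfrac1L\nabla_i f(x^k)$, with the "ideal" argument $z^* := x_i^* - \tfrac1L\nabla_i f(x^*)$. Using $L$-Lipschitz continuity of $\nabla f$ (so $|\nabla_i f(x^k) - \nabla_i f(x^*)| \le \|\nabla f(x^k) - \nabla f(x^*)\| \le L\|x^k - x^*\|$) and the hypothesis $\|x^k - x^*\| \le \delta/(2L)$, I get
\[
|z^k - z^*| \le |x_i^k - x_i^*| + \tfrac1L|\nabla_i f(x^k) - \nabla_i f(x^*)| \le \|x^k-x^*\| + \|x^k - x^*\| \le \tfrac{\delta}{L}.
\]
Then I would invoke the structure of the scalar proximal operator: since $-\nabla_i f(x^*)$ sits strictly inside $(l_i,u_i)$ with distance at least $\delta$ to each endpoint, the value $z^*$ lies in the interval of points that all get mapped by $\prox{g_i/L}$ to the single value $x_i^*$ — concretely, $\prox{g_i/L}$ is constant equal to $x_i^*$ on the interval $[x_i^* - u_i/L,\ x_i^* - l_i/L]$ (this is exactly the "flat region" of soft-thresholding, or the region mapped to a boundary point by projection), and $z^*$ is at distance at least $\delta/L$ from both ends of that flat region. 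Since $|z^k - z^*| \le \delta/L$, the point $z^k$ still lies in that flat region, hence $x_i^{k+1} = \prox{g_i/L}[z^k] = x_i^*$.

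The main obstacle I anticipate is making the "flat region" argument rigorous in the full generality of a proper convex lower-semicontinuous separable $g_i$ (rather than just the L1 or indicator special cases): I need to show cleanly that $\partial g_i(x_i) \ni s$ for all $s$ in an interval around $-\nabla_i f(x^*)$ forces the prox to be single-valued and equal to $x_i^*$ on a corresponding interval of inputs, and to get the quantitative radius $\delta/L$ right. This amounts to unpacking that $x = \prox{g_i/\alpha}[z]$ iff $\alpha(z - x) \in \partial g_i(x)$, and that $\partial g_i$ being the constant-in-value interval $(l_i,u_i)$ at $x_i^*$ translates (after scaling by $1/L$) into the stated flat region; the triangle-inequality bound above then closes it. I would also need to be slightly careful that the margin $\delta$ from~\eqref{eq:Delta} is defined as a min over $\mathcal{Z}$, so it is a uniform lower bound valid for the particular $i$ selected, and that the endpoints $l_i,u_i$ being possibly infinite only makes the flat region larger, never smaller.
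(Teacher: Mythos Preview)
Your proposal is correct and follows essentially the same argument the paper invokes by citing~\cite[Lemma~1]{nutini2017manifold}: characterize the scalar prox via its optimality condition, use $L$-Lipschitz continuity of $\nabla f$ together with $\|x^k-x^*\|\le \delta/2L$ to bound $|z^k-z^*|\le \delta/L$, and use the nondegeneracy margin $\delta$ to conclude that $z^k$ remains in the ``flat region'' where the prox returns $x_i^*$. One small slip to fix: since $\prox{g_i/L}[z]=x_i^*$ iff $L(z-x_i^*)\in\partial g_i(x_i^*)=[l_i,u_i]$, the flat region is $[x_i^* + l_i/L,\ x_i^* + u_i/L]$ rather than $[x_i^* - u_i/L,\ x_i^* - l_i/L]$, but this sign correction leaves your distance argument unchanged.
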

\begin{proof}
\blu{We obtain this result by applying~\citep[Lemma~1]{nutini2017manifold}, but restricting the function to the coordinate being updated.}
\end{proof}
 With the above results we next have the manifold identification property.
\begin{theorem}
\label{thm:manifoldgeneral}
Consider problem ~\eqref{eq:compositeproblem}, where $f$ is convex with $L$-Lipschitz continuous gradient and the $g_i$ are proper convex functions. Let Assumptions~\ref{assump2} be satisfied and Assumption~\ref{assump1} be satisfied for the particular $x^*$ that the algorithm converges to. Then for the \blu{coordinate-wise proximal-gradient iteration~\eqref{eq:update1}} a step-size of $1/L$, there exists a finite $k$ such that $x_i^k = x_i^*$ for all $i \in \mathcal{Z}$.
\end{theorem}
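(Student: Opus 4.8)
The plan is to assemble Lemmas~\ref{lem:manifold1}--\ref{lem:manifold3} into a single finite-termination statement: first choose a neighbourhood radius of $x^*$ small enough that the one-step pinning of Lemma~\ref{lem:manifold3} applies at every subsequent iteration, then argue that every coordinate in $\mathcal{Z}$ is eventually selected, gets set to $x_i^*$, and never leaves that value afterward, and finally take a maximum over the finite index set $\mathcal{Z}$.

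The first step is to check that the constant $\delta$ in~\eqref{eq:Delta} is strictly positive, since otherwise the radius $\delta/2L$ is useless. This is exactly where Assumption~\ref{assump1} enters: for each $i \in \mathcal{Z}$ we have $-\nabla_i f(x^*) \in \interior \partial g_i(x_i^*) \equiv (l_i, u_i)$, so both $-\nabla_i f(x^*) - l_i$ and $u_i + \nabla_i f(x^*)$ are strictly positive, and since $\mathcal{Z} \subseteq \{1,\dots,n\}$ is finite the minimum defining $\delta$ is attained and positive (if $\mathcal{Z} = \emptyset$ the theorem is vacuous, so we assume otherwise). Then I would invoke Lemma~\ref{lem:manifold1} with $\beta = \delta/2L$ to obtain a finite $\bar k$ with $\norm{x^k - x^*} \le \delta/2L$ for all $k \ge \bar k$.

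Next, I would fix an arbitrary $i \in \mathcal{Z}$ and show there is a finite $k_i$ with $x_i^k = x_i^*$ for all $k \ge k_i$. If $x_i^k = x_i^*$ already for all $k \ge \bar k$, take $k_i = \bar k$. Otherwise $x_i^k \neq x_i^*$ for some $k \ge \bar k$, and Lemma~\ref{lem:manifold2} yields a finite iteration $k' \ge \bar k$ at which coordinate $i$ is selected; since $\norm{x^{k'} - x^*} \le \delta/2L$, Lemma~\ref{lem:manifold3} gives $x_i^{k'+1} = x_i^*$. The point I want to be careful about is that this value is then preserved: by induction on $k \ge k'+1$, if $x_i^k = x_i^*$ then either $i$ is not selected at iteration $k$ (so $x_i^{k+1} = x_i^k = x_i^*$) or $i$ is selected and, because $\norm{x^k - x^*} \le \delta/2L$, Lemma~\ref{lem:manifold3} again forces $x_i^{k+1} = x_i^*$. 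Hence $k_i = k'+1$ works. Setting $K = \max_{i \in \mathcal{Z}} k_i$, which is finite because $\mathcal{Z}$ is finite and each $k_i$ is finite, we get $x_i^K = x_i^*$ for all $i \in \mathcal{Z}$, as claimed.

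The only genuinely delicate ingredient is Lemma~\ref{lem:manifold3}, the single-iteration identification bound, but this is already available (it is the coordinate restriction of the proximal-gradient result of Nutini et al.), so within this proof it may be used as a black box. The remaining effort is the bookkeeping in the previous paragraph: showing that once a coordinate of $\mathcal{Z}$ is correctly set it stays set regardless of which blocks are subsequently chosen, and that waiting for the finite maximum over $\mathcal{Z}$ suffices. Note that this argument is agnostic to the selection rule — it only uses that, from Lemma~\ref{lem:manifold2}, any coordinate that is ever off its target is selected again in finite time — so it covers cyclic, random, and greedy rules uniformly.
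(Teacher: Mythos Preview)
Your proposal is correct and follows essentially the same approach as the paper: assemble Lemmas~\ref{lem:manifold1}--\ref{lem:manifold3} so that eventually every $i\in\mathcal{Z}$ is selected while $\norm{x^k-x^*}\le\delta/2L$ and hence pinned to $x_i^*$. Your version is simply more explicit than the paper's one-line proof, spelling out why $\delta>0$, the persistence of $x_i^k=x_i^*$ under further updates, and the final maximum over the finite set $\mathcal{Z}$.
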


\begin{proof}
Lemma~\ref{lem:manifold1} implies that the assumptions of Lemma~\ref{lem:manifold3} are eventually satisfied, and combining this with  Lemma~\ref{lem:manifold2} we have the result.
\end{proof}

Both problems~\eqref{eq:bounds} and~\eqref{eq:L1} satisfy the manifold identification result. By the definition of $\delta$ in~\eqref{eq:Delta}, we have that $\delta = \min_{i \in \mathcal{Z}} \{ \nabla_i f(x^*) \}$ for problem~\eqref{eq:bounds}. We note that if $\delta = 0$, then we may approach the manifold through the interior of the domain and the manifold may never be identified (this is the purpose of the nondegeneracy condition). For problem~\eqref{eq:L1}, we have that $\delta = \lambda - \max_{i \in \mathcal{Z}} \{ | \nabla_i f(x^*) | \}$. From these results, we are able to define explicit bounds on the number of iterations required to reach the manifold, a new result that we explore in the next subsection.

Instead of using a step-size of $1/L$, it is more common to use a bigger step-size of $1/L_{i}$ within coordinate descent methods, where $L_{i}$ is the coordinate-wise Lipschitz constant. In this case, the results of Lemma~\ref{lem:manifold3} hold for $\beta = \delta/(L+L_i)$. This is a larger region since $L_i \leq L$, so with this standard step-size the iterates can move onto the manifold from further away and we expect to identify the manifold earlier. The argument can also be modified to use other step-size selection methods, provided that we can write the algorithm in terms of a step-size $\alpha_k$ that is guaranteed to be bounded from below. 
While the above result considers single-coordinate updates, it can trivially be modified to show that BCD with larger block sizes (and gradient updates) have the manifold identification property. The only change is that once $\norm{x^k - x^*} \leq \delta/2L$, we have that $x_i^{k+1} = x_i^*$ for all $i \in b_k \cap \mathcal{Z}$. Thus, BCD methods can simultaneously move many variables onto the optimal manifold.

\subsection{Active-Set Complexity}
\label{sec:maniRate}

The manifold identification property of the previous section could also be shown using the more sophisticated tools used in related works~\citep{burke1988identification, hare2004}. However, an appealing aspect of the simple argument above is that it can be combined with non-asymptotic convergence rates of the iterates to bound the \emph{number of iterations required to reach the manifold}. \blu{This was first explored by~\citet{liang2017activity} and we call this the ``active-set complexity'' of the method. We have recently given tighter bounds than Liang et al. on the active-set complexity of proximal-gradient methods for strongly-convex objectives~\citep{nutini2017manifold}}. Here we use a similar analysis to bound the active-set complexity of BCD methods, which is complicated by the fact that not all coordinates are updated on each iteration.

We will consider the active-set complexity in the case where $f$ is strongly-convex and we use cyclic or greedy selection. This guarantees that
\begin{equation}\label{eq:iterBound}
\norm{x^k - x^*} \leq \left (1-\frac{1}{\kappa} \right )^k \gamma,
\end{equation}
for some $\gamma \geq 0$ and some $\kappa \geq 1$ (see~\ref{append:strong}, and we note that this type of rate also holds for a variety of other types of selection rules). 
By using that $(1 - 1/\kappa)^k \leq \exp(- k/\kappa)$, the linear convergence rate~\eqref{eq:iterBound} implies the following result on how many iterations it will take to identify the active-set, and thus reach the manifold.
\begin{theorem}
\label{thm:activeset}
Consider any method that achieves an iterate bound~\eqref{eq:iterBound}. 
For $\delta$ as defined in~\eqref{eq:Delta}, we have $\norm{x^{\bar{k}} - x^*} \leq \delta/2L$ after at most $\kappa \log(2L\gamma/\delta)$ iterations. Further, we will identify the active-set after an additional $t$ iterations, where $t$ is the number of additional iterations required to select all suboptimal $x_i$ with $i \in \mathcal{Z}$ as part of some block.
\end{theorem}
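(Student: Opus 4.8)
The plan is to combine the linear convergence of the iterates~\eqref{eq:iterBound} with the single-step manifold-identification guarantee of Lemma~\ref{lem:manifold3} (in the block form noted after Theorem~\ref{thm:manifoldgeneral}). The first claim is a direct calculation: using $(1-1/\kappa)^k \le \exp(-k/\kappa)$ in~\eqref{eq:iterBound} gives $\norm{x^k - x^*} \le \exp(-k/\kappa)\gamma$, and requiring the right-hand side to be at most $\delta/2L$ amounts to $k \ge \kappa\log(2L\gamma/\delta)$. Hence taking $\bar k$ to be the smallest integer at least $\kappa\log(2L\gamma/\delta)$ suffices, and since the bound in~\eqref{eq:iterBound} is non-increasing in $k$ we actually have $\norm{x^k - x^*}\le\delta/2L$ for \emph{all} $k\ge\bar k$.

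For the second claim, I would argue that from iteration $\bar k$ onward the manifold is ``captured'' coordinate by coordinate. Whenever $k\ge\bar k$ and some $i\in\mathcal{Z}$ lies in the selected block $b_k$, Lemma~\ref{lem:manifold3} (block version) gives $x_i^{k+1}=x_i^*$; a coordinate not in $b_k$ keeps its value; and if $x_i^k=x_i^*$ already and $i$ is re-selected at a later iteration $k'\ge\bar k$, Lemma~\ref{lem:manifold3} applies once more and preserves $x_i^{k'+1}=x_i^*$. Therefore it is enough that every $i\in\mathcal{Z}$ with $x_i^{\bar k}\neq x_i^*$ appears in some block $b_k$ with $k\ge\bar k$ at least once. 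By definition $t$ is the number of additional iterations until this has happened for all such coordinates, and after those $t$ iterations we have $x_i^k=x_i^*$ for every $i\in\mathcal{Z}$, i.e.\ the active set is identified. I would close by remarking that for cyclic selection $t$ is at most one full sweep through the blocks, and for greedy selection $t$ is finite by Lemma~\ref{lem:manifold2}, which is why the statement restricts to these selection rules.

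The main subtlety to get right is the persistence of the conclusion of Lemma~\ref{lem:manifold3}: one must check that no subsequent update can move an already-identified coordinate off the manifold, which is exactly what the monotonicity of~\eqref{eq:iterBound} (so that the hypothesis $\norm{x^k-x^*}\le\delta/2L$ keeps holding) together with the re-application of Lemma~\ref{lem:manifold3} to re-selected coordinates provides. A secondary point is bookkeeping: $t$ counts iterations rather than coordinate updates, so for a general selection rule finiteness of $t$ is not automatic and must be inherited from Lemma~\ref{lem:manifold2} (greedy) or from the definition of the cycle.
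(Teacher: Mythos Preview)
Your proposal is correct and follows the same approach as the paper: the paper's argument is essentially the single sentence preceding the theorem, namely that $(1-1/\kappa)^k\le\exp(-k/\kappa)$ applied to~\eqref{eq:iterBound} yields the iteration bound, with the second claim then following from (the block version of) Lemma~\ref{lem:manifold3}. Your write-up is simply a more detailed and careful elaboration of that sketch, including the useful observation that the hypothesis of Lemma~\ref{lem:manifold3} persists for all $k\ge\bar k$ so that identified coordinates cannot leave the manifold.
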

\begin{proof}
\blu{See Corollary~1 in~\citet{nutini2017manifold} for bounding the time until we are ``close'' to the solution, in the sense that $\norm{x^{\bar{k}} - x^*} \leq \delta/2L$. It then follows from applying~\citet[Lemma~1]{nutini2017manifold} that each time a suboptimal $x_i$ with $i \in \mathcal{Z}$ is selected that it will set $x_i^k=x_i^*$. Thus, by how $t$ is defined, we will identify the active-set $t$ iterations after we become ``close'' to the solution.}
\end{proof}
 The value of $t$ depends on the selection rule we use. If we use cyclic selection we will require at most $t = n$ additional iterations to select all suboptimal coordinates $i \in \mathcal{Z}$ and thus to reach the optimal manifold. To bound the active-set complexity for general rules like greedy rules, we cannot guarantee that all coordinates will be selected after $n$ iterations once we are close to the solution. In the case of non-negative constraints, the number of additional iterations depends on a quantity we will call $\epsilon$, which is the smallest non-zero variable $x_i^{\bar{k}}$ for $i \in \mathcal{Z}$ and $\bar{k}$ satisfying the first part of Theorem~\ref{thm:activeset}. It follows from~\eqref{eq:iterBound} that we require at most $\kappa \log(\gamma/\epsilon)$ iterations beyond $\bar{k}$ to select all non-zero $i \in \mathcal{Z}$. Thus, the active-set complexity for greedy rules for problem~\eqref{eq:bounds} is bounded above by 
 $\kappa(\log(2L\gamma/\delta) + \log(\gamma/\epsilon))$. 
 Based on this bound, greedy rules (which yield a smaller $\kappa$) may identify the manifold more quickly than cyclic rules in cases where $\epsilon$ is large. However, if $\epsilon$ is very small then greedy rules may take a larger number of iterations to reach the manifold.\footnote{If this is a concern, the implementer could consider a safeguard ensuring that the method is essentially-cyclic. Alternately, we could consider rules that prefer to include variables that are near the manifold and have the appropriate gradient sign.} 

\subsection{Proximal-Newton Updates and Superlinear Convergence}
\label{sec:superlinear}

Once we have identified the optimal manifold, we can think about switching from using the proximal BCD method to using an unconstrained optimizer on the coordinates $i \not\in \mathcal{Z}$. The unconstrained optimizer can be a Newton update, and thus under the appropriate conditions can achieve superlinear convergence. However, a problem with such ``2-phase'' approaches is that we do not know the exact time at which we reach the optimal manifold. This can make the approach inefficient: if we start the second phase too early, then we sub-optimize over the wrong manifold, while if we start the second phase too late, then we waste iterations performing first-order updates when we should be using second-order updates. Wright proposes an interesting alternative where at each iteration we consider replacing the gradient proximal-BCD update with a Newton update on the current manifold~\citep{wright2012}. This has the advantage that the manifold can continue to be updated, and that Newton updates are possible as soon as the optimal manifold has been identified.
However, note that the dimension of the current manifold might be larger than the block size and the dimension of the optimal manifold, so this approach can significantly increase the iteration cost for some problems. 

Rather than ``switching'' to an unconstrained Newton update, we can alternately take advantage of the superlinear converge of proximal-Newton updates~\citep{lee2012proximal}. For example, in this section we consider Newton proximal-BCD updates as in several recent works~\citep{qu2015sdna,tappenden2016,fountoulakis2015flexible}. For a block $b$ these updates have the form
\begin{equation}
\label{eq:proxNewton}
x_b^{k+1} \in \argmin{y\in\mathbb{R}^{|b|}}\left\{\langle \nabla_b f(x_b^k),y-x_b^k\rangle + \frac{1}{2\alpha_k}\norm{y-x_b^k}_{H_b^k}^2 + \sum_{i\in b}g_i(y_i)\right\},
\end{equation}
where $H_b^k$ is the matrix corresponding to block $b$ at iteration $k$ (which can be the sub-Hessian $\nabla_{bb}^2 f(x^k)$). \blu{This is a generalization of the matrix update~\eqref{eq:matrix}, and that update is obtained in the special case where the $g_i$ are constant.}
As before if we set $H_b^k = H_b$ for some fixed matrix $H_b$, then we can take $\alpha_k = 1$ if block $b$ of $f$ is 1-Lipschitz continuous in the $H_b$-norm. \blu{Alternately, if we set $H_b^k = \nabla_{b_k,b_k}^2 f(x^k)$ then we obtain a generalization of Newton's method.}



In the next section, we give a practical variant on proximal-Newton updates that also has the manifold identification property under standard assumptions\footnote{A common variation of the proximal-Newton method solves~\eqref{eq:proxNewton} with $\alpha_k = 1$ and then sets $x^{k+1}$ based on a search along the line segment between $x^k$ and this solution~\citep{schmidt2010,fountoulakis2015flexible}. This variation does \emph{not} have the manifold identification property; only when the line-search is on $\alpha_k$ do we have this property.}.
An advantage of this approach is that the block size typically restricts the computational complexity of the Newton update (which we discuss further in the next sections). Further, superlinear convergence is possible in the scenario where the coordinates $i \not\in \mathcal{Z}$ are chosen as part of the block $b_k$ for all sufficiently large $k$. However, note that this superlinear scenario only occurs in the special case where we use a \emph{greedy rule with variable blocks} and where the \emph{size of the blocks is at least as large as the dimension of the optimal manifold}. 
 With variable blocks, the GS-$q$ and GSL-$q$ rules~\eqref{eq:GSq} or GSD-$q$ and GSQ-$q$ rules~\eqref{eq:GSQq} will no longer select coordinates $i \in \mathcal{Z}$ since their optimal $d_i$ value is zero when close to the solution and on the manifold. Thus, these rules will only select $i \not\in\mathcal{Z}$ once the manifold has been identified.\footnote{A subtle issue is the case where $d_i = 0$ in~\eqref{eq:GSq}, but $i \not\in\mathcal{Z}$. In such cases we can break ties by preferring coordinates $i$ where $g_i$ is differentiable so that the $i \not\in\mathcal{Z}$ are included.} In contrast, we would not expect superlinear convergence for fixed blocks unless all $i \not\in\mathcal{Z}$ happen to be in the same partition. While we could show superlinear convergence of subsequences for random selection with variable blocks, the number of iterations between elements of the subsequence may be prohibitively large.\footnote{\blu{Since the first version of this work was released,~\citet{lopes2019accelerating} consider decreasing the probability of sampling the variables at zero over time. This may lead to cheaper iterations than using greedy rules while still having the potential to achieve superlinear convergence if implemented carefully.}}

\subsection{Practical Proximal-Newton Methods}
\label{sec:tmp}

A challenge with using the update~\eqref{eq:proxNewton} in general is that the optimization is non-quadratic (due to the $g_i$ terms) and non-separable (due to the $H_b^k$-norm). If we make the $H_b^k$ diagonal, then the objective is separable, but this  destroys the potential for superlinear convergence. 
Fortunately, a variety of strategies exist in the literature to allow non-diagonal $H_b^k$. 

For example, for bound constrained problems we can apply two-metric projection (TMP) methods, which use a modified $H_b^k$ and allow the computation of a (cheap) projection under the Euclidean norm~\citep{gafni1982}. This method splits the coordinates into an ``active-'' set and a ``working-'' set, where the active-set $\mathcal{A}$ for non-negative constraints would be
\[
	\mathcal{A} = \{ i ~|~ x_i < \epsilon , \nabla_i f(x) > 0\},
\]
for some small $\epsilon$ while the working-set $\mathcal{W}$ is the compliment of this set. So the active-set contains the coordinates corresponding to the variables that we expect to be zero while the working-set contains the coordinates corresponding to the variables that we expect to be unconstrained. The TMP method can subsequently use the update
\begin{align*}
	x_{\mathcal{W}} &\leftarrow \proj{C} \left ( x_{\mathcal{W}} - \alpha H_{\mathcal{W}}^{-1} \nabla_{\mathcal{W}} f(x) \right ) \\
	x_{\mathcal{A}} &\leftarrow \proj{C} \left ( x_{\mathcal{A}} - \alpha \nabla_{\mathcal{A}} f(x) \right ).
\end{align*}	
This method performs a gradient update on the active-set and a Newton update on the working-set.~\citet{gafni1982} show that this preserves many of the essential properties of projected-Newton methods like giving a descent direction, converging to a stationary point, and superlinear convergence if we identify the correct set of non-zero variables. 
Also note that for indices $i \in \mathcal{Z}$, this eventually only takes gradient steps so our analysis of the previous section applies (it identifies the manifold in a finite number of iterations).
As opposed to solving the block-wise proximal-Newton update in~\eqref{eq:proxNewton}, in our experiments we explored simply using the TMP update applied to the block and found that it gave nearly identical performance.
 
 TMP methods have also been generalized to settings like L1-regularization~\citep{schmidt2010} and they can essentially be used for any separable $g$ function. Another widely-used strategy is to inexactly solve~\eqref{eq:proxNewton}~\citep{schmidt2010,lee2012proximal,fountoulakis2015flexible}. This has the advantage that it can still be used in the group L1-regularization setting or other group-separable settings.

\subsection{Optimal Updates for Quadratic $f$ and Piecewise-Linear $g$}
\label{subsec:optQuad}

Two of the most well-studied optimization problems in machine learning are the SVM and LASSO problems. The LASSO problem is given by an L1-regularized quadratic objective
\[
	\argmin{x} \frac{1}{2} \| Ax - b \|^2 + \lambda \| x \|_1,
\]
while the dual of the (non-smooth) SVM problem has the form of a bound-constrained quadratic objective
\[
	\argmin{0 \leq x \leq \gamma} \frac{1}{2} x^TAx,
\]
for some matrix $A$ (positive semi-definite in the SVM case), and regularization constants $\lambda$ and $\gamma$. In both cases we typically expect the solution to be sparse, and identifying the optimal manifold has been shown to improve practical performance of BCD methods~\citep{svmlight,santis2015}. 

Both problems have a set of $g_i$ that are piecewise-linear over their domain, implying that the they can be written as a maximum over univariate linear functions on the domain of each variable. Although we can still consider TMP or inexact proximal-Newton updates for these problems, this special structure actually allows us to compute the exact minimum with respect to a block (which is efficient when considering medium-sized blocks). Indeed, for SVM problems the idea of using exact updates in BCD methods dates back to the sequential minimal optimization (SMO) method~\citep{platt1998}, which uses exact updates for blocks of size 2. In this section we consider methods that work for blocks of arbitrary size.\footnote{The methods discussed in this section can also be used to compute exact Newton-like updates in the case of a non-quadratic $f$, but where the $g_i$ are still piecewise-linear.}

While we could write the optimal update as a quadratic program, the special structure of the LASSO and SVM problems lends well to exact homotopy methods. These methods date back to~\citet{osborne2000, osborne2011} who proposed an exact homotopy method that solves the LASSO problem for all values of $\lambda$. This type of approach was later popularized under the name ``least angle regression'' (LARS)~\citep{lars2004}. Since the solution path is piecewise-linear, given the output of a homotopy algorithm we can extract the exact solution for our given value of $\lambda$.~\citet{hastie2004} derive an analogous homotopy method for SVMs, while~\citet{rosset2007piecewise} derive a generic homotopy method for the case of piecewise-linear $g_i$ functions.

The cost of each iteration of a homotopy method on a problem with $|b|$ variables is $O(|b|^2)$. It is known that the worst-case runtime of these homotopy methods can be exponential~\citep{mairal2012complexity}. However, the problems where this arises are somewhat unstable, and in practice the solution is typically obtained after a linear number of iterations. This gives a runtime in practice of $O(|b|^3)$, which does not allow enormous blocks, but does allow us to efficiently use block sizes in the hundreds or thousands. That being said, since these methods compute the exact block update, in the scenario where we previously had superlinear convergence, we now obtain \emph{finite} convergence. That is, the algorithm will stop in a finite number of iterations with the exact solution {\em provided that} it has identified the optimal manifold, uses a greedy rule with variable blocks, and the block size is larger than the dimension of the manifold. 
This finite termination is also guaranteed under similar assumptions for TMP methods, and although TMP methods may make less progress per-iteration than exact updates, they may be a cheaper alternative to homotopy methods as the cost is explicitly restricted to $O(|b|^3)$.

\section{Numerical Experiments}
\label{sec:experiments}
We performed an extensive variety of experiments to evaluate the effects of the choices listed in the previous sections. In this section we include several of these results that highlight some key trends we observed, and in each subsection below we explicitly list the insights we obtained from the experiment. We considered five datasets that evaluate the methods in a variety of scenarios:
\begin{enumerate}[label=\Alph*]
\item Least-squares with a sparse data matrix \blu{(10,000 variables)}.
\item Binary logistic regression with a sparse data matrix \blu{(10,000 variables)}.
\item 50-class logistic regression problem with a sparse data matrix \blu{(50,000 variables)}.
\item Lattice-structured quadratic objective as in Section~\ref{sec:messagepassing} \blu{(2,400 variables)}.
\item Binary label propagation problem (sparse but unstructured quadratic) \blu{(1,900 variables)}.
\end{enumerate}
We give the full details of these datasets in~\ref{append:experiments} where we have also included our full set of experiment results.
\blu{Due to the sparsity present in these datasets, it is possible to implement the basic GS rule for a similar cost to random selection (by tracking the gradient element in a max-heap structure).} \blu{We used the bounds in~\ref{append:blockLipschitz} to set the various Lipschitz constants ($L_i$, $L_b$, and $H_b$).}

In our experiments we use the number of iterations as our measure of performance. This measure is far from perfect, especially when considering greedy methods, since it ignores the computational cost of each iteration. However, this measure of performance provides an implementation- and problem-independent measure of performance. We seek an implementation-independent measure of performance since the actual runtimes of different methods will vary wildly across applications. However, it is typically easy to estimate the per-iteration runtime when considering a new problem. Thus, we hope that our quantification of what can be gained from faster-converging methods gives guidance to readers about whether the faster-converging methods will lead to a substantial performance gain on their applications. We are careful to qualify all of our claims with warnings in cases where the iteration costs differ.

\subsection{Greedy Rules with Gradient Updates}
\label{sec:expGrad}

Our first experiment considers gradient updates with a step-size of $1/L_b$, and seeks to quantify the effect of using fixed blocks compared to variable blocks (Section~\ref{subsec:boundsprogress}) as well as the effect of using the new \blu{block} GSL rule (Section~\ref{sec:BGSL}). In particular, we compare selecting the block using Cyclic, Random, Lipschitz (sampling the elements of the block proportional to $L_i$), GS, and GSL rules. For each of these rules we implemented a fixed block (FB) and variable block (VB) variant.
For VB using Cyclic selection, we split a random permutation of the coordinates into equal-sized blocks and updated these blocks in order (followed by using another random permutation). \blu{We also considered Perm with FB, where we use a fixed set of blocks and we go through a sequence of random permutations of these blocks.} 
 To approximate the seemingly-intractable GSL rule with VB, we used the GSD rule (Section~\ref{subsec:fixed}) using the SIRT-style approximation~\eqref{eq:GSDsum} from Section~\ref{subsec:adaptive}.
 To construct the partition of the coordinates needed in the FB method, we sorted the coordinates according to their $L_i$ values then placed the largest $L_i$ values into the first block, the next set of largest in the second block, and so on. \blu{Since quite a few methods are being compared, we list the full set of methods used in this experiment (as well as the experiment of the next section) in Table~\ref{tab4}.}

\begin{sidewaystable}
    \footnotesize
    \centering
    \begin{tabular}{@{}ll||*{3}{l}}
    &   &\multicolumn{2}{c}{{\bf Blocking Strategy}} \\ [1em]
    &   &\multicolumn{1}{c}{Fixed (FB)}  &\multicolumn{1}{c}{Variable (VB)}   \\ 
 \hline \hline
    \multirow{18}*{\rotatebox{90}{{\bf Block Selection Rule}}}  
   &\begin{tabular}{@{}l@{}}Cyclic \\ $b = 1, 2, \dots, n/\tau, 1, 2, \dots$ \end{tabular}   
   &\begin{tabular}{@{}l@{}}Cyclic-FB: \\ \hspace{1em} sort $L_i$, group into equal sized \\ \hspace{1em} blocks, cycle through blocks in order \end{tabular}
   &\begin{tabular}{@{}l@{}}Cyclic-VB: \\ \hspace{1em} randomly permute $i$, group into equal \\ \hspace{1em} sized blocks, cycle through blocks in order, \\ \hspace{1em} repeat \end{tabular}
       \\ [2em] \cline{2-4} 
   &\begin{tabular}{@{}l@{}}Random \\  $b \in \{1, 2, \dots, n/\tau\}$ \end{tabular} 
   &\begin{tabular}{@{}l@{}}Random-FB: \\ \hspace{1em} sort $L_i$, group into equal sized blocks,  \\ \hspace{1em} select blocks uniformly at random \end{tabular}
   &\begin{tabular}{@{}l@{}}Random-VB: \\ \hspace{1em} select $\tau$ coordinates uniformly at random at \\ \hspace{1em} each iteration \end{tabular}  
	\\ \cline{2-4}
   &\begin{tabular}{@{}l@{}}Lipschitz Sampling \\ $p(b_k = b) = \frac{L_b}{\sum_{\hat{b} \in \mathcal{B}} L_{\hat{b}}}$ \end{tabular} 
   &\begin{tabular}{@{}l@{}}Lipschitz-FB: \\ \hspace{1em} sort $L_i$, group into equal sized blocks,  \\ \hspace{1em} select blocks randomly according to \\ \hspace{1em} non-uniform distribution \end{tabular}
   &\begin{tabular}{@{}l@{}}Lipschitz-VB: \\ \hspace{1em} select $\tau$ coordinates randomly according to \\ \hspace{1em} non-uniform distribution, $p_i = L_i/ \sum_{i = 1}^n L_{i}$ \end{tabular}  
	\\ \cline{2-4}
   &\begin{tabular}{@{}l@{}}Gauss-Southwell (GS) \\ $\displaystyle b_k \in \argmax{b \in \mathcal{B}} \| \nabla_b f(x^k) \|$ \end{tabular} 
   &\begin{tabular}{@{}l@{}}GS-FB: \\ \hspace{1em} sort $L_i$, group into equal sized blocks, \\ \hspace{1em} select blocks using GS rule \end{tabular}  
   &\begin{tabular}{@{}l@{}}GS-VB: \\ \hspace{1em} select the $\tau$ maximal coordinates according \\ \hspace{1em} to the single-coordinate GS rule \end{tabular}  
	\\  \cline{2-4}
   &\begin{tabular}{@{}l@{}}Gauss-Southwell-Lipschitz (GSL) \\ $\displaystyle b_k \in \argmax{b \in \mathcal{B}}\left\{\frac{\|\nabla_b f(x^k)\|^2}{L_{b}}\right\}$ \end{tabular}
   &\begin{tabular}{@{}l@{}}GSL-FB: \\ \hspace{1em} sort $L_i$, group into equal sized blocks, \\ \hspace{1em} select blocks using GSL rule \end{tabular} 
   &\begin{tabular}{@{}l@{}}GSL-VB: \\ \hspace{1em} select the $\tau$ maximal coordinates according \\ \hspace{1em} to the GSD rule ($D_{b,i} = D_i$), use SIRT \\ \hspace{1em} to compute the $D_i$ (\ref{subsec:adaptive}) \end{tabular}  
	\\ \cline{2-4}
   &\begin{tabular}{@{}l@{}}Gauss-Southwell-Diagonal (GSD)  \\ $\argmax{b \in \mathcal{B}}\left\{\sum_{i \in b} \frac{ |\nabla_i f(x^k)|^2 }{D_{b,i} }\right\}$ \end{tabular} 
   &\begin{tabular}{@{}l@{}}GSD-FB: \\ \hspace{1em} sort $L_i$, group into equal sized blocks, \\ \hspace{1em} select blocks using GSD rule ($D_{b,i} = L_i$) \end{tabular}
   &\begin{tabular}{@{}l@{}}GSD-VB: \\ \hspace{1em} select the $\tau$ maximal coordinates according \\ \hspace{1em} to the GSD rule ($D_{b,i} = L_i$)
   \end{tabular}   
   	\\ \cline{2-4}
   &\begin{tabular}{@{}l@{}}Gauss-Southwell-Quadratic (GSQ)  \\ $\displaystyle b_k \in \argmax{b \in \mathcal{B}}\left\{\norm{\nabla_b f(x^k)}_{H_b^{-1}}\right\} $ \end{tabular}
   &\begin{tabular}{@{}l@{}}GSQ-FB: \\ \hspace{1em} sort $L_i$, group into equal sized blocks, \\ \hspace{1em} select blocks using GSQ rule \end{tabular} 
   &\begin{tabular}{@{}l@{}}GSQ-VB: \\ \hspace{1em} use approximation to GSQ rule (\ref{sec:aGSQ}) \\ \hspace{1em} and 10 iterations of IHT \end{tabular} \\ \cline{2-4}
    \end{tabular}
    \caption{\blu{Summary of block selection rules and blocking strategies used in Figures~\ref{fig:exp1} \&~\ref{fig:exp2}.}}
    \label{tab4}
\end{sidewaystable}

\begin{figure}
\includegraphics[width=.32\textwidth]{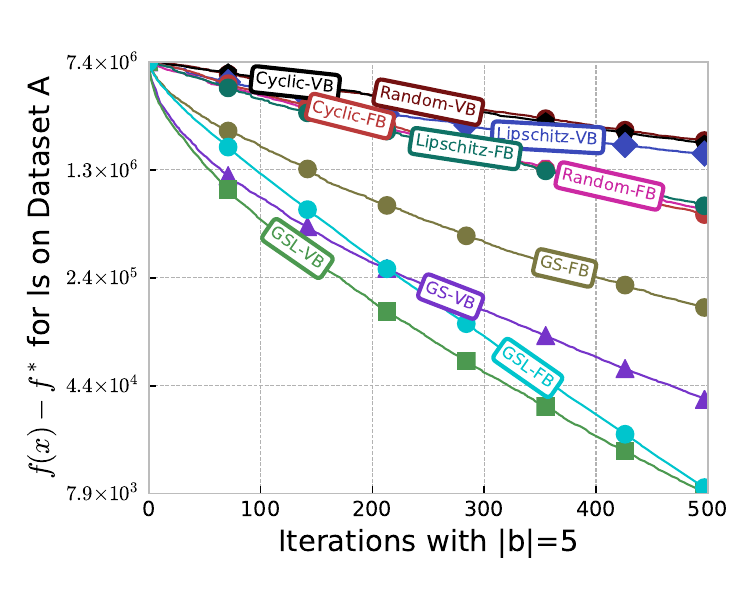}
\includegraphics[width=.32\textwidth]{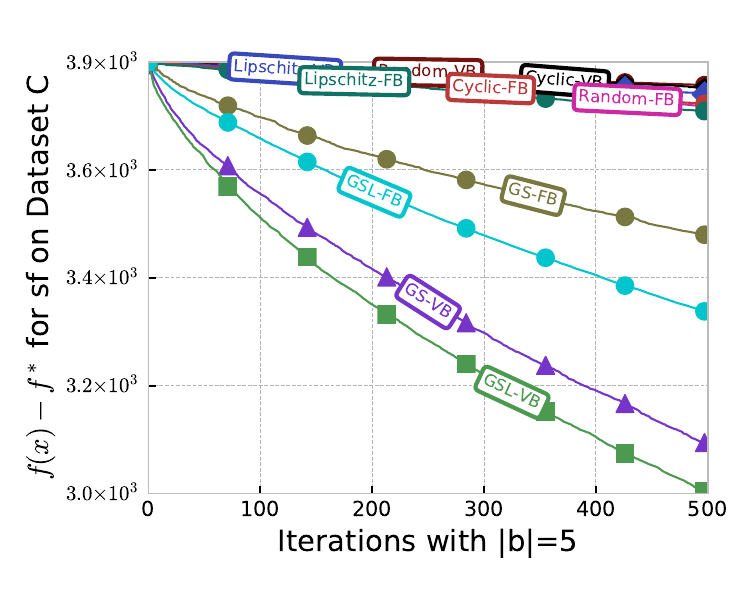}
\includegraphics[width=.32\textwidth]{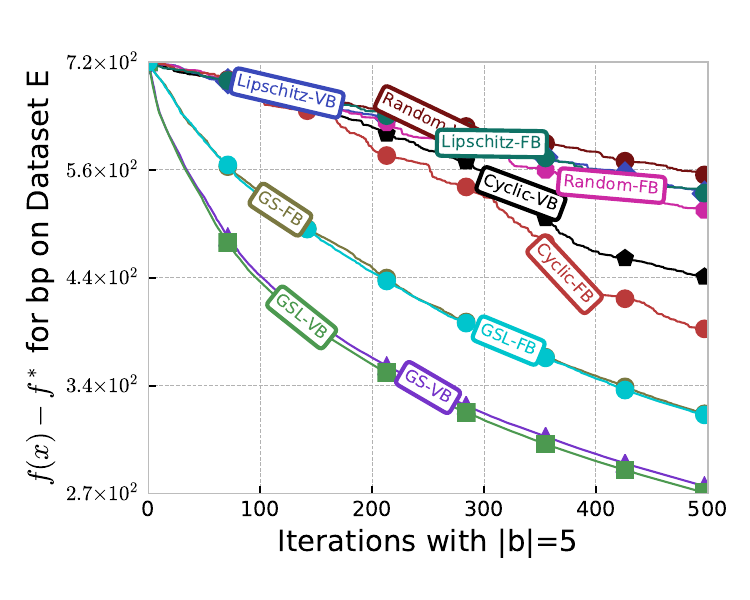}
\caption{\blu{Comparison of cyclic, random and greedy block selection rules using either FB or VB with a gradient update on three different problems. We see that the greedy selection rules tend to make more progress than the cyclic or random selection rules. Further, the greedy VB variations can converge substantially faster than greedy FB, and the GSL rule can converge substantially faster than the classic GS rule. Details on the full set of methods used in this experiment are in Table~\ref{tab4}.}}
\label{fig:exp1}
\end{figure}

We plot selected results \blu{for a block size of 5 in Figure~\ref{fig:exp1}}, while experiments on all datasets and with other block sizes are given in~\ref{append:greedyGradient}. Overall, we observed the following trends \blu{regarding the number of iterations}:
\begin{itemize}
\item \textbf{Greedy rules \blu{tend to make more progress per iteration} than random and cyclic rules}, particularly with small block sizes. This difference is sometimes enormous, and this suggests we should prefer greedy rules when the greedy rules can be implemented with a similar cost to cyclic or random selection.
\item \textbf{The variance of the performance between the rules becomes smaller as the block size increases}. This is because the performance of all methods increases with the block size, and suggests that if we use very-large block sizes with gradient updates that we should prefer simple Cyclic or Random updates.
\item \textbf{VB can \blu{converge substantially faster} than FB when using GS} for certain problems. \blu{This agrees with the theory, because the FBs are a subset of the VBs so the progress bound for VBs always dominates FBs}. Thus, we should prefer GS-VB for problems where this has a similar cost to GS-FB. We found \emph{this trend was reversed for random rules}, where fixed blocks tended to perform better. We suspect this trend is due to the coupon collector problem: with random selection it takes FB fewer iterations than VB to select all variables at least once.
\item \textbf{GSL \blu{can converge substantially faster} than the classic GS rule} for certain problems (\blu{while the performance was always at least as good}). In some cases the new \blu{block-}GSL rule with FB even outperformed the GS rule with VB. Interestingly, the \blu{prior work evaluating the single-coordinate GSL rule only showed minor practical gains for GSL over GS~\citep{nutini2015}, while in the block case the performance gain was in some cases much larger.}
\end{itemize}
In~\ref{append:greedyGradient} we repeat this experiment for the FB methods but using the approximation to $L_b$ discussed in Section~\ref{sec:LA} (see Figure~\ref{fig:exp3a}). This sought to test whether this procedure, which may underestimate the true $L_b$ and thus use larger step-sizes, would improve performance. This experiment lead to some additional insights:
\begin{itemize}
\item \textbf{Approximating $L_b$ was more effective as the block size increases}. This makes sense, since with large block sizes there are more possible directions and we are unlikely to ever need to use a step-size as small as $1/L_b$ for the global $L_b$ \blu{(which increases with block size)}.
\item \textbf{Approximating $L_b$ is far more effective than using a loose bound}. For small block sizes we have relatively-good bounds for all problems except Problem C. On this problem the Lipschitz approximation procedure was much more effective even for small block sizes.
\item \blu{\textbf{GSL sometimes performed worse than the classic GS rule when approximating $L_b$}. This seems sensible, particularly early in the optimization, since the GS rule does not depend on the $L_b$ while the GSL rule can potentially be negatively affected by poor estimates of $L_b$. This suggests that, for example, we might want to consider methods that start out using the GS rule and gradually switch to using the GSL rule.}
\end{itemize}
This experiment suggests that we should prefer to use an approximation to $L_b$ (or an explicit line-search) when using gradient updates unless  we have a tight approximation to the true $L_b$ \emph{and} we are using a small block size.
\blu{We also performed experiments with different block partitioning strategies when using greedy rules with FB (see Figure~\ref{fig:exp4b}). In those experiments, our sorting approach tended to outperform using random blocks or choosing the blocks to have similar average Lipschitz constants.\footnote{We repeated this experiment comparing different FB partitioning rules with cyclic and random rules, but for these rules we found that the block partitioning strategy did not make a large difference.}}


\subsection{Greedy Rules with Matrix Updates}
\label{sec:expNewt}

Our next experiment considers using matrix updates based on the matrices $H_b$ from~\ref{append:blockLipschitz}, and quantifies the effects of the GSQ and GSD rules introduced in Sections~\ref{sec:BGSQ}-\ref{subsec:fixed} as well the approximations to these introduced in Sections~\ref{subsec:adaptive}-\ref{sec:aGSQ}. In particular, for FB we consider the GS rule and the GSL rule (from the previous experiment), the GSD rule (using the diagonal matrices from Section~\ref{subsec:adaptive} with $D_{b,i} = L_i$), and the GSQ rule (which is optimal for the three quadratic objectives). For VB we consider the GS rule from the previous experiment as well as the GSD rule (using $D_{b,i} = L_i$), and the GSQ rule using the approximation from Section~\ref{sec:aGSQ} and 10 iterations of iterative hard thresholding. Other than switching to matrix updates and focusing on these greedy rules, we keep all other experimental factors the same.

\begin{figure}
\includegraphics[width=.32\textwidth]{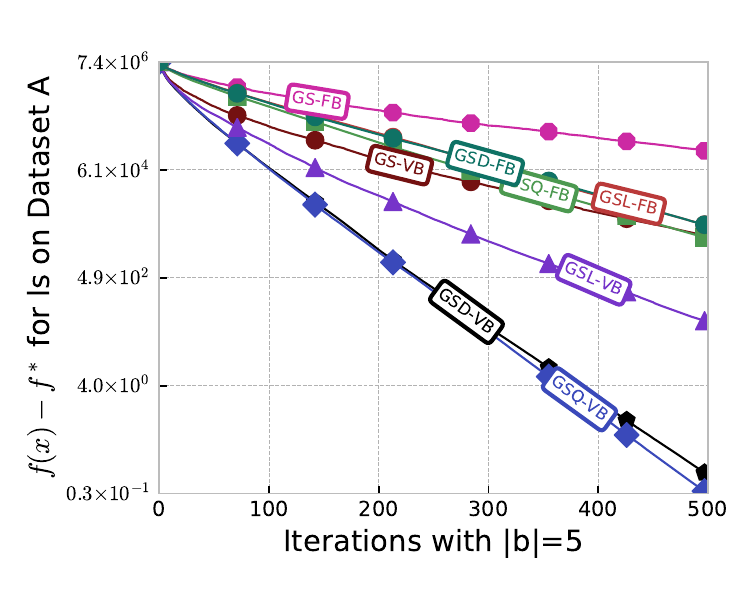}
\includegraphics[width=.32\textwidth]{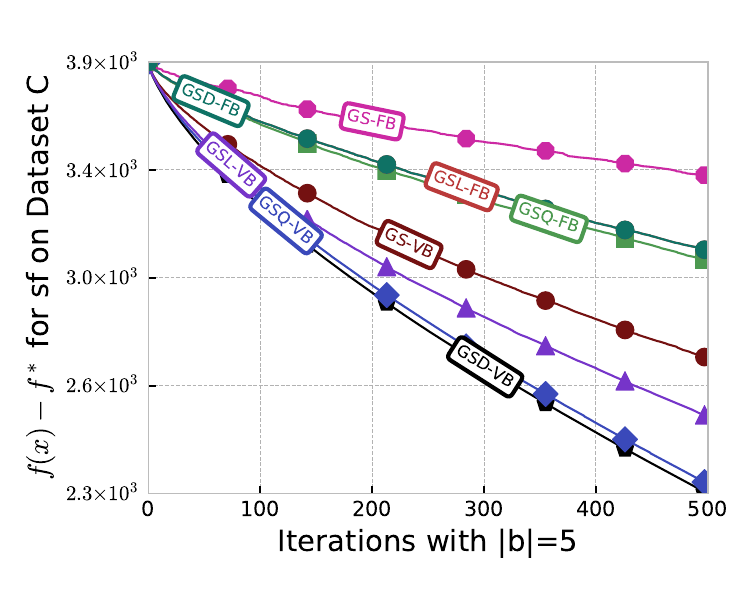}
\includegraphics[width=.32\textwidth]{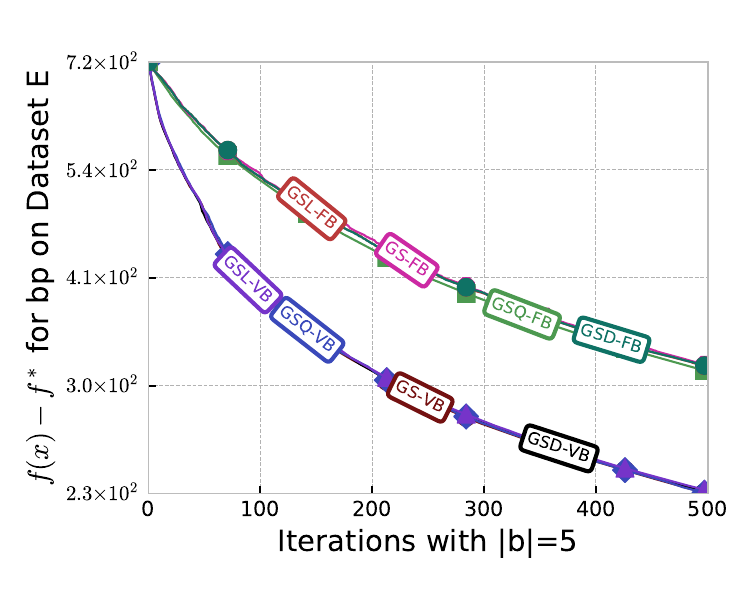}
\caption{\blu{Comparison of different greedy block selection rules using either FB or VB with matrix updates on three different problems. For all methods considered, the VB variations consistently performed similar or better than the FB variations in terms of convergence rate. We see that even using the crude approximation for GSD with VB ($D_{b,i} = L_i$), it often outperforms GS-VB and GSL-VB. Details on the full set of methods used in this experiment are in Table~\ref{tab4}.}}
\label{fig:exp2}
\end{figure}

We plot selected results of doing this experiment in Figure~\ref{fig:exp2}. These experiments showed the following interesting trends:
\begin{itemize}
\item \textbf{There is a larger advantage to VB \blu{in terms of convergence rate} with matrix updates}. When using matrix updates, the basic GS-VB method \blu{converged faster} than even the most effective GSQ-FB rule for smaller block sizes.
\item \textbf{With FB, there is little advantage to GSD/GSQ}. Although the GSL rule consistently improved over the classic GS rule, we did not see any advantage to using the more-advanced GSD or GSQ rules when using FB.
\item \textbf{GSD \blu{can converge substantially faster} than the classic GS rule with VB} for certain problems (\blu{while the performance was always at least as good}). Despite the use of a crude \blu{diagonal approximation}, the GSD rule often outperformed the classic GS rule. The GSD-VB method (which uses $D_{\blu{b},i} = L_i$) \blu{also consistently performed similar or better than} the GSL-VB method (which uses a different diagonal scaling with a tighter bound), despite the GSL-VB having better performance when using gradient updates.
\item \textbf{GSQ \blu{converged slightly faster than} GSD with VB and large blocks}. Although the GSQ-VB rule performed the best across all experiments in terms of number of iterations, the difference was more noticeable for large block sizes. However, this did not offset its high cost in any experiment. We also experimented with OMP instead of IHT, and found it gave a small improvement, but the iterations were substantially more expensive.
\end{itemize}
Putting the above together, with matrix updates our experiments indicate that \blu{among greedy rules} the GSD rule seems to  provide good performance in all settings. We would only recommend using the GSQ rule in settings where we can use VB and where operations involving the objective $f$ are much more expensive than running an IHT or OMP method. We performed experiments with different block partition strategies for FB, but found that when using matrix updates the partitioning strategy did not make a big difference for cyclic, random, or greedy rules.

In~\ref{append:greedyNewtonMatrix} we repeat this experiment for the non-quadratic objectives using the Newton direction and a backtracking line-search to set the step-size (see Figure~\ref{fig:exp2LS}), as discussed in Sections~\ref{sec:line-search} and~\ref{sec:newton}. For both datasets, the Newton updates resulted in \blu{faster convergence} than the matrix updates. This indicates that we should prefer classic Newton updates over the more recent matrix updates for non-quadratic objectives where computing the sub-block of the Hessian is tractable.

\subsection{Message-Passing Updates}
\label{exp:message}

We next seek to quantify the effect of using message-passing to efficiently implement exact updates for quadratic functions, as discussed in Section~\ref{sec:messagepassing}. For this experiment, we focused on the lattice-structured dataset D and the unstructured but sparse dataset E. These are both quadratic objectives with high treewidth, but that allow us to find large forest-structured induced subgraphs. We compared the following strategies to choose the block: greedily choosing the best general unstructured blocks using GS (General), cycling between blocks generated by the greedy graph colouring algorithm of Section~\blu{\ref{sec:redblack}} (Red Black), cycling between blocks generated by the greedy forest-partitioning algorithm of Section~\ref{sec:treePart} (Tree Partitions), greedily choosing a tree using the algorithm of Section~\ref{sec:treeApprox} (Greedy Tree), and growing a tree randomly using the same algorithm (Random Tree). \blu{For the lattice-structured Dataset D, the partitioning algorithms (for Red Black and Tree Partitions) proceed through the variables in order which generates partitions similar to those shown in Figure~\ref{fig:FT}. For the unstructured Dataset E, we apply the partitioning strategies of Section~\ref{sec:treePart} using both a random ordering (Random) and by sorting the Lipschitz constants $L_i$ (Lipschitz).}
Since the cost of the exact update for tree-structured methods is $O(n)$, for the unstructured blocks we chose a block size of $b_k = n^{1/3}$ to make the costs comparable (since the exact solve is cubic in the block size for unstructured blocks). \blu{In Table~\ref{tab:figure6}, we list the full set of methods used in Figure~\ref{fig:exp6}.}

\begin{sidewaystable}
    \footnotesize
    \centering
    \begin{tabular}{@{}ll||l}
    &\multicolumn{1}{c||}{{}}   & \multicolumn{1}{c}{{\bf Blocking Strategy}}\\[2ex]
 \hline \hline
    \multirow{18}*{\rotatebox{90}{{\bf Block Selection Rule}}}  
&\begin{tabular}{@{}l@{}}General \end{tabular}
&\begin{tabular}{@{}l@{}}Select the best general unstructured block \\ using the GS-VB rule from~\ref{tab4}. \end{tabular}
 \\ [2ex]\cline{2-3} 
&\begin{tabular}{@{}l@{}}Random Tree \end{tabular} 
&\begin{tabular}{@{}l@{}}Grow a tree randomly using the algorithm of \\Section~\ref{sec:treeApprox}. \end{tabular} 
	\\ [2ex]\cline{2-3}
&\begin{tabular}{@{}l@{}}Greedy Tree \end{tabular} 
&\begin{tabular}{@{}l@{}}Greedily choose a tree using the algorithm of \\Section~\ref{sec:treeApprox}. \end{tabular}
	\\ [2ex]\cline{2-3}
&\begin{tabular}{@{}l@{}}Red Black \end{tabular}  
&\begin{tabular}{@{}l@{}}Cycle between blocks generated by the \\greedy graph colouring algorithm of Section~\ref{sec:redblack}.\\ This requires choosing an ordering of the variables. \\
For the lattice-structured data we used a colum-major ordering. \\For the unstructured data we considered two approaches:\\ 
\textbf{Random}:  Use a random order. \\
\textbf{Lipschitz}: Proceed through the vertices by descending Lipschitz values.
\end{tabular}
	\\[4ex]\cline{2-3}
&\begin{tabular}{@{}l@{}}Tree Partitions \end{tabular}
&\begin{tabular}{@{}l@{}}Cycle between blocks generated by the \\greedy forest-partitioning algorithm of Section~\ref{sec:treePart}.\\ This requires choosing an ordering of the variables. \\
For the lattice-structured data we used a colum-major ordering. \\For the unstructured data we considered two approaches:\\ 
\textbf{Random}:  Use a random order. \\
\textbf{Lipschitz}: Proceed through the vertices by descending Lipschitz values. \end{tabular}
 \hspace{1em}  \\\cline{2-3}
    \end{tabular}
    \caption{\blu{Summary of blocking strategies used in Figure 6. For the algorithms used for Red Black and Tree Partitions blocking strategies, we consider three variations of ordering strategies in Step 1.}}
    \label{tab:figure6}
\end{sidewaystable}

\begin{figure}
\centering
\includegraphics[width=.45\textwidth]{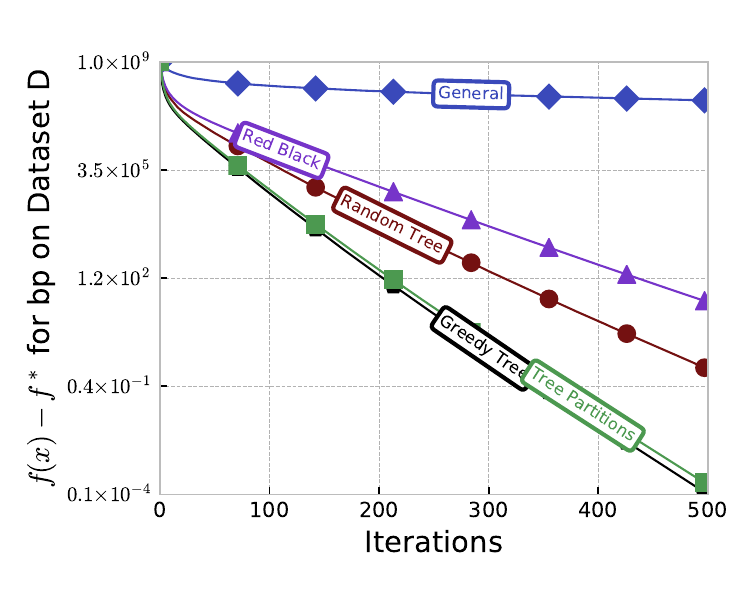}
\includegraphics[width=.45\textwidth]{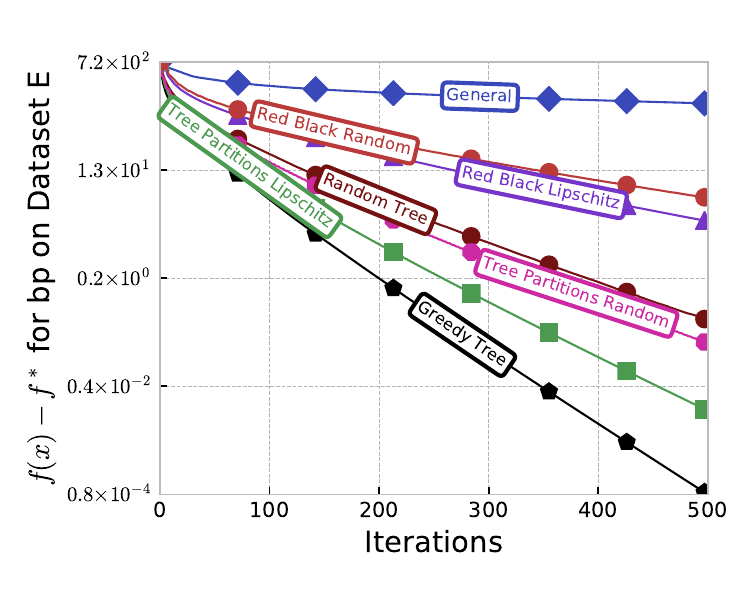}
\caption{\blu{Comparison of different blocking strategies on two quadratic graph-structured problems when using optimal updates. While the Random Tree blocking strategy performed better than the Red Black method, overall the Greedy Tree and Tree Partitions strategies performed best on these structured problems. We note that cycling through the vertices according to descending Lipschitz values for Red Black and Tree Partitions gives us better results than cycling through the vertices in random order. Details on the full set of methods used in this experiment are in Table~\ref{tab:figure6}.}}
\label{fig:exp6}
\end{figure}

We plot selected results of doing this experiment in Figure~\ref{fig:exp6}. Here, we see that even the classic red-black ordering outperforms using general unstructured blocks (since  general blocks must use such small block sizes). The tree-structured blocks perform even better, and in the unstructured setting our greedy approximation of the GS rule under variable blocks \blu{converges faster than} the other strategies. However, our greedy tree partitioning method also performs well. For the lattice-structured data it \blu{converged at a similar rate} to the greedy approximation, while for the unstructured data it \blu{converged faster than} all methods except greedy (and performed better when sorting by the Lipschitz constants than using a random order).

\subsection{Proximal Updates}
\label{sec:experimentsProx}

Our final experiment demonstrates the manifold identification and superlinear/finite convergence properties of the greedy BCD method as discussed in Section~\ref{sec:exactupdates} for a sparse non-negative constrained L1-regularized least-squares problem using Dataset A. In particular, we compare the performance of a projected gradient update with $L_b$ step-size, a projected Newton (PN) solver with line-search as discussed in Section~\ref{sec:superlinear} and the two-metric projection (TMP) update as discussed in Section~\ref{sec:tmp} when using fixed (FB) and variable (VB) blocks of different sizes ($|b|\in\{5,50,100\}$). We use a regularization constant of $\lambda = 50,000$ to encourage a high level of sparsity resulting in an optimal solution $x^*$ with 51 non-zero variables. In Figure~\ref{fig:exp7a} we indicate active-set identification with a star and show that all approaches eventually identify the active-set. We see that TMP \blu{converges at nearly the same speed as} projected Newton for all block sizes \blu{(despite its lower iteration cost)} and both \blu{converge faster} than performing gradient updates. For a block size of 100, we get finite convergence using projected Newton and TMP updates. \blu{In Table~\ref{tab:figure7}, we list details for each method used in Figure~\ref{fig:exp7a}.}

\begin{sidewaystable}
    \centering
    \footnotesize
    \begin{tabular}{@{}ll|l||*{5}{l|l}}
     &\multicolumn{2}{c}{} &\multicolumn{2}{c}{{\bf Blocking Strategy}} \\ [1em]
     & \multicolumn{1}{l}{} & \multicolumn{1}{l||}{Update Formula} &\multicolumn{1}{c}{Fixed (FB)}  &\multicolumn{1}{c}{Variable (VB)}   \\ 
 \hline \hline
    \multirow{8}*{\rotatebox{90}{{\bf Block Update Rule}}}  
   &\begin{tabular}{@{}l@{}}Projected Gradient (PG) \end{tabular}
   &\begin{tabular}{@{}l@{}}$x_b^{k+1} = \left[x_b - \frac{1}{L_b}\nabla_{b} f(x^k)\right]^+$ \end{tabular} 
   &\begin{tabular}{@{}l@{}}PG-FB: \\ \hspace{1em} sort $L_i$, \\ \hspace{1em}group into equal sized blocks, \\ \hspace{1em}select blocks using GSL-q rule \end{tabular}
   &\begin{tabular}{@{}l@{}}PG-VB: \\ \hspace{1em} select the $\tau$ maximal coordinates  \\ \hspace{1em}according to the GSD-q rule \\ \hspace{1em} with $D_{b,i} = L_i$ \end{tabular}  
	\\ \cline{2-5}
   &\begin{tabular}{@{}l@{}}Projected Newton (PN) \end{tabular}
   &\begin{tabular}{@{}l@{}}Solve quadratic program, see \eqref{eq:proxNewton} \\ (using line-search from Section~\ref{sec:superlinear}) \end{tabular}   
   &\begin{tabular}{@{}l@{}}PN-FB: \\ \hspace{1em} sort $L_i$, \\ \hspace{1em}group into equal sized blocks, \\ \hspace{1em}select blocks using GSL-q rule \end{tabular}
   &\begin{tabular}{@{}l@{}}PN-VB: \\ \hspace{1em} select the $\tau$ maximal coordinates \\ \hspace{1em} according  to the GSD-q rule \\ \hspace{1em} with $D_{b,i} = L_i$ \end{tabular}  
       \\ [2em] \cline{2-5} 
   &\begin{tabular}{@{}l@{}}Two-Metric Projection (TMP)  \end{tabular}
   &\begin{tabular}{@{}l@{}}Solve linear system and project, \\ see Section~\ref{sec:tmp} \end{tabular} 
   &\begin{tabular}{@{}l@{}}TMP-FB: \\ \hspace{1em} sort $L_i$, \\ \hspace{1em}group into equal sized blocks, \\ \hspace{1em}select blocks using GSL-q rule \end{tabular}
   &\begin{tabular}{@{}l@{}}TMP-VB: \\ \hspace{1em} select the $\tau$ maximal coordinates \\ \hspace{1em} according  to the GSD-q rule \\ \hspace{1em} with $D_{b,i} = L_i$ \end{tabular}  
    \end{tabular}
    \caption{\blu{Summary of block update rules used in Figure 7.}}
    \label{tab:figure7}
\end{sidewaystable}

\begin{figure}[!h]
\centering
\includegraphics[width=\textwidth]{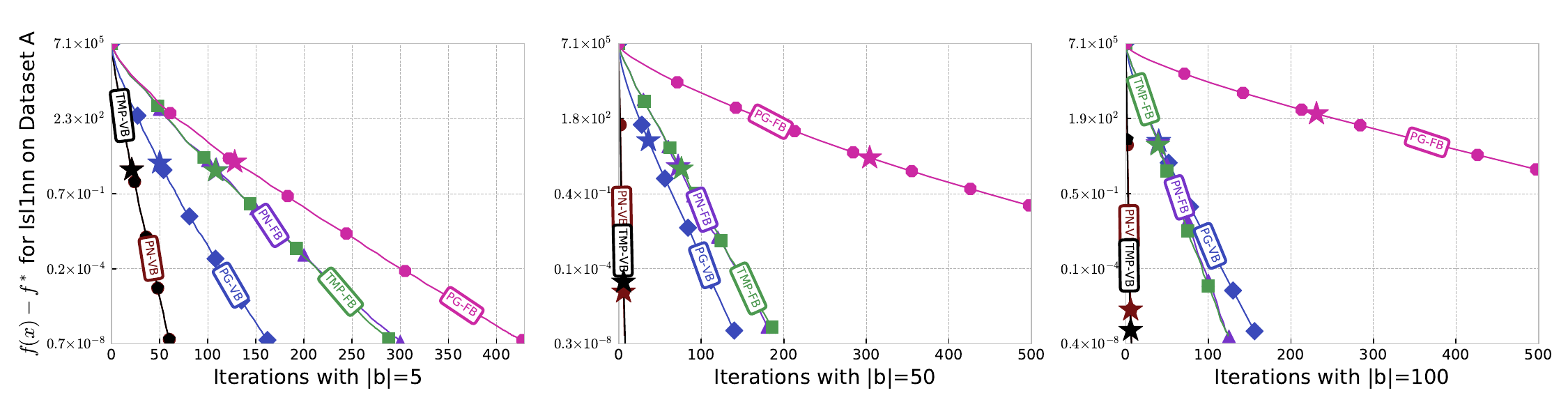}
\caption{\blu{Comparison of different proximal updates when using greedy fixed and variable blocks of different sizes. The star on each curve indicates active-set identification. We see that TMP and projected Newton converge faster than using gradient updates, while TMP achieves a similar convergence speed as projected Newton despite having a lower iteration cost. Further, the VB variations consistently outperformed all the FB variations, except for the largest block size of 100, where TMP-FB and PN-FB beat PG-VB. Details for each method used in this experiment are in Table~\ref{tab:figure7}. }}
\label{fig:exp7a}
\end{figure}

In~\ref{append:activeSetExp} we repeat this experiment for random block selection and show that for such a sparse problem multiple iterations are often required before progress is made due to the repetitive selection of variables that are already zero/active.

\section{Discussion}
\label{sec:discussion}


\blu{An important issue that we have not discussed is how to choose the block size $\tau$. BCD tends to converge faster with larger blocks, but this must be balanced against the higher iteration cost of using larger blocks. In some cases the blocks are dictated by a certain problem structure such as group L1-regularization where the objective is block separable or  multi-class logistic regression where updating a suitable block has the same cost as updating a single coordinate. As a practical recommendation, we suggest choosing the largest block size where we can still view the update as ``inexpensive''. This is because it is typically easy to estimate iteration costs in practice while predicting the convergence rate in practice is more difficult. As an example, if the cost of the update is $O(\tau^3)$ then blocks of size 5 might be preferred over blocks of size 1 while blocks of size 10000 could lead to an unnacceptably slow iteration cost.}

In this work we focused on non-accelerated BCD methods. However, we expect that our conclusions are likely to also apply for accelerated BCD methods~\citep{fercoq2013accelerated}. \blu{We thus expect this work to be complimentary to recent work on accelerated greedy coordinate descent methods~\citep{locatello2018matching,lu2018accelerating}.}
Similarly, while we focused on the setting of serial computation, we expect that our conclusions will give insight into developing more efficient parallel and distributed BCD methods~\citep{richtarik2016parallel}.

Although our experiments indicate that our choice of the diagonal matrices $D$ within the GSD rule provides a consistent improvement, this choice is clearly sub-optimal.
A future direction is to find a generic strategy to construct better diagonal matrices, and work on ESO methods could potentially be adapted for doing this~\citep{qu2016coordinate}. This could be in the setting where we are given knowledge of the Lipschitz constants, but a more-interesting idea is to construct these matrices online as the algorithm runs. 

The GSQ rule can be viewed as a greedy rule that incorporates more sophisticated second-order information than the simpler GS and GSL rules. In preliminary experiments, we also considered selection rules based on the cubic regularization bound. However, these did not seem to converge more quickly than the existing rules in our experiments, and it is not obvious how one could efficiently implement such second-order rules.

We focused on BCD methods that approximate the objective function at each step by globally bounding higher-order terms in a Taylor expansion. However, we would expect more progress if we could bound these locally in a suitably-larger neighbourhood of the current iteration. Alternately, note that bounding the Taylor expansion is not the only way to upper bound a function. For example,~\citet{khan2012variational} discusses a variety of strategies for bounding the binary logistic regression loss and indeed proves that other bounds are tighter than the Taylor expansion (``B{\"o}hning'') bound that we use. It would be interesting to explore the convergence properties of BCD methods whose bounds do not come from a Taylor expansion.

While we focused on the case of trees, there are message-passing algorithms that allow graphs with cycles~\citep{rose1970triangulated,srinivasan2015}. The efficiency of these methods depends on the ``treewidth'' of the induced subgraph, where if the treewidth is small (as in trees) then the updates are efficient, and if the treewidth is large (as in fully-connected graphs) then these do not provide an advantage. Treewidth is related to the notion of ``chordal'' graphs (trees are special cases of chordal graphs) and chordal embeddings which have been exploited for  matrix problems like covariance estimation~\citep{dahl2008covariance} and semidefinite programming~\citep{sun2014decomposition,vandenberghe2015chordal}. 
Considering ``treewidth 2'' or ``treewidth 3'' blocks would give more progress than our tree-based updates, although it is NP-hard to compute the treewidth of a general graph (but it is easy to upper-bound this quantity by simply choosing a random elimination order).

As opposed to structural constraints like requiring the graph to be a tree, it is now known that message-passing algorithms can solve linear systems with other properties like diagonal dominance or ``attractive'' coefficients~\citep{malioutov2006walk}. There also exist specialized linear-time solvers for Laplacian matrices~\citep{kyng2016approximate}, and these can be used within BCD methods for certain structures.
It would also be interesting to explore whether approximate message-passing algorithms which allow general graphs~\citep{malioutov2006walk} can be used to improve optimization algorithms.

\acks{We would like to thank Dr.\ Warren Hare for fruitful conversations that helped with Section~\ref{sec:exactupdates}. We would also like to thank Francis Bach, Coralia Cartis, Aleksandar Dogandzic, Chen Greif, Ives Macedo, Anastasia Podosinnikova, Suvrit Sra, and the anonymous reviewers for valuable discussions and feedback. This work was supported by a Discovery Grant from the Natural Sciences and Engineering Research Council of Canada (NSERC). Julie Nutini and Issam Laradji are funded by UBC Four-Year Doctoral Fellowships (4YFs).}


\appendix
\gdef\thesection{Appendix \Alph{section}}

\section{Cost of  Multi-Class Logistic Regression}
\phantomsection
\label{app:sparsesoftmax}

The typical setting where we expect coordinate descent to outperform gradient descent is when the cost of one gradient descent iteration is similar to the cost of updating all variables via coordinate descent. It is well known that for the binary logistic regression objective, one of the most ubiquitous models in machine learning, coordinate descent with uniform random selection satisfies this property. We previously showed that this property is also satisfied for the GS rule in the case of logistic regression, provided that the data is sufficiently sparse~\citep{nutini2015}. 

In this section we consider \emph{multi-class} logistic regression. We first analyze the cost of gradient descent on this objective and how randomized coordinate descent is efficient for any sparsity level. Then we show that a high sparsity level is not sufficient for the GS rule to be efficient for this problem, but that it is efficient if we use a particular set of fixed blocks.

\subsection{Cost of Gradient Descent}

The likelihood for a single training example $i$ with features $a_i \in \R^d$ and a label $b_i \in \{1,2,\dots,k\}$ is given by
\[
p(b_i \; | \; a_i, X) = \frac{\exp(x_{b_i}^Ta_i)}{\sum_{c=1}^k\exp(x_c^Ta_i)},
\]
where $x_c$ is column $c$ of our matrix of parameters $X \in \R^{d \times k}$ (so the number of parameters $n$ is $dk$). To maximize the likelihood over $m$ independent and identically-distributed training examples we minimize the negative log-likelihood,
\begin{equation}
\phantomsection
\label{eq:MLR}
f(X) = \sum_{i=1}^m\left[-x_{b_i}^Ta_i + \log\left(\sum_{c=1}^k\exp(x_c^Ta_i)\right)\right],
\end{equation}
which is a convex function.
The partial derivative of this objective with respect to a particular $X_{jc}$ is given by
\begin{align}
\phantomsection
\label{eq:partialMLR}
\frac{\partial }{\partial X_{jc}} f(X) 
& =  -\sum_{i=1}^m a_{ij}\left[I(b_i = c) - \frac{\exp(x_c^Ta_i)}{\sum_{c'=1}^k\exp(x_{c'}^Ta_i)}\right],
\end{align}
where $I$ is a $0/1$ indicator variable and $a_{ij}$ is feature $j$ for training example $i$. We use $A$ to denote a matrix where row $i$ is given by $a_i^T$.
To compute the full gradient, the operations which depend on the size of the problem are:
\begin{enumerate}
\item Computing $x_c^Ta_i$ for all values of $i$ and $c$.
\item Computing the sums $\sum_{c=1}^k\exp(x_c^Ta_i)$ for all values of $i$. 
\item Computing the partial derivative sums~\eqref{eq:partialMLR} for all values of $j$ and $c$.
\end{enumerate}
The first step is the result of the matrix multiplication $AX$, so if $A$ has $z$ non-zeroes then this has a cost of $O(zk)$ if we compute it using $k$ matrix-vector multiplications. The second step costs $O(mk)$, which under the reasonable assumption that $m \leq z$ (since each row usually has at least one non-zero) is also in $O(zk)$. The third step is the result of a matrix multiplication of the form $A^TR$ for a (dense) $m$ times $k$ matrix $R$ (whose elements have a constant-time cost to compute given the results of the first two steps), which also costs $O(zk)$ giving a final cost of $O(zk)$.

\subsection{Cost of Randomized Coordinate Descent}

Since there are $n=dk$ variables, we want our coordinate descent iterations to be $dk$-times faster than the gradient descent cost of $O(zk)$. Thus, we want to be able to implement coordinate descent iterations for a cost of $O(z/d)$ (noting that we always expect $z \geq d$ since otherwise we could remove some columns of $A$ that only have zeroes). The key to doing this for randomized coordinate descent is to track two quantities:
\begin{enumerate}
\item The values $x_c^Ta_i$ for all $i$ and $c$.
\item The values $\sum_{c'=1}^k\exp(x_{c'}^Ta_i)$ for all $i$.
\end{enumerate}
Given these values we can compute the partial derivative in $O(z/d)$ in expectation, because this is the expected number of non-zero values of $a_{ij}$ in the partial derivative sum~\eqref{eq:partialMLR} ($A$ has $z$ total non-zeroes and we are randomly choosing one of the $d$ columns).  Further, after updating a particular $X_{jc}$ we can update the above quantities for the same cost:
\begin{enumerate}
\item We need to update $x_c^Ta_i$ for the particular $c$ we chose for the examples $i$ where $a_{ij}$ is non-zero for the chosen value of $j$. This requires an $O(1)$ operation (subtract the old $x_{jc}a_{ij}$ and add the new value) for each non-zero element of column $j$ of $A$. Since $A$ has $z$ non-zeroes and $d$ columns, the expected number of non-zeroes is $z/d$ so this has a cost of $O(z/d)$.
\item We need to update $\sum_{c'=1}^k\exp(x_{c'}^Ta_i)$ for all $i$ where $a_{ij}$ is non-zero for our chosen $j$. Since we expect $z/d$ non-zero values of $a_{ij}$, the cost of this step is also $O(z/d)$.
\end{enumerate}
Note that BCD is also efficient since if we update $\tau$ elements, the cost is $O(z\tau/d)$ by just applying the above logic $\tau$ times.  In fact, the second step and computing the final partial derivative has some redundant computation if we update multiple $X_{jc}$ with the same $c$, so we might have a small performance gain in the block case.

\subsection{Cost of Greedy Coordinate Descent (Arbitrary Blocks)}

The cost of greedy coordinate descent is typically higher than randomized coordinate descent since we need to track \emph{all} partial derivatives. However, consider the  case where each row has at most $z_r$ non-zeroes and each column has at most $z_c$ non-zeroes. In this setting we previously showed that for binary logistic regression it is possible to track all partial derivatives for a cost of $O(z_rz_c)$, and that we can track the maximum gradient value at the cost of an additional logarithmic factor~\citep[Appendix~A]{nutini2015}.\footnote{Note that the purpose of the quantity $z_rz_c$ is to serve as a potentially-crude upper bound on the maximum degree in the dependency graph we describe in Section~\ref{sec:messagepassing}. Any tighter bound on this degree would yield a tighter upper bound on the runtime.} Thus, greedy coordinate selection has a similar cost to uniform selection when the sparsity pattern makes $z_rz_c$ similar to $z/d$ (as in the case of a grid-structured dependency graph like Figure~\ref{fig:lattice}).

Unfortunately, having $z_rz_c$ similar to $z/d$ is not sufficient in the multi-class case. In particular, the cost of tracking all the partial derivatives after updating an $X_{jc}$  in the multi-class case can be broken down as follows:
\begin{enumerate}
\item We need to update $x_c^Ta_i$ for the examples $i$ where $a_{ij}$ is non-zero. Since there are at most $z_c$ non-zero values of $a_{ij}$ over all $i$ the cost of this is $O(z_c)$.
\item We need to update $\sum_{c=1}^k\exp(x_{c}^Ta_i)$ for all $i$ where $a_{ij}$ is non-zero. Since there are at most $z_c$ non-zero values of $a_{ij}$ the cost of this is $O(z_c)$.
\item We need to update the partial derivatives $\partial f/\partial X_{jc}$ for all $j$ and $c$. Observe that each time we have $a_{ij}$ non-zero, we change the partial derivative with respect to all features $j'$ that are non-zero in the example $i$ and we must update all classes $c'$ for these examples. Thus, for the $O(z_c)$ examples with a non-zero feature $j$ we need to update up to $O(z_r)$ other features for that example and for each of these we need to update all $k$ classes. This gives a cost of $O(z_r z_c k)$.
\end{enumerate}
So while in the binary case we needed $O(z_rz_c)$ to be comparable to $O(z/d)$ for greedy coordinate descent to be efficient, in the multi-class case we now need $O(z_rz_ck)$ to be comparable to $O(z/d)$ in the multi-class case. This means that not only do we need a high degree of sparsity but we also need the number of classes $k$ to be small for greedy coordinate descent to be efficient for arbitrary blocks.

\subsection{Cost of Greedy Coordinate Descent (Fixed Blocks)}

Greedy rules are more expensive in the multi-class case because whenever we change an individual variable $X_{jc}$, it changes the partial derivative with respect to $X_{j'c'}$ for a set of $j'$ values and for \emph{all} $c'$. However, we can improve the efficiency of greedy rules by using a special choice of fixed blocks that reduces the number of $j'$ values. In particular, BCD is more efficient for the multi-class case if we put  $X_{jc'}$ for all $c'$ into the same block. In other words, we ensure that each row of $X$ is part of the same block so that we apply BCD to rows rather than in an unstructured way. Below we consider the cost of updating the needed quantities after changing an entire row of $X_{jc}$ values:
\begin{enumerate}
\item Since we are updating $k$ elements, the cost of updating the $x_c^Ta_i$ is $k$-times larger giving $O(z_c k)$ when we update a row. 
\item Similarly, the cost of updating the sums $\sum_{c=1}^k\exp(x_{c}^Ta_i)$ is $k$-times larger also giving $O(z_c k)$.
\item Where we gain in computation is the cost of computing the changed values of the partial derivatives $\partial f/\partial X_{jc}$. As before, each time we have $a_{ij}$ non-zero for our particular row $j$, we change the partial derivative with respect to all other $j'$ for this example and with respect to each class $c'$ for these $j'$. Thus, for the $O(z_c)$ examples with a non-zero feature $j$ we need to update up to $O(z_r)$ other features for that example and for each of these we need to update all $k$ classes. However, since $j$ is the same for each variable we update, we only have to do this once which gives us a cost of $O(z_r z_c k)$.
\end{enumerate}
So the cost to update a row of the matrix $X$ is $O(z_rz_c k)$, which is the same cost as only updating a single element.
Considering the case of updating individual rows, this gives us $d$ blocks so in order for BCD to be efficient it must be $d$-times faster than the gradient descent cost of $O(zk)$. Thus, we need a cost of $O(zk/d)$ per iteration. This is achieved if  $O(z_r z_c)$ to be similar to $O(z/d)$, which is the same condition we needed  in the binary case.

\section{Blockwise Lipschitz Constants}
\phantomsection
\label{append:blockLipschitz}

In this section we show how to derive lower-bounds on the block-Lipschitz constants of the gradient and Hessian for several common problem settings.
We will use that a twice-differentiable function has an $L$-Lipschitz continuous gradient if and only if the absolute eigenvalues of its Hessian are upper-bounded by $L$,
\[
	\| \nabla f(x) - \nabla f(y) \| \le L \| x - y \| \iff \nabla^2 f(x) \preceq LI.
\]
This implies that when considering blockwise constants we have
\[
	\norm{\nabla_b f(x+U_bd) - \nabla_b f(x)}  \iff \nabla_{bb}^2 f(x) \preceq L_b I,
\]
\blu{for some $L_b \leq L$}.
Thus, bounding the blockwise eigenvalues of the Hessian bounds the blockwise Lipschitz constants of the gradient. We also use that this equivalence extends to the case of general quadratic norms,
\[
\norm{\nabla_b f(x+U_bd) - \nabla_b f(x)}_{H_b^{-1}} \leq \norm{d}_{H_b} \iff \nabla_{bb}^2 f(x) \preceq H_b.
\]

\subsection{Quadratic Functions}

Convex quadratic functions have the form
\[
f(x) = \half x^TAx + c^Tx,
\]
for a positive semi-definite matrix $A$ and vector $c$. For all $x$ the  Hessian with respect to block $b$ is given by the sub-matrix of $A$,
\[
\nabla_{bb}^2 f(x) = A_{bb}.
\]
Thus, we have that $L_b$ is given by the maximum eigenvalue of the submatrix, $L_b = \norm{A_b}$ (the operator norm of the submatrix). In the special case where $b$ only contains a single element $i$, we have that $L_i$ is given by the absolute value of the diagonal element, $L_i = |A_{ii}|$. If we want to use a general quadratic norm we can simply take $H_b = A_{bb}$, which is cheaper to compute than the $L_b$ (since it does not require an eigenvalue calculation).

\subsection{Least Squares}

The least squares objective has the form
\[
f(x) = \half \norm{Ax - c}^2,
\]
for a matrix $A$ and vector $c$. This is a special case of a quadratic function, where the Hessian is given by
\[
\nabla^2 f(x) = A^TA.
\]
This gives us that $L_b = \norm{A_b}^2$ (where $A_b$ is the matrix containing the columns $b$ of $A$). In the special case where the block has a single element $j$, observe that $L_j = \sum_{i=1}^m a_{ij}^2$ (sum of the squared values in column $j$) so we do not need to solve an eigenvalue problem. When using a quadratic norm we can take $H_b = A_b^TA_b$ which similarly does not require solving an eigenvalue problem.

\subsection{Logistic Regression}

The likelihood of a single example in a logistic regression model is given by 
\[
	p(b_i \; | \; a_i,x) = \frac{1}{1 + \exp(-b_i x^T a_i)},
\]
where each $a_i \in \R^d$ and $b_i \in \{ -1 ,1 \}$. To maximize the likelihood over $m$ examples (sampled independently) we minimize the negative log-likelihood,
\[
	f(x) = \sum_{i = 1}^m \log(1 + \exp(-b_i x^T a_i)).
\]
Using $A$ as a matrix where row $i$ is given by $a_i^T$ and defining $h_i(x) = p(b_i \; | \; a_i,x)$, we have that
\begin{align*}
\nabla^2 f(x) & = \sum_{i=1}^m h_i(x)(1-h_i(x))a_ia_i^T\\
& \preceq 0.25\sum_{i=1}^m a_ia_i^T\\
& = 0.25A^TA.
\end{align*}
The generalized inequality above is the binary version of the bound of~\citet{bohning1992multinomial}. This bound can be derived by observing that $h_i(x)$ is in the range $(0,1)$, so the quantity $h_i(x)(1-h_i(x))$ has an upper bound of $0.25$. This result means that we can use $L_b = 0.25\norm{A_b}^2$ for block $b$, $L_j = 0.25\sum_{i=1}^m a_{ij}^2$ for single-coordinate blocks, and $H_b = 0.25A_b^TA_b$ if we are using a general quadratic norm (notice that computing $H_b$ is again cheaper than computing $L_b$).

\subsection{Multi-Class Logistic Regression}

The Hessian of the multi-class logistic regression objective~\eqref{eq:MLR} with respect to parameter vectors $x_c$ and $x_{c'}$ can be written as
\[
\frac{\partial^2}{\partial x_c \partial x_{c'}} f(X) = \sum_{i=1}^m h_{i,c}(X)(I(c = c') - h_{i,c'}(X))a_ia_i^T,
\]
where similar to the binary logistic regression case we have defined $h_{i,c} = p(c \; | \; a_i, X)$. This gives the full Hessian the form
\begin{align*}
\nabla^2 f(X) & = \sum_{i=1}^m H_i(X) \otimes a_ia_i^T,
\end{align*}
where we used $\otimes$ to denote the Kronecker product and where element $(c,c')$ of the $k$ by $k$ matrix $H_i(X)$ is given by $h_{i,c}(X)(I(c = c') - h_{i,c'}(X))$. The bound of~\citet{bohning1992multinomial} on this matrix is that
\[
H_i(X) \preceq \frac{1}{2}\left(I - \frac{1}{k}11^T\right),
\]
where $1$ is a vector of ones while recall that $k$ is the number of classes. Using this we have
\begin{align*}
\nabla^2 f(X) & \preceq \sum_{i=1}^m \frac{1}{2}\left(I - \frac{1}{k}11^T\right) \otimes a_ia_i^T\\
& = \frac{1}{2}\left(I - \frac{1}{k}11^T\right)\otimes \sum_{i=1}^m a_ia_i^T\\
& = \frac{1}{2}\left(I - \frac{1}{k}11^T\right) \otimes A^TA.
\end{align*}
As before we can take submatrices of this expression as our $H_b$, and we can take eigenvalues of the submatrices as our $L_b$. However, due to the $1/k$ factor we can actually obtain tighter bounds for sub-matrices of the Hessian that do not involve at least two of the classes. In particular, consider a sub-Hessian involving the variables only associated with $k'$ classes for $k' < k$. In this case we can replace the $k$ by $k$ matrix $(I - (1/k)11^T)$ with the $k'$ by $k'$ matrix $(I - (1/(k'+1))11^T)$. The ``$+1$'' added to $k'$ in the second term effectively groups all the other classes (whose variables are fixed) into a single class (the ``$+1$'' is included in B{\"o}hning's original paper as they fixe $x_k = 0$ and defines $k$ to be one smaller than the number of classes). This means (for example) that we can take $L_j = 0.25\sum_{i=1}^m a_{ij}^2$ as in the binary case rather than the slightly-larger diagonal element $0.5(1-1/k)\sum_{i=1}^m a_{ij}^2$ in the matrix above.\footnote{The binary logistic regression case can conceptually be viewed as a variation on the softmax loss where we fix $x_c=0$ for one of the classes and thus are always only updating variables from class. This gives the special case of $0.5(I - 1/(k+1)11^T)A^TA = 0.5(1 - 0.5)A^TA = 0.25A^TA$, the binary logistic regression bound from the previous section.}

\section{Derivation of GSD Rule}
\phantomsection
\label{append:GSd} 

In this section we derive a progress bound for twice-differentiable convex functions when we choose and update the block $b_k$ according to the GSD rule with $D_{b,i} = L_i\tau$ (where $\tau$ is the maximum block size). We start by using the Taylor series representation of $f(x^{k+1})$ in terms of $f(x^k)$ and some $z$ between $x^{k+1}$ and $x^k$ (keeping in mind that these only differ along coordinates in $b_k$),
\begin{align*}
f(x^{k+1}) & = f(x^k) + \langle \nabla f(x^k),x^{k+1}-x^k\rangle + \frac{1}{2}(x^{k+1}-x^k)^T\nabla_{b_kb_k}^2 f(z)(x^{k+1}-x^k)\\
& \leq  f(x^k) + \langle \nabla f(x^k),x^{k+1}-x^k\rangle + \frac{|b_k|}{2}\sum_{i \in b_k}\nabla_{ii}^2f(z)(x_i^{k+1}- x_i^k)^2\\
& \leq f(x^k) + \langle \nabla f(x^k),x^{k+1}-x^k\rangle + \frac{\tau}{2}\sum_{i \in b_k}\nabla_{ii}^2f(z)(x_i^{k+1}- x_i^k)^2\\
& \leq  f(x^k) + \langle \nabla f(x^k),x^{k+1}-x^k\rangle + \frac{\tau}{2}\sum_{i \in b_k}L_i(x_i^{k+1}- x_i^k)^2,
\end{align*}
where the first inequality follows from convexity of $f$ which implies that $\nabla_{b_kb_k}^2 f(x^k)$ is positive semi-definite and by Lemma~1 of Nesterov's coordinate descent paper~\citep{nesterov2012efficiency}. The second inequality follows from the definition of $\tau$ and the third follows from the definition of $L_i$. Now using our choice of $D_{b,i} = L_i\tau$ in the update we have for $i \in b_k$ that
\[
x_{i}^{k+1} = x_{i}^k - \frac{1}{L_i\tau}\nabla_i f(x^k),
\]
which yields
\begin{align*}
f(x^{k+1}) & \leq  f(x^k) - \frac{1}{2\tau}\sum_{i \in b_k}\frac{|\nabla_i f(x^k)|^2}{L_i}\\
& = f(x^k) - \frac{1}{2}\max_b \sum_{i \in b}\frac{|\nabla_i f(x^k)|^2}{L_i\tau}\\
& = f(x^k) - \norm{\nabla f(x^k)}_{\mathcal{B}}^2.
\end{align*}
The first equality uses that we are selecting $b_k$ using the GSD rule with $D_{b,i} = {L_i\tau}$ and the second inequality follows from the definition of of the mixed norm $\norm{\cdot}_\mathcal{B}$ from Section~\ref{subsec:convergence} with $H_b = D_b$. This progress bound implies that the convergence rate results in that section also hold.

\section{Efficiently Testing the Forest Property}
\phantomsection
\label{app:forest}

In this section we give a method to test whether adding a node to an existing forest maintains the forest property. In this setting our input is an undirected graph $G$ and a set of nodes $b$ whose induced subgraph $G_b$ forms a forest (has no cycles). Given a node $i$, we want to test whether adding $i$ to $b$ will maintain that the induced subgraph is acyclic. In this section we show how to do this in $O(p)$, where $p$ is the degree of the node $i$.

The method is based on the following  observations:
\begin{itemize}
\item  If the new node $i$ introduces a cycle, then it must be part of the cycle. This follows because $G_b$ is assumed to be acyclic, so no cycles can exist that do not involve $i$.
\item If $i$ introduces a cycle, we can arbitrarily choose $i$ to be the start and end point of the cycle.
\item If the new node $i$ has 1 or fewer neighbours in $b$, then it does not introduce a cycle. With no neighbours it clearly can not be part of a cycle. With one neighbour, we would have to traverse its one edge more than once to have it start and end a path.
\item If the new node $i$ has at least 2 neighbours in $b$ that are part of the same tree, then $i$ introduces a cycle. Specifically, we can construct a cycle as follows: we start at node $i$, go to one of its neighbours, follow a path through the tree to another one of its neighbours in the same tree (such a path exists because trees are connected by definition), and then return to node $i$.
\item If the new node $i$ has at least 2 neighbours in $b$ but they are all in different trees, then $i$ does not introduce a cycle. This is similar to the case where $i$ has only one edge: any path that starts and ends at node $i$ would have to traverse one of its edges more than once (because the disjoint trees are not connected to each other).
\end{itemize}
The above cases suggest that to determine whether adding node $i$ to the forest $b$ maintains the forest property, we only need to test whether node $i$ is connected to two nodes that are part of the same tree in the existing forest. We can do this in $O(p)$ using the following data structures:
\begin{enumerate}
\item For each of the $n$ nodes, a list of the adjacent nodes in $G$. 
\item A set of $n$ labels in $\{0,1,2,\dots,t\}$, where $t$ is the number of trees in the existing forest. This number is set to $0$ for nodes that are not in $b$, is set to $1$ for nodes in the first tree, is set to $2$ for nodes in the second tree, and so on. 
\end{enumerate}
Note that there is no ordering to the labels $\{1,2,\dots,t\}$, each tree is just assigned an arbitrary number that we will use to determine if nodes are in the same tree.
We can find all neighbours of node $i$ in $O(p)$ using the adjacency list, and we can count the number of neighbours in each tree in $O(p)$ using the tree numbers. If this count is at least $2$ for any tree then the node introduces a cycle, and otherwise it does not.

In the algorithms of Section~\ref{sec:treePart} and~\ref{sec:treeApprox}, we also need to update the data structures after adding a node $i$ to $b$ that maintains the forest property. For this update we need to consider three scenarios:
\begin{itemize}
\item If the node $i$ has one neighbour in $b$, we assign it the label of its neighbour.
\item If the node $i$ has no neighbours in $b$, we assign it the label $(t+1)$ since it forms a new tree.
\item If the node $i$ has multiple neighbours in $b$, we need to merge all the trees it is connected to.
\end{itemize}
The first two steps cost $O(1)$, but a naive implementation of the third step would cost $O(n)$ since we could need to re-label almost all of the nodes. Fortunately, we can reduce the cost of this merge step to $O(p)$. This requires a relaxation of the condition that the labels represent disjoint trees. Instead, we only require that nodes with the same label are part of the same tree. This allows multiple labels to be associated with each tree, but using an extra data structure we can still determine if two labels are part of the same tree:
\begin{enumerate}
\setcounter{enumi}{2}
\item A list of $t$ numbers, where element $j$ gives the \emph{minimum node number} in the tree that $j$ is part of.
\end{enumerate}
Thus, given the labels of two nodes we can determine whether they are part of the same tree in $O(1)$ by checking whether their minimum node numbers agree. Given this data structure, the merge step is simple: we arbitrarily assign the new node $i$ to the tree of one of its neighbours, we find the minimum node number among the $p$ trees that need to be merged, and then we use this as the minimum node number for all $p$ trees. This reduces the cost to $O(p)$.

Giving that we can efficiently test the forest property in $O(p)$ for a node with $p$ neighbours, it follows that the total cost of the greedy algorithm from Section~\ref{sec:treeApprox} is $O(n \log n + |E|)$ given the gradient vector and adjacency lists. The $O(n \log n)$ factor comes from sorting the gradient values, and \blu{the sum of the $p$ values is $2|E|$}. If this cost is prohibitive, one could simply restrict the number of nodes that we consider adding to the forest to reduce this time.

\section{Functions Bound Iterates under Strong-Convexity}
\phantomsection
\label{append:strong}

Defining $F(x)$ as $F(x) = f(x) + g(x)$ in the ``smooth plus separable non-smooth'' setting of~\eqref{eq:compositeproblem}, existing works on cyclic~\citep{beck2013convergence} and greedy selection~\citep{nutini2015} of $i_k$ within proximal coordinate descent methods imply that
\begin{equation}
\phantomsection
\label{eq:Frate}
F(x^k) - F(x^*) \leq \rho^k[F(x^0) -  F(x^*)],
\end{equation}
for some $\rho < 1$ when $f$ is strongly-convex. Note that strong-convexity of $f$ implies strong-convexity of $F$, so we have that
\[
F(y) \geq F(x) + \langle s,y-x\rangle + \frac{\mu}{2}\norm{y-x}^2,
\]
where $\mu$ is the strong-convexity constant of $f$ and $s$ is any subgradient of $F$ at $x$. Taking $y=x^k$ and $x = x^*$ we obtain that
\begin{equation}
\phantomsection
\label{eq:Xrate}
F(x^k) \geq F(x^*) + \frac{\mu}{2}\norm{x^k - x^*}^2,
\end{equation}
which uses that $0$ is in the sub-differential of $F$ at $x^*$.
Thus we have that
\[
\norm{x^k - x^*}^2 \leq \frac{2}{\mu}[F(x^k) - F(x^*)] \leq \frac{2}{\mu} \rho^k[F(x^0) - F(x^*)],
\]
which is the type of convergence rate we assume in Section~\ref{sec:maniRate} with $\gamma = \frac{2}{\mu}[F(x^0) - F(x^*)]$. 

\section{Full Experimental Results}
\phantomsection
\label{append:experiments}

In this section we first provide details on the datasets, and then we present our complete set of experimental results.

\subsection{Datasets}

We considered these five datasets:
\begin{enumerate}[label=\Alph*]
\item A least squares problem with a data matrix $A \in \R^{m \times n}$ and  target $b \in \R^{m}$,
\[
\argmin{x \in \R^n} \frac{1}{2}\norm{Ax-b}^2.
\]
We set $A$ to be an $m$ by $n$ matrix with entries sampled from a $\mathcal{N}(0,1)$ distribution (with $m=1000$ and $n=10000$). We then added 1 to each entry (to induce a dependency between columns), multiplied each column by a sample from $\mathcal{N}(0,1)$ multiplied by ten (to induce different Lipschitz constants across the coordinates), and only kept each entry of $A$ non-zero with probability $10\log(m)/m$.
We set $b = Ax + e$, where the entries of $e$ were drawn from a $\mathcal{N}(0,1)$ distribution while we set 90\% of $x$ to zero and drew the remaining values from a $\mathcal{N}(0,1)$ distribution.
\item A binary logistic regression problem of the form
\[
\argmin{x \in \R^n} \sum_{i=1}^n \log(1+\exp(-b_ix^Ta_i)).
\]
We use the data matrix $A$ from  the previous dataset (setting row $i$ of $A$ to $a_i^T$), and $b_i$ to be the sign of $x^Ta_i$ with the $x$ used to generate the previous dataset. We then flip the sign of each entry in $b$ with probability $0.1$ to make the dataset non-separable.
\item A multi-class logistic regression problem of the form
\[
\argmin{x \in \R^{d \times k}} \sum_{i=1}^m\left[-x_{b_i}^Ta_i + \log\left(\sum_{c=1}^k\exp(x_c^Ta_i)\right)\right],
\]
see~\eqref{eq:MLR}.
We generate a $1000$ by $1000$ matrix $A$ as in the previous two cases. To generate the $b_i \in \{1,2,\dots,k\}$ (with $k = 50$), we compute $AX + E$ where the elements of the matrices $X \in \R^{d \times k}$ and $E \in \R^{m \times k}$ are sampled from a standard normal distribution. We then compute the maximum index in each row of that matrix as the class labels.
\item A label propagation problem of the form
\[
	\min_{x_i \in S' } \frac{1}{2} \sum_{i=1}^n \sum_{j=1}^n w_{ij} (x_i - x_j)^2,
\]
where $x$ is our label vector, $S$ is the set of labels that we do know (these $x_i$ are set to a sample from a normal distribution with a variance of $100$), $S'$ is the set of labels that we do not know, and $w_{ij} \ge 0$ are the weights assigned to each $x_i$ describing how strongly we want the labels $x_i$ and $x_j$ to be similar. We set the non-zero pattern of the $w_{ij}$ so that the graph forms a 50 by 50 lattice-structure (setting the non-zero values to $10000$). We labeled 100 points, leading to a problem with 2400 variables but where each variable has at most 4 neighbours in the graph.
\item Another label propagation problem  for semi-supervised learning in the `two moons' dataset~\citep{zhou2004learning}, which is a binary label propagation problem ($x_i \in [-1,1]$). We generate $2000$ samples from this dataset, randomly label 100 points in the data, and connect each node to its five nearest neighbours (using $w_{ij} = 1$). This results in a very sparse but unstructured graph.
\end{enumerate}

\subsection{Greedy Rules with Gradients Updates}
\phantomsection
\label{append:greedyGradient}

\begin{figure}[ht]
\centering
\includegraphics[width=0.95\textwidth]{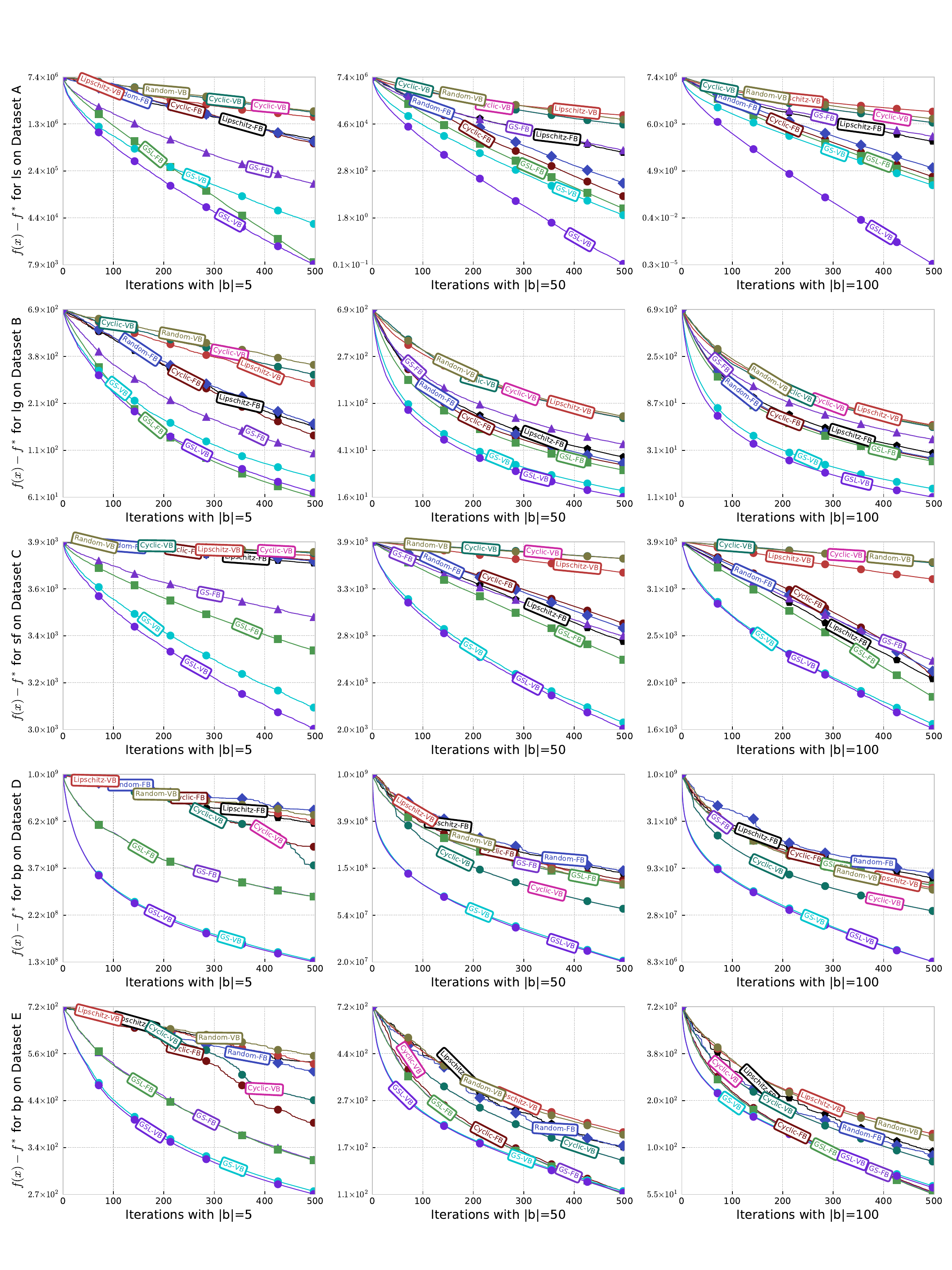}
\caption{Comparison of different random and greedy block selection rules on five different problems (rows) with three different blocks (columns) when using gradient updates.}
\phantomsection
\label{fig:exp1a}
\end{figure}

\begin{figure}[ht]
\centering
\includegraphics[width=0.95\textwidth]{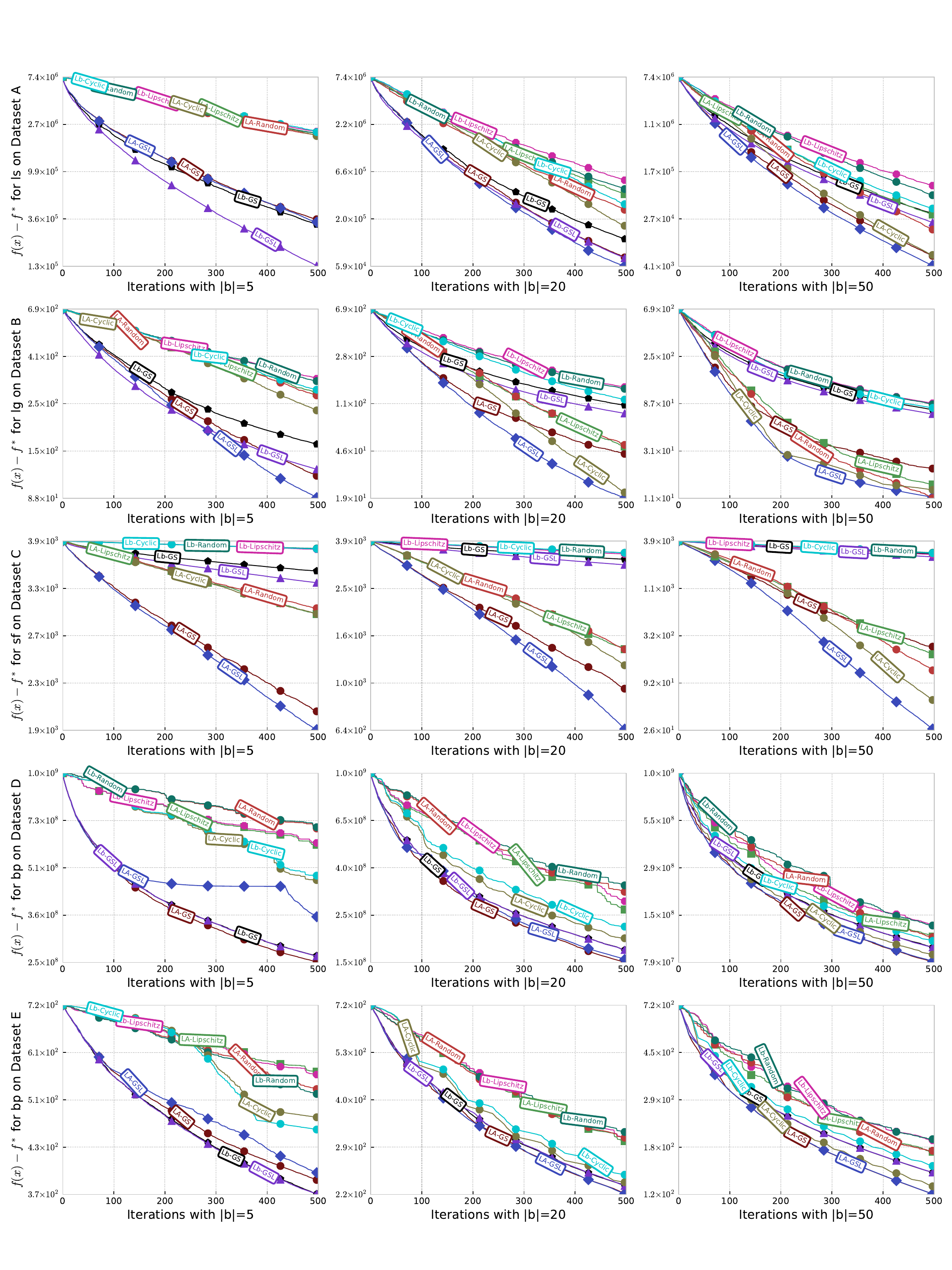}
\caption{Comparison of different random and greedy block selection rules with gradient updates and fixed blocks, using two different strategies to estimate $L_b$.}
\phantomsection
\label{fig:exp3a}
\end{figure}

In Figure~\ref{fig:exp1a} we show the performance of the different methods from Section~\ref{sec:expGrad} on all five datasets with three different block sizes. In Figure~\ref{fig:exp3a} we repeat the experiment but focusing only on the FB methods that do not incorporate the Lipschitz constant. For each FB method, we plot the performance using our upper bounds on $L_b$ as the step-size (Lb) and using the Lipschitz approximation procedure from Section~\ref{sec:LA} (LA). Here we see the LA methods improves performance
 when using large block sizes and in cases where the global $L_b$ bound is not tight. Interestingly, the cyclic and random methods seem to perform as well as greedy when using LA and a large block size (we expect this would only be true for FB).

Our third experiment also focused on the FB methods, but considered different ways to partition the variables into fixed blocks. We considered three approaches:
\begin{enumerate}
\item Order: we partition the variables based on their numerical order (which is similar to using a random order for dataset except Dataset D, where this method groups variables that adjacent in the lattice).
\item Avg: we compute the coordinate-wise Lipschitz constants $L_i$, and place the largest $L_i$ with the smallest $L_i$ values so that the average $L_i$ values are similar across the blocks.
\item Sort: we sort the $L_i$ values and place the largest values together (and the smallest values together).
\end{enumerate}
We compared many variations on cyclic/random/greedy rules with gradient or matrix updates. In the case of greedy rules with gradient updates, we found that the Sort method tended to perform the best while the Order method tended to perform the worst (see Figure~\ref{fig:exp4b}). When using matrix updates or when using cyclic/randomized rules, we found that no partitioning strategy dominated other strategies.

\begin{figure}[ht]
\centering
\includegraphics[width=0.95\textwidth]{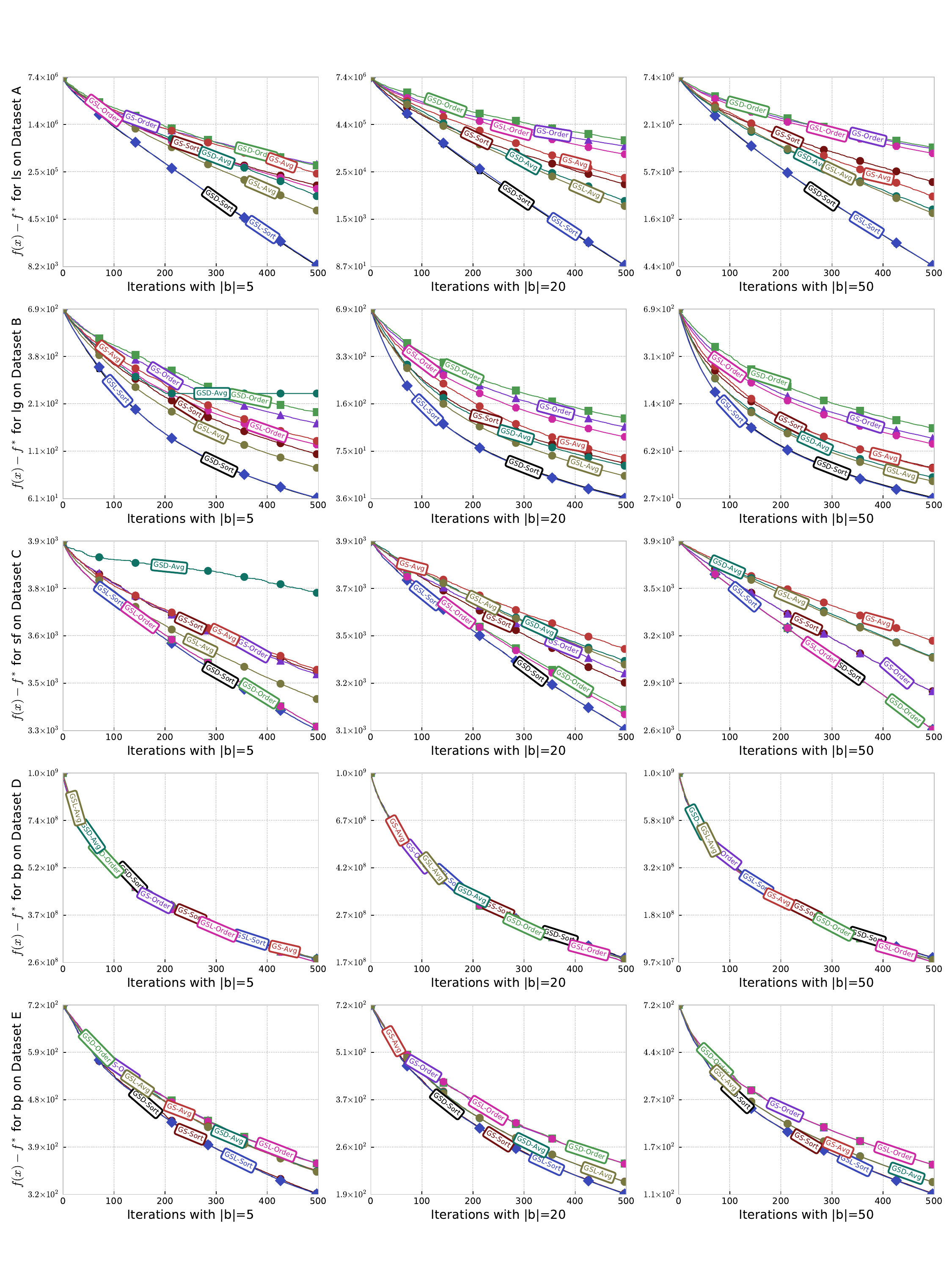}
\caption{Comparison of different random and greedy block selection rules with gradient updates and fixed blocks, using three different ways to partition the variables into blocks.}
\phantomsection
\label{fig:exp4b}
\end{figure}

\subsection{Greedy Rules with Matrix and Newton Updates}
\phantomsection
\label{append:greedyNewtonMatrix}

\begin{figure}[ht]
\centering
\includegraphics[width=0.95\textwidth]{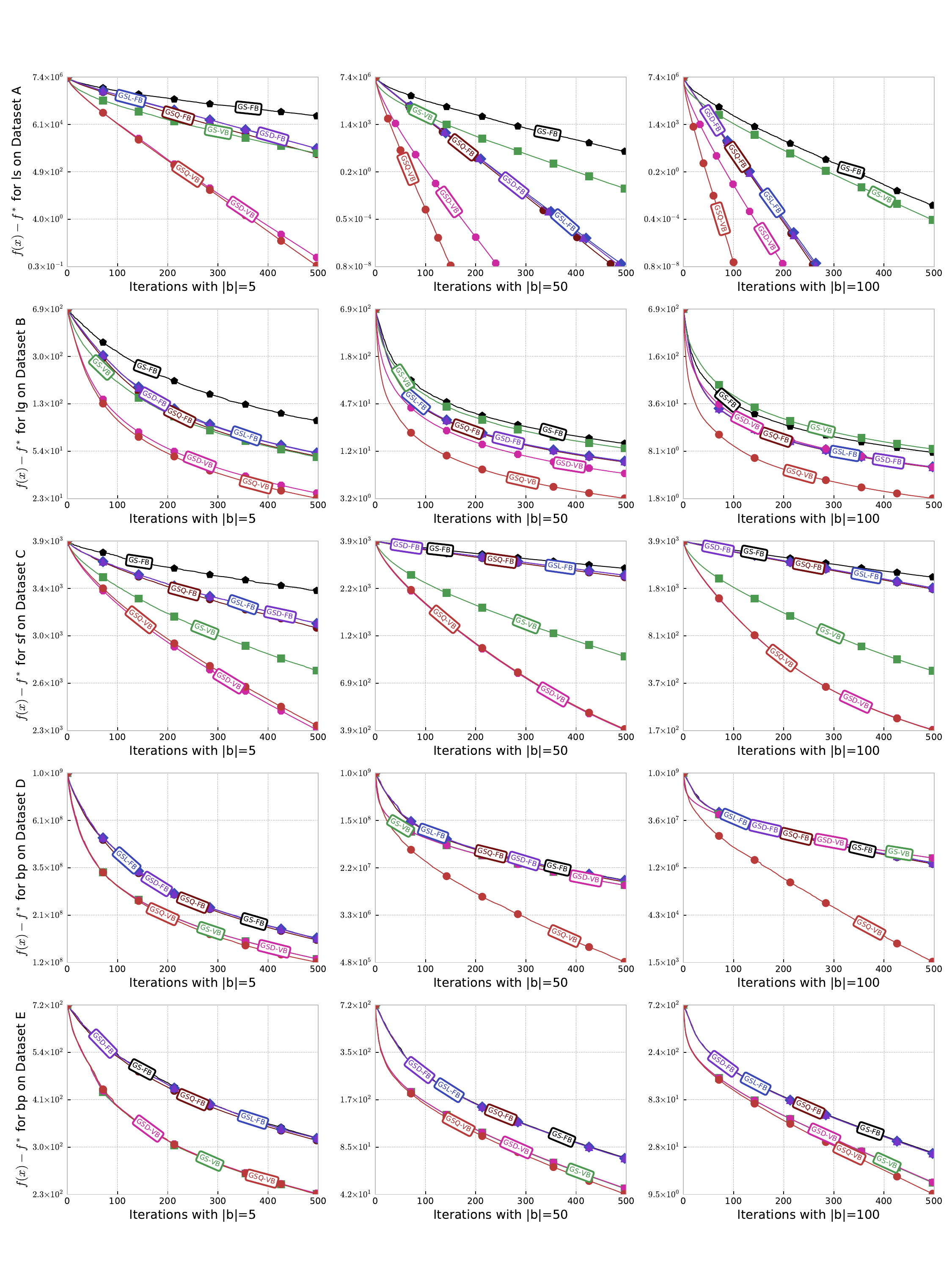}
\caption{Comparison of different  greedy block selection rules when using matrix updates.}
\phantomsection
\label{fig:exp2a}
\end{figure}

\begin{figure}[ht]
\centering
\includegraphics[width=\textwidth]{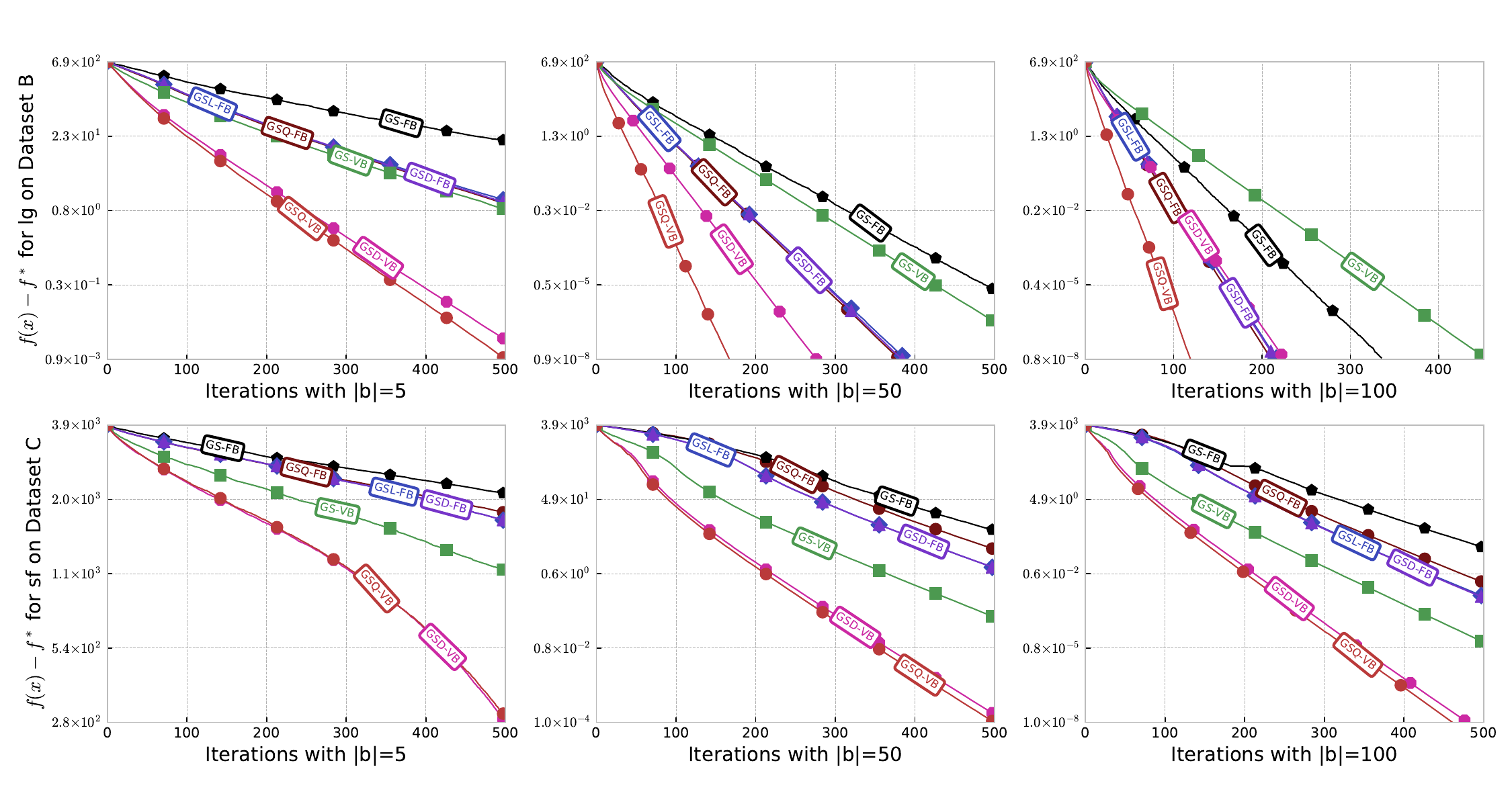}
\caption{Comparison of different  greedy block selection rules when using Newton updates and a line-search.}
\phantomsection
\label{fig:exp2LS}
\end{figure}

In Figure~\ref{fig:exp2a} we show the performance of the different methods from Section~\ref{sec:expNewt} on all five datasets with three different block sizes. In Figure~\ref{fig:exp2LS} we repeat this experiment on the two non-quadratic problems, using the Newton direction and a line-search rather than matrix updates. We see that using Newton's method significantly improves performance over matrix updates.

\subsection{Proximal Updates using Random Selection}
\phantomsection
\label{append:activeSetExp}

\begin{figure}[ht]
\centering
\includegraphics[width=\textwidth]{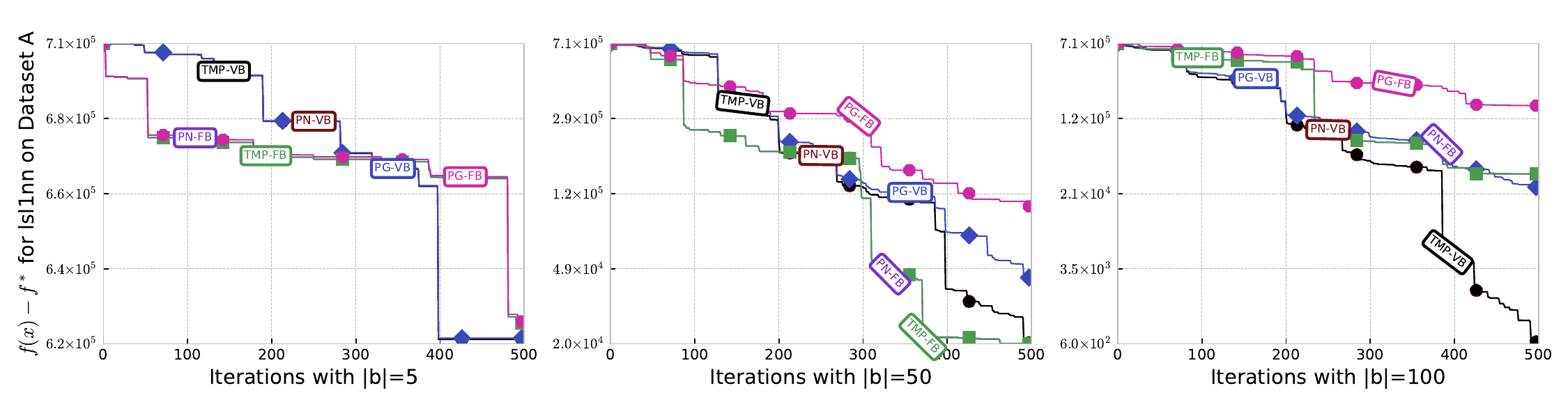}
\caption{Comparison of different updates when using random fixed and variable blocks of different sizes on a non-smooth problem.}
\phantomsection
\label{fig:exp7b}
\end{figure}

In Figure~\ref{fig:exp7a} we compare the performance of a projected gradient update with $L_b$ step-size, a projected Newton (PN) solver with line-search and the two-metric projection (TMP) update when using greedy selected fixed (FB) and variable (VB) blocks of different sizes ($|b|={5,50,100}$). In Figure~\ref{fig:exp7b} we repeat this experiment using randomly selected blocks. We see that for such a sparse problem the random block selection rules suffer from selecting variables that are already zero/active, as seen by the step-like nature of the results.


\clearpage
\vskip 0.2in
\bibliography{bib}

\end{document}